\titleformat{\section}{\large\bfseries}{\thesection}{1em}{}
\titleformat{\subsection}{\normalsize\bfseries}{\thesubsection}{1em}{}
\newcolumntype{R}[1]{>{\raggedleft\arraybackslash}p{#1}}
\newcommand{\tab}{\hspace{4 pt}~}
\newcommand{\st}{\text{s.t.} \quad}
\newcommand{\free}{\text{ (free) } }
\newcommand{\botcomma}{\; , \,}
\newcommand{\fixedx}{\mathbf{\overline{x}_i}}
\newcommand{\upfixedx}{^{\mathbf{(\overline{x}_i)}}}
\newcommand{\one}{\mathbf{1}}
\newcommand{\zero}{\mathbf{0}}
\newcommand{\upone}{^{\mathbf{(1)}}}
\newcommand{\upzero}{^{\mathbf{(0)}}}
\newcommand{\on}{^{\,\text{on}}}
\newcommand{\off}{^{\,\text{off}}}
\DeclareMathOperator*{\argmin}{\arg \min}
\newtheorem{theorem}{Theorem}
\newtheorem{definition}{Definition}
\newtheorem{lemma}{Lemma}
\newtheorem{corollary}{Corollary}
\newtheoremstyle{normal}
{10pt}
{10pt}
{\normalfont}
{}
{\bfseries}
{}
{0.8em}
{\bfseries{\thmname{#1}\thmnumber{ #2}.\thmnote{ \hspace{0.5em}(#3)\newline}}}
\theoremstyle{normal}
\newcounter{Assum}
\renewcommand\theAssum{A\arabic{Assum}}
\newcommand{\itemAss}[1]{%
\refstepcounter{Assum}%
\label{assum:#1}%
\item[\bf{\theAssum}]
}
\newcommand{\refAss}[1]{\textbf{\ref{assum:#1}}}
\title{\Large An exact solution method for \\ binary equilibrium problems with compensation \\ 
and the power market uplift problem%
\thanks{The authors would like to thank Ibrahim Abada, Benjamin~F. Hobbs, J.~David Fuller, Daniel Robinson, Carlos Ruiz, Tue Vissing Jensen, Ericson Davis as well as several anonymous reviewers for valuable comments and discussions.}}
\author{\normalsize Daniel Huppmann\textsuperscript{a,b}, Sauleh Siddiqui\textsuperscript{a,c} \\
\small huppmann@iiasa.ac.at, siddiqui@jhu.edu
}
\date{\normalsize Preprint of manuscript published in the \\
\emph{European Journal of Operational Research} \\
\footnotesize(\href{https://dx.doi.org/10.1016/j.ejor.2017.09.032}{DOI: 10.1016/j.ejor.2017.09.032})
}
\begin{document}
\maketitle
\vspace{-1.2 cm}
\begin{abstract}
We propose a novel method to find Nash equilibria in games with binary decision variables by including compensation payments and incentive-compatibility constraints from non-cooperative game theory directly into an optimization framework in lieu of using first order conditions of a linearization, or relaxation of integrality conditions. The reformulation offers a new approach to obtain and interpret dual variables to binary constraints using the benefit or loss from deviation rather than marginal relaxations. The method endogenizes the trade-off between overall (societal) efficiency and compensation payments necessary to align incentives of individual players. We provide existence results and conditions under which this problem can be solved as a mixed-binary linear program. 

We apply the solution approach to a stylized nodal power-market equilibrium problem with binary on-off decisions. This illustrative example shows that our approach yields an exact solution to the binary Nash game with compensation. We compare different implementations of actual market rules within our model, in particular constraints ensuring non-negative profits (no-loss rule) and restrictions on the compensation payments to non-dispatched generators. We discuss the resulting equilibria in terms of overall welfare, efficiency, and allocational equity. 
\end{abstract}

\bigskip \small \noindent \textbf{Keywords:} binary Nash game, non-cooperative equilibrium, compensation, \\
incentive compatibility, electricity market, power market, uplift payments \\[-4 pt]

\small \noindent \textbf{JEL Codes:} C72, C61, L13, L94 \\[-4 pt]

\small \noindent \textbf{MSC Codes:} 90C11, 90C46, 91B26

\noindent\rule{4.8 cm}{0.5 pt}
\begin{footnotesize}
\begin{itemize}
\item[\textsuperscript{a}] Department of Civil Engineering 
\& Center for Systems Science and Engineering, \\ 
The Johns Hopkins University
\item[\textsuperscript{b}] International Institute for Applied Systems Analysis (IIASA) 
\item[\textsuperscript{d}] Department of Applied Mathematics \& Statistics, The Johns Hopkins University
\end{itemize}
\end{footnotesize}

\newpage

\section{Introduction} \label{sec:introduction}
There are many 
real-world settings where several players interact in a non-cooperative game with binary decisions, such as electricity markets (on-off decision for a power plant), transportation and facility location models \citep{Caunhye_2012_emergency_logistics},  engineering \citep{Rao_1996_engineering}, as well as agriculture and land-use planning \citep{Toth_2011_agriculture}. Modelling Nash equilibria between players which face both binary and continuous decisions is a challenging problem \citep{Scarf_1990_mathprog_economics}.  Economists and game theorists usually apply brute-force methods by exploring all possible combinations and check every solution for deviation incentives of each player. When market-clearing prices to support a pure-strategy Nash equilibrium in the Walrasian sense do not exist, economists suggest to use multi-part pricing \citep{Hotelling_1938_taxation} or deviate from marginal-cost pricing to a ``second-best'' market outcome, such that no player should lose money from participating \citep{Baumol_Bradford_1970_departures_marginal_pricing}. However, a canonical approach to find pure-strategy Nash equilibria in binary games does not exist.

In many large-scale practical applications, exploring the entire solution space is not realistically possible. A common approach in such cases is to linearize the binary decisions; the Nash equilibrium can then be computed by solving the system of first-order optimality conditions, a.k.a.~equilibrium modeling using mixed complementarity problems or variational inequalities, if certain assumptions on convexity of the linearized problem hold. Recent work seeks a trade-off between relaxation of the complementarity (slackness) conditions or the integrality of discrete constraints to obtain stationary points that are presumed to be equilibria of the original problem \citep{Gabriel_2012_NETS,Gabriel_2013_DCMLCP,Fuller_2017}.

In this work, we focus on applications where a relaxation of optimality conditions or continuous relaxation of the binary decision variable (``linearization'') is either not practical or yields incorrect results. Instead, we derive first-order optimality conditions of the continuous variables for \emph{both states of each binary variable} and include those in an overall equilibrium problem simultaneously. Our method then selects the state of the binary variable and corresponding continuous variable which provides the best response for each individual player. 

Due to the nature of a binary game, there are many instances where no set of strategies and no price vector exists that supports a Nash equilibrium in pure strategies; i.e., there is no outcome where the pay-offs to each stakeholder are such that no player has a profitable deviation. This is due to the non-convexity introduced by the binary decision variables and indivisibilities \citep{ONeill_2005_nonconvexities}. 
We introduce the notion of a ``quasi-equilibrium'' to describe situations where no equilibrium exists, but where a market operator or regulator can assign compensation payments in order to obtain an incentive-compatible outcome. These payments align the incentives of individual players with the objectives of the overall system, such as cost minimization or welfare maximization. A regulator may also choose to intervene when an equilibrium exists but its outcome is inferior to the solution that a benevolent planner might achieve. That is, the market operator may seek to minimize the deviation from the system optimum (i.e., all decisions by one planner) caused by the non-cooperative game among a number of decision makers, each seeking to optimize competing objectives. Our solution approach allows to endogenously consider the trade-off between regulatory intervention to improve market efficiency, and the distortions caused by these interventions.

Electricity markets are the real-world application of binary games which have received the most attention in the mathematical optimization literature \citep{ONeill_2013_power_systems_planning,Liu_2013_tacit,Wogrin_2013_math_programming,Liu_Ferris_2013_pricing_electricity,Philpott_2013_challenges,Bjorndal_2008_EJOR_non_convex_prices,Hu_Ralph_2007,Philpott_2006_unit_commitment,ONeill_2005_nonconvexities}. A challenging problem arises from the on-off decision of power plants, which usually incur substantial start-up or shut-down costs and, if operational, face minimum-generation constraints. Because power markets are usually based on marginal-cost, short-term pricing, the  commitment costs (i.e., start-up costs) are not necessarily covered by resulting market prices. 

As a consequence, many electricity systems have rules that generators must be ``made whole'' or have to be ``in the money''; i.e., they receive  ``uplift payments'' to make sure that they do not lose money from participating in the market. This is commonly referred to as a ``no-loss rule''. However, this may not be required from a game-theoretic point of view, and thereby lead to higher-than-necessary compensation payments. At the same time, there might exist regulations that only power plants that are actually generating electricity can receive compensation -- the rationale being that it may create perverse incentives for market participants to be paid to not do something. We will discuss and illustrate in a numerical example how such market rules can actually overly restrict operational efficiency and thereby reduce welfare.

The outline of this paper is as follows: in the next section, we summarize current approaches to solve binary Nash games and place our contribution in the context of methods applied to solve such problems in the power sector. In Section~\ref{sec:model}, we propose an exact solution method to solve binary equilibrium problems. The obtained multi-objective program explicitly incorporates the trade-off between overall efficiency and compensation payments in cases where no equilibrium exists. Section~\ref{sec:example} applies our method to a power market example from the literature to illustrate its advantages and flexibility to incorporate distinct market rules regarding uplift payments. Section~\ref{sec:conclusion} concludes with a discussion on methods, other possible applications, and future work.\footnote{The Appendix provides computational results for a numerical test case using a larger data set than the stylized example in Section~\ref{sec:example}. The GAMS codes for the stylized example, the numerical test case, as well as an additional example for a resource market application with multiple binary investment decisions in production and pipeline capacity for several player are available for download at \url{https://github.com/danielhuppmann/binary_equilibrium} under a \emph{Creative Commons Attribution 4.0 International License}.}

\section{Current approaches to solve binary games} \label{sec:current}
In this section, we motivate our method by describing how current solution methods for binary games obtain equilibria, and we identify where our formulation can improve this process. While there exist brute-force methods \citep{avis2010enumeration,audet2006enumeration,von2002computing} that solve for an equilibrium considering all possible combinations of the binary variables and check ex-post for deviation incentives, we want to concentrate on mathematical programming techniques for obtaining equilibria. For large-scale applications such as those considered in this work, computational efficiency proves a hurdle in these brute-force methods. Solving a large number of equilibrium problems is not very elegant and suffers from a curse of dimensionality, because the number of equilibrium problems to be solved is~$2^k$, where~$k$ is the number of binary variables. Therefore, mathematicians and Operations Researchers are constantly looking for ways to apply advances in Variational Inequalities and Integer Programming to develop faster methods to solve such problems.

\subsection{Optimization and equilibrium modeling}
Game theory and equilibrium problems have been an integral part of the history of mathematical programming.  First-order optimality (Karush-Kuhn-Tucker, KKT) conditions, derived from each individual player's optimization problem, can be solved simultaneously by stacking them to form an equilibrium problem. Interpretations from dual variables to constraints in a game theory analysis provide essential information in equilibrium problems and are often interpreted as prices or marginal benefits for individual players \citep{Facchinei_2003_vi,Ferris_Pang_1997_SIAM,Murphy_1982_math_programming}. 

However, this relationship between optimality conditions and equilibrium problems fails once a game includes binary decision variables. The reason is that optimality conditions cannot be directly derived for binary optimization problems. Thus, applied researchers aim to solve such optimization problems in other ways. 
A method based on a trade-off between relaxing the integrality and the complementarity constraints is developed by \cite{Gabriel_2013_DCMLCP}. While relaxing integrality has been employed as a way to solve integer programs, relaxing complementarity -- essentially the optimality conditions -- was the novel idea of their contribution. 

A similar problem is tackled by \cite{Fuller_2017}; they propose a \emph{minimum disequilibrium} model, defining disequilibrium as the difference between the pay-off in the socially optimal outcome and the individually optimal decision, summed over all players. That is, they seek to minimize the aggregated opportunity costs for all market participants from following the instructions of a social planner. The authors relate the MD model both to the results obtained by a social planner and to the model proposed by \cite{Gabriel_2013_DCMLCP}.

One alternative recent method to tackle binary equilibrium problems focuses on solving integral Nash-Cournot games \citep{Todd_2014_equilibrium_in_integers} and provides an efficient algorithm to obtain equilibria. This method works very well for a specific integer game with no constraints, but the algorithm is not applicable to the broad class of binary-constrained games considered in this paper.

\subsection{Dual variables in binary programs}
As mentioned above, dual variables in constrained convex optimization contain useful information both for computational purposes and interpretation of the problem under consideration. 
However, in mathematical programs with binary or discrete constraints, the interpretation of dual variables as marginal relaxation is not valid because of the non-convex and disjoint feasible region. This is related to the difficulty of determining the value function of the original problem \citep{Guzelsoy_2007_MILP_duality}.
To overcome this caveat and obtain dual variables in such cases, the following approach is often used \citep[cf.][]{ONeill_2005_nonconvexities}. Consider the general constrained problem:
\begin{align} 
\begin{array}{ll}
\displaystyle\min_{x,y} \quad f\big(x,y\big) & \\
~ \st g\big(x,y\big) \leq 0 &
\text{, where } x \in \big\{0,1\big\}^n, ~y \in \mathbb{R}^m
\end{array} \label{prob:originalMIP}
\end{align}

To obtain dual variables to the constraints~$g(x,y)$, this problem is commonly solved in a two-step procedure: first, the original problem~\eqref{prob:originalMIP} is solved using integer programming techniques; then, the binary variables~$x$ are linearized, i.e., the original problem is replaced by the following:
\begin{align} 
\begin{array}{ll}
\displaystyle\min_{x,y} \quad f\big(x,y\big) & \\
~ \st g\big(x,y\big) \leq 0 &
\text{, where } x \in \big[0,1\big]^n \subset \mathbb{R}_+^n, ~y \in \mathbb{R}^m
\end{array} \tag{\ref{prob:originalMIP}\textsuperscript{linear}}
\end{align}

Finally, constraints are added to fix these variables at the level determined to be optimal,~$x^*$, in the first step:
\begin{align}
\min_{x,y} \quad f\big(x,y\big)& \nonumber \\
\st g\big(x,y\big)& \leq 0 \quad \ \, (\lambda) \label{prob:relaxedfixed} \\
x &= x^* \quad (\mu) \nonumber \quad
\text{, where } \big(x,y\big) \in \mathbb{R}^{n+m} \nonumber
\end{align}
Solving the reformulated problem~\eqref{prob:relaxedfixed} allows to interpret the dual variables~$(\lambda,\mu)$ in the sense of multipliers or shadow values; offering these prices as contracts to market participants yields a Nash equilibrium. The dual variables~$\mu$ are not part of the original problem, they are obtained from the linearization and can be thought of as a ``price [\dots] representing the integral activity for (each) agent'' \citep[p.~279]{ONeill_2005_nonconvexities}.

These duals are also important for integer programs, so that most numerical solvers automatically report these values when solving mixed-integer programs. However, one must be careful when using this approach in practical applications, as these duals cannot be readily interpreted as marginal relaxations of the original binary model -- that is, the marginal value~$\lambda$ of the linearized fixed program cannot be interpreted as dual to the constraint of the original, mixed-integer program (problem~\ref{prob:originalMIP}). This is, however, what many power markets are currently doing in practice: they use the dual variable to the energy balance constraint as locational marginal price and clear the market based on these pay-offs. The dual prices of the binary activities~$\mu$ are neglected. Instead, market operators assign compensation payments to make whole individual generators after the fact.

\subsection{Uplifts, compensation, and equilibria in power markets}\label{sec:current:expostcompensation}
There already exists a substantial breadth of Operations Research literature with regard to electricity markets and pricing in non-convex problems, and binary games are a prevalent concern in this area. The current practice in many centrally dispatched power markets is that, first, the welfare-optimal dispatch is computed by the Independent System Operator (ISO) and locational marginal prices (LMP) in the network are determined using the two-step approach outlined above. Compensation to individual players are then calculated after market-clearance to ensure that no market participant incurs financial losses based on these prices. These are often called \emph{uplift}, \emph{make-whole payments} or \emph{bid cost recovery}, though actual implementations and rules differ across markets.

System operators usually have non-confiscatory compensation rules \citep{Sioshansi_2014_central_commitment}. This means that they do not assign penalties for deviation, but only disburse positive compensation payments. In that respect, current market operation deviates from contracts \emph{T} proposed by \cite{ONeill_2005_nonconvexities}, which are derived from all duals~$(\lambda,\mu)$. Instead, standard compensation payments are based on the pay-offs from LMPs (the dual variable or vector $\lambda$ only, in particular the duals to the nodal energy balance constraint). It is important to note that these two are not equivalent.

This approach does not actually guarantee that the incentives of all players are aligned in the resulting market outcome, because the nature of the non-cooperative binary game between market participants is side-stepped. Generators that are not dispatched by the ISO may have an incentive to enter the market, if they earned positive profits given observed market prices, or to deviate from the announced schedule. Some markets allow \emph{self-scheduling}, which gives generators the option to determine their dispatch individually rather than surrendering their generation decision to the ISO \citep[cf.][]{Sioshansi_2010_self-commitment}.

An alternative to the current approach is the minimum uplift or convex hull pricing method, which relies on a convex approximation of the lower bound of the aggregate cost function to derive prices and the minimal uplifts to support the market outcome \citep{Schiro_2015_convexhull,Gribik_2007_price_uplift,Hogan_2003_minimum_uplift}. This method acknowledges that compensation is required to deter generators from following profitable deviations from the dispatch chosen by the ISO. Alas, using the convex hull relaxes the integrality of the underlying problem, and therefore also does not solve for the exact solution to the non-cooperative game between generators.

An important problem of the two-step approach arises from the fact that the budget for necessary compensation payments is not considered when determining the dispatch, but only computed ex-post. This neglects the potential trade-off between efficient market operation and minimizing the budget required for compensation payments, which is usually funded from fees or levies on market participants. These fees may in turn cause distortions in the market.  It is easy to conceive of situations where accepting a slight reduction in market efficiency (i.e., lower welfare, higher costs for dispatch) allows to significantly reduce the compensation payments required. The illustrative example in Section~\ref{sec:example} shows just such a situation.

The method developed in this work tackles these caveats of current approaches and proposes an exact solution method for games in binary variables. Our method offers an important practical advantage: it allows to directly balance efficient market operation based on an exact method for finding solutions to binary equilibrium problems, on the one hand, with the amount of compensation payments to ensure that these outcomes are stable against deviation by individual players, on the other.

\subsection{Marginal relaxation vs.~the loss from a binary deviation}\label{sec:current:switchvalue}
There is a further caveat of using the duals of problem~\eqref{prob:relaxedfixed} for algorithms and (economic) interpretation of results: this approach introduces the dual (vector)~$\mu$ as the marginal relaxation of the constraint that fixes~$x$ at its optimal value. However, it is more appropriate to ask not about a marginal relaxation, but a switch from one possible value of the binary variable to the other. 

We introduce the ``switch value''~$\kappa$ as the benefit or loss incurred by switching from one solution to the binary problem~$f(x^\circ,y^\circ)$ to the optimal value of the objective function given that the binary variable takes the other value,~$x^\times=1-x^\circ$. Here,~$y^\circ$ is chosen so as to minimize $f(x^\circ,y)$, i.e., $y^\circ=\argmin_y f(x^\circ,y)$, and~$y^\times$ is determined equivalently.

Then,~$\kappa$ can be determined by computing:
\begin{align*}
\kappa = - f\big(x^\circ,y^\circ\big) + f\big(x^\times,y^\times\big) .
\end{align*}
If $\kappa$ is strictly positive, switching in the binary variable from $x^\circ$ to $x^\times$ incurs a loss of $\kappa$; hence, $x^\circ$ is the optimal decision. If $\kappa=0$, the objective values are identical and the player is indifferent between the two options.

When~$x \in \{0,1\}^n$ is a binary vector rather than a one-dimensional variable, the switch value can be computed by comparing the objective value for a possible realization $x^\circ$ to the outcome for all other permutations $\mathfrak{S}(\{0,1\}^n)$ of the binary vector and choosing the most beneficial (minimal) alternative:
\begin{align*}
\kappa(x^\circ) = - f\big(x^\circ,y^\circ\big) 
+ \min_{x^\times \in \mathfrak{S}(\{0,1\}^n) \setminus x^\circ} f\big(x^\times,y^\times\big) 
\end{align*}
As before, if $\kappa$ is strictly positive, this implies that $x^\circ$ is optimal, and $\kappa=0$ means that there is (at least) one alternative in the binary decision vector with the same objective value.

This formulation still requires comparing the objective values of $2^n$~alternatives and solving for the optimal level of the continuous variables~$y$ in each case. Hence, this approach may not seem like an improvement. The big advantage will become apparent in settings where multiple players~$i \in I = \left\{1,\dots,p\right\}$ interact and one solves for an equilibrium between them. A brute-force approach would require to solve all permutation across players and their options in binary variables ($2^{pn}$). Building on the approach identified above, this can be transformed to a multi-objective optimization problem with $p\,2^n$ options. We will discuss the analytical properties in subsequent sections and present a numerical analysis using a larger-scale dataset in the Appendix.

In the method proposed below, we use this notion of a switch value~$\kappa$ to choose between equilibria in such games with binary decisions. This variable also serves as a selection mechanism in such cases where no binary equilibrium exists; it can then be used as a solution strategy to find an appropriate quasi-equilibrium. This approach holds promise with regard to algorithmic advances of binary and integer programming, as well as allow a better representation of real-world problems in economics, engineering, and beyond. 

\section{An exact solution for binary equilibrium problems} \label{sec:model}
We now turn to our exact solution method to solve an equilibrium problem with binary variables. 
The core idea for our approach is as follows: for each player, we derive the first-order optimality conditions with respect to the continuous decision variables for each state of the binary variable. In addition, we formulate an explicit incentive-compatibility constraint to ensure that each player chooses the state of the binary variable that is most beneficial to her. 

For ease of notation and formulating a concise and simple exposition of our approach, we drop the index on the binary variable and describe the method in the case where each player has exactly one binary decision variable, while the number of continuous decision variables and constraints is arbitrary. Nevertheless, the approach works for any problem with a finite number of binary decision variables. To illustrate this feature, the electricity market example presented in the following section has multiple binary decision variables per player.

The game is defined by a set of players $i \in I = \left\{1,\dots,p\right\}$, where each player seeks to minimize an objective function $f_i(\cdot)$. In the following formulation, each player has a (vector of) continuous decision variable(s) $y_i \in \mathbb{R}^m$, binary decision variable $x_i \in \{0,1\} $ and a set of $k$~constraints~$g_i:\mathbb{R}^{m} \times \{0,1\}\rightarrow \mathbb{R}^k$ with a vector of length~$k$ of associated dual variables~$\lambda_i$. As elaborated in the previous section, these dual variables are only meaningful for a fixed~$x_i$. The feasible region of each player is denoted by~$\mathcal{K}_i=\big\{(x_i, y_i)\;|\;g_i(x_i,y_i) \leq 0\big\}$.

Each player's optimization problem reads as follows:
\begin{subequations} \label{model:individual_optimization_problem}
	\begin{align}
	\min_{x_i \in \{0,1\},y_i \in \mathbb{R}^m}\quad &f_i\big(x_i,y_i,y_{-i}(x_{-i})\big) \\
	\st & \ g_i\big(x_i,y_i\big) \leq 0 \quad (\lambda_i) 
	\end{align}
\end{subequations}
The vector~\smash{$y_{-i}=(y_{j})_{{j} \in I \setminus \{i\}}$} is the collection of all rivals' decisions in continuous variables, and thus is of dimension~$m \times (p-1)$. The set of feasible strategies by the rivals is~\smash{$\mathcal{K}_{-i}=\prod_{j \in I \setminus \{i\}} \big(y_{j}(x_{j})\big)$}. Because the continuous variables of the rivals' depend on their binary decisions,~$\mathcal{K}_{-i}$ is usually pairwise disjoint and non-convex. The formulation implicitly assumes that each player's pay-off is only affected by the continuous decision variables of her rivals, but not directly affected by their binary variables. This is a simplification only for notational convenience and can easily be relaxed.

A Nash equilibrium to this game is a set of strategies such that each player chooses an optimal strategy given the action by the rivals. This is equivalent to the notion that no player has an incentive to unilaterally change her decision upon observing the decisions of the rivals; there exists no profitable deviation. This is formally defined below; we distinguish between deviation incentives in the binary and the continuous variables to facilitate the exposition.

\begin{spacing}{1.1}
	\begin{definition}[Nash equilibrium in a binary game]\label{definition:BNE}
		We define the binary game as a set of players~$i \in I$, each seeking to solve an optimization problem as given by problem~\eqref{model:individual_optimization_problem}.  A Nash equilibrium to this game is a vector~$\big( (x_i^*, y_i^*) \in \mathcal{K}_i \big)_{i \in I}$ such that~$y_i^*$ is the optimal decision (i.e., best response) by player~$i$ given~$x_i^*$ and~$y_{-i}^*(x_{-i}^*)$,
		\begin{align}
		f_i\big(x_i^*,y_i^*,y_{-i}^*(x_{-i}^*)\big) \leq f_i\big((x_i^*,y_i,y_{-i}^*(x_{-i}^*)\big) \ \forall~y_i \in \big\{y_i\;|\;g_i\big(x_i^*,y_i\big) \leq 0  \big\} \ \forall~i \in I
		\end{align}
		and such that there is no profitable deviation with regard to the binary variable,
		\begin{align}
		f_i\big(x_i^*,y_i^*,y_{-i}^*(x_{-i}^*)\big) \leq f_i\big(x_i^\times,y_i^\times,y_{-i}^*(x_{-i}^*)\big) \quad \forall~i \in I,
		\end{align}
		where~$x_i^\times$ is the alternative value of~$x_i$, i.e.,~$x_i^\times=1-x_i^*$, and~$y_i^\times$ is a best response of player~$i$ under the assumption that $x_i=x_i^\times$, i.e.,
		\begin{align}
		f_i\big((x_i^\times,y_i^\times,y_{-i}^*(x_{-i}^*)\big) \leq &~ f_i\big((x_i^\times,y_i,y_{-i}^*(x_{-i}^*)\big)  \nonumber \\
		& \quad \forall~y_i \in \Big\{y_i\;|\;g_i\big(x_i^\times,y_i\big) \leq 0 \Big\} \quad \forall~i \in I.
		\end{align}
	\end{definition}
\end{spacing}

Because existence or uniqueness of equilibria cannot be guaranteed in binary games, we need to devise a method to select among several outcomes, or to arrive at a desired point which is ``almost'' an equilibrium. For this purpose, we introduce a \emph{market operator}, as a coordination agent and equilibrium selection mechanism. This entity is modeled as the upper-level player within a hierarchical, two-stage setup, where the lower-level constraints represent the binary equilibrium problem. She guides the players towards a desirable outcome and assigns compensation payments if necessary.

We formally introduce the term \emph{quasi-equilibrium} for solutions to the binary game that are not Nash equilibria according to Definition~\ref{definition:BNE}, but where incentive-compatibility can be ensured with appropriate compensation payments.

\begin{spacing}{1.1}
	\begin{definition}[Quasi-equilibrium in a binary game with compensation]\label{definition:BNE:compensation}
		\hfill \newline
		We define the binary game with compensation as a set of players~$i \in I$, each seeking to solve an optimization problem as given by problem~\eqref{model:individual_optimization_problem}.  A binary quasi-equilibrium to this game is a vector~$\big( (x_i^*, y_i^*) \in \mathcal{K}_i \big)_{i \in I}$ and a compensation vector $\big(\zeta_i\in \mathbb{R}_+ \big)_{i \in I}$ such that for each player:
		\begin{enumerate}
			\item $y_i^*$ is the optimal feasible decision (i.e., best response) by player~$i$ given~$x_i^*$ and~$y_{-i}^*(x_{-i}^*)$,
			\begin{align}
			f_i\big(x_i^*,y_i^*,y_{-i}^*(x_{-i}^*)\big) \leq &~ f_i\big(x_i^*,y_i,y_{-i}^*(x_{-i}^*)\big) \nonumber \\ 
			&\quad \forall~y_i \in \Big\{y_i\;|\;g_i\big(x_i^*,y_i\big) \leq 0  \Big\} \quad \forall~i \in I,
			\end{align}
			\item no player can improve her own pay-off by deviating from $x_i^*$ by more than the compensation payment~$\zeta_i$; i.e., the compensation is at least as great as the benefit from deviation with regard to the binary variable. Hence, there is no profitable deviation with regard to the binary variable given the compensation payment,
			\begin{align}
			f_i\big(x_i^*,y_i^*,y_{-i}^*(x_{-i}^*)\big) - \zeta_i \leq f_i\big(x_i^\times,y_i^\times,y_{-i}^*(x_{-i}^*)\big) \quad \forall~i \in I \label{definition:BNE:incentive:compatibility}
			\end{align}
			where~$x_i^\times$ and~$y_i^\times$ are defined as in Definition~\ref{definition:BNE},
			\item and the compensation payments are minimal, i.e., if a compensation payment is required for a player, then the incentive-compatibility condition~\eqref{definition:BNE:incentive:compatibility} holds with equality. That is,
			\begin{align}
			\zeta_i = ~\min_{\widetilde{\zeta_i} \in \mathbb{R}_+} \quad &\widetilde{\zeta_i}  \nonumber \\ 
			\emph{\st} & f_i\big(x_i^*,y_i^*,y_{-i}^*(x_{-i}^*)\big) - \widetilde{\zeta_i} \leq f_i\big(x_i^\times,y_i^\times,y_{-i}^*(x_{-i}^*)\big) \quad \forall~i \in I.
			\end{align}
		\end{enumerate}
	\end{definition}
\end{spacing}
Note that when $\sum\nolimits_{i \in I} \zeta_i =0$, the binary quasi-equilibrium is also a Nash equilibrium in a game without compensation. In the definition of the quasi-equilibrium, we directly incorporate the notion that the compensation payments should be minimal. This is helpful because it eliminates those incentive-compatible solutions where the market operator ``over-compensates'' some players, and it allows to focus on a smaller set of candidate solutions.
\footnote{For games where the individual players' optimization problems are non-convex with continuous variables, \cite{Pang_2013_quasi_Nash} introduce the notion of a ``quasi-Nash equilibrium'' to describe solutions that are stationary points derived from relaxed constraint qualifications. 
In the definition used here, we are looking at a distinct concept of an equilibrium. 
}

\subsection{Determining each player's best response}
In the definitions above, we have simply stated that the continuous decision variables~$y_i^*$ are optimal for player~$i$ given the binary variable and the rivals' decisions. In order to efficiently compute this best response of each player, we use first-order optimality conditions with regard to the continuous decision variables. Hence, we need to make sure these conditions are necessary and sufficient so that we can capture the entire equilibrium set. An assumption on compactness is also needed for the selection of certain parameters of our method.
\begin{itemize}
	\itemAss{KKT:conditions} Assume that for each player~$i \in I$, problem~\eqref{model:individual_optimization_problem} is such that the first-order optimality (KKT) conditions are necessary and sufficient with respect to the variables~$y_i$, and the feasible region defined by the constraints $g_i\big(x_i,y_i\big)$ is compact and non-empty, for a fixed realization of~$x_i$ and for any fixed feasible strategy by the rivals~$y_{-i} \in \mathcal{K}_{-i}$
	.
\end{itemize}
As an example, the KKT conditions are necessary and sufficient for problem~\eqref{model:individual_optimization_problem} if~$f_i(\fixedx,\cdot,y_{-i})$ are convex and~$g_i(\fixedx,\cdot )$ affine for any fixed value~$\fixedx \in \{0,1\}$ and any fixed vector~$y_{-i} \in \mathcal{K}_{-i}$.

Let the vector~$(x_i^*,y_i^*)$ denote the best response for each player within the overall problem, given the decision vector $\big(y_{-i}(x_{-i})\big)_{i\in I}$ by all rivals, and let~\smash{$\widetilde{y}_i\upfixedx$} denote the best response of player~$i$ for a fixed~$x_i=\fixedx \in \{0,1\}$. Then, the objective value~\smash{$f_i\big(\fixedx,\widetilde{y}_i\upfixedx,y_{-i}(x_{-i})\big)$} is the best pay-off that a player can do given~$\fixedx$ and the rivals' strategies.

Under Assumption~\refAss{KKT:conditions}, if the value of $x_i$ is fixed at $\fixedx$, the best response $\widetilde{y}_i\upfixedx$ can be found by solving the respective first-order optimality conditions:
\begin{subequations} \label{model:FOC}
	\begin{align}
	0 = \nabla_{y_i} \,  f_i\big(\fixedx,\widetilde{y}_i\upfixedx,y_{-i}(x_{-i})\big) + \widetilde{\lambda}_i\upfixedx \, \nabla_{y_i} \, g_i\big(\fixedx,\widetilde{y}_i\upfixedx\big)
	\quad &\botcomma \quad \widetilde{y}_i\upfixedx \free \\
	0 \geq g_i\big(\fixedx,\widetilde{y}_i\upfixedx\big) \quad &\bot \quad \widetilde{\lambda}_i\upfixedx \geq 0
	\end{align}
\end{subequations}

Player $i$ will choose the binary variable $x_i$ such that its objective value is minimal given the decisions of the rivals $y_{-i}(x_{-i})$. Mathematically, the best response of player~$i$ regarding her binary variable~$x_i$ can be written as follows:
\begin{subequations} \label{model:incentive:compatibility} 
	\begin{align}
	f_i\big(\one,\widetilde{y}_i\upone,y_{-i}(x_{-i})\big) < f_i\big(\zero,\widetilde{y}_i\upzero,y_{-i}(x_{-i})\big) \quad & \Rightarrow \quad x_i^* = 1 \\
	f_i\big(\one,\widetilde{y}_i\upone,y_{-i}(x_{-i})\big) > f_i\big(\zero,\widetilde{y}_i\upzero,y_{-i}(x_{-i})\big) \quad & \Rightarrow \quad x_i^* = 0 \\
	f_i\big(\one,\widetilde{y}_i\upone,y_{-i}(x_{-i})\big) = f_i\big(\zero,\widetilde{y}_i\upzero,y_{-i}(x_{-i})\big) \quad & \Rightarrow \quad  x_i^* = \{0,1\}
	\end{align}
\end{subequations}
The logic of conditions~\eqref{model:incentive:compatibility} is similar to the notion of incentive compatibility in game theory, i.e., there exists no profitable deviation given the decisions of all rivals. 
Hence, a vector~$\big(x_i^*,y_i^*(x_i^*)\big)_{i \in I}$ that satisfies the incentive-compatibility constraints in Definition~\ref{definition:BNE} for each player constitutes a Nash equilibrium. If the incentive-compatibility condition is not satisfied for any feasible strategy, it may be necessary to financially compensate a player to ensure that she doesn't deviate, as stated in  Definition~\ref{definition:BNE:compensation}. 

A direct implementation of the implicit ``if-then''-logic requires additional binary variables and thereby considerably increases numerical complexity. We overcome this drawback by proposing a mathematically equivalent formulation 
using the original binary variables of the players. The resulting overall program will be shown in problem~\eqref{model:overall:problem}; but first, we will discuss the reformulation and introduce the equilibrium selection mechanism in more detail.

\subsection{An efficient formulation of incentive compatibility}
We introduce four non-negative variables $\big(\kappa_i\upone,\kappa_i\upzero,\zeta_i\upone,\zeta_i\upzero\big)$ for each player, and a sufficiently large scalar (or vector of scalars)~\smash{$\widetilde{K}$}. 
The vector~$\kappa_i\upfixedx$ is the switch value introduced in Section~\ref{sec:current:switchvalue}; it can be interpreted as the loss the player would incur by switching from her optimal value of the binary variable to the alternative.
The vector~$\zeta_i\upfixedx$ denotes compensation payments to guarantee incentive compatibility; in cases where the market operator requires a player to act against her own objectives, this payment ensures that the player does not have a profitable deviation.

The scalar $\widetilde{K}$ must be large enough so that it does not inadvertently constrain the variables $\big(\kappa_i\upone,\kappa_i\upzero,\zeta_i\upone,\zeta_i\upzero\big)$. Since these are differences in objective function values, this implies that $\widetilde{K}$ must be larger than the size of the range of $f_i(\cdot)$. By Assumption~\refAss{KKT:conditions}, each $f_i(\cdot)$ is continuous over a compact feasible region, thus achieves both its maximum and minimum within the feasible region. An efficient technique to choose $\widetilde{K}$ is to linearize the binary variables in the individual optimization problems and minimize and maximize over $f_i(\cdot)$ to find the largest difference possible. Note that the role of $\widetilde{K}$ here and in the subsequent sections is to enforce the disjunction between two choices of the binary variable $x_i$. It is the \emph{disjunctive constraints} formulation introduced by \cite{Fortuny-Amat_1981}.

The vectors~$\kappa_i\upfixedx$ and~$\zeta_i\upfixedx$ are not dual variables in the original sense, but they do contain similar information regarding the solution. Hence, they are analogous in interpretation to a dual -- but in terms of a binary deviation, not in the sense of a marginal relaxation. Alas, the term ``shadow price'' often used in economics as synonymous for dual variables could also be applied here.

We can now replace the incentive compatibility conditions (equations~\ref{model:incentive:compatibility}) by a more efficient formulation:
\begin{subequations} \label{model:incentive:compatibility:reformulated} 
	\begin{align}
	f_i\big(\one,\widetilde{y}_i\upone,y_{-i}\big) + \kappa_i\upone - \zeta_i\upone - \kappa_i\upzero + \zeta_i\upzero &= f_i\big(\zero,\widetilde{y}_i\upzero,y_{-i}\big) \label{model:incentive:compatibility:reformulated:1} \\
	\kappa_i\upone + \zeta_i\upone&\leq  x_i \, \widetilde{K} \\[2 pt]
	\kappa_i\upzero + \zeta_i\upzero&\leq \big(1-x_i\big) \, \widetilde{K} \\[2 pt]
	\kappa_i\upone, \kappa_i\upzero, \zeta_i\upone, \zeta_i\upzero &\in \mathbb{R}_+ \nonumber
	\end{align}
\end{subequations}

The market operator selects a solution such that the first-order optimality conditions and the incentive-compatibility constraints are satisfied for all players. In line with the definition of the quasi-equilibrium as the minimal compensation payment for each player~$i$, the variables~$\kappa_i\upfixedx$ and~$\zeta_i\upfixedx$ cannot both be strictly greater than zero at a solution; this will be shown after we formally introduce the market operator.

\begin{table}[b]
	\begin{center}
		\begin{small}
			\begin{tabular}{c|cc|cccc|c} \label{table:equilibrium:cases}
				& individually  &  equilibrium $x_i$ chosen & & & & & incentives \\
				case & optimal $x_i$ &  by market operator & $\kappa_i\upone$&$\kappa_i\upzero$&$\zeta_i\upone$&$\zeta_i\upzero$ & aligned \\ 
				\hline \hline 
				I & $\one$ & $\one$ & $>0$ & $0$ & $0$ & $0$ & yes \\
				II & $\zero$ & $\zero$ & $0$ & $>0$ & $0$ & $0$ & yes \\
				III & $\zero$ & $\one$ & $0$ & $0$ & $>0$ & $0$ & no \\
				IV & $\one$ & $\zero$ & $0$ & $0$ & $0$ & $>0$ &  no \\
				V  & indifferent & $\one$ / $\zero$ & $0$ & $0$ & $0$ & $0$ &  yes
			\end{tabular}
		\end{small}
	\end{center}
	\caption{Incentive alignment between a player's individually optimal decision (her best response) and the quasi-equilibrium chosen by the market operator} \label{table:incentive_alignment}
\end{table}

Let us illustrate and discuss the interpretation of the variables~$\kappa_i\upfixedx$ and~\smash{$\zeta_i\upfixedx$} in more detail.
The question is whether the solution for the overall equilibrium problem chosen by the market operator is aligned with the best response of each player. By this, we mean whether a player's individually optimal decision coincides with the quasi-equilibrium chosen by the market operator.
There are five possible outcomes regarding the incentive alignment of an individual player and the market operator; the cases are illustrated in Table~\ref{table:incentive_alignment}. In cases~I and~II, the incentives are aligned, as the player would incur losses (a strictly worse pay-off) by deviating from the outcome decided by the market operator. The respective switch value~$\kappa_i\upfixedx$ is strictly positive. 
In cases~III and~IV, the solution chosen by the market operator is not in line with the player's individual best response; only by disbursing compensation payments can the market operator convince the player not to deviate to the individually optimal decision. As a consequence, the respective compensation payment~$\zeta_i\upfixedx$ is strictly positive, and a quasi-equilibrium is realized. 
In the last case (no.~V), the player is indifferent between her options, so the market operator is not restricted in selecting either outcome. The player does not have a positive switch value in either direction, and no compensation is required.

\subsection{Translating each player's best response into the overall game}
From equations \eqref{model:FOC}, we have obtained two optimal decision vectors,~$\widetilde{y}_i\upfixedx$, for each player for both values that the variable~$\fixedx$ can take. We now need to translate which of these two decision variables is realized in the quasi-equilibrium and ``seen'' by the rivals in their own optimization problem:
\begin{subequations} \label{model:translation:y}
	\begin{align}
	\widetilde{y}_i\upzero - x_i \, \widetilde{K} \leq 
	y_i &\leq \widetilde{y}_i\upzero + x_i \, \widetilde{K} \\
	\widetilde{y}_i\upone - \big( 1 - x_i \big) \, \widetilde{K} \leq
	y_i &\leq \widetilde{y}_i\upone + \big( 1 - x_i \big) \, \widetilde{K}
	\end{align}
\end{subequations}
The logic of constraints \eqref{model:translation:y} is straightforward: the decision vector~$y_i$, as it is considered by the rivals and the market operator in their optimization problems, must be equal to the optimal decision~$\widetilde{y}_i\upfixedx$ for whichever value of $x_i$ is the solution in the quasi-equilibrium, 
i.e., $x_i=0\Rightarrow y_i = \widetilde{y}_i\upzero$ and $x_i=1\Rightarrow y_i = \widetilde{y}_i\upone$. The parameter~$\widetilde{K}$ must be chosen suitably large so as not to constrain the continuous decision variable(s). This implies that $\widetilde{K}$ must be larger than the size of the domain of $y_i$. As argued before, each $y_i$ is continuous and, by Assumption~\refAss{KKT:conditions}, over a compact feasible region. A suitable value for $\widetilde{K}$ can be found by linearizing the binary variables in the individual optimization problems and minimize and maximize over $y_i$ to find the largest difference possible. 

We can now combine the incentive-compatibility constraints~\eqref{model:incentive:compatibility:reformulated} with the equilibrium conditions~\eqref{model:FOC} for the continuous decision variables into one set of constraints. The non-linearity of the complementarity conditions \eqref{model:FOC} can be readily reformulated applying disjunctive constraints \citep{Fortuny-Amat_1981} or using SOS type~1 variables \citep{Siddiqui_2013_SOS1}. 

\subsection{A multi-objective program subject to binary quasi-equilibria}\label{sec:model:mopbqe}
So far, we have only replaced a number of equilibrium problems (for each possible combination of binary variables) by a set of integer constraints that exactly represent all binary quasi-equilibria. Next, we can apply multi-objective programming to direct the game towards desired solutions. To this end, we introduce the market operator, and we assume that she seeks to minimize an objective function consisting of two terms: a function~$F(\cdot)$, which only depends on the actual market outcome (efficiency of the solution; cost-minimization or welfare-maximization may be a natural choice for this term) and a function~$G(\cdot)$, which serves as a regularizer. Parameterizing this function appropriately allows to weight between the different terms; in economic applications, it can be interpreted as a penalty term that seeks to minimize the compensation payments required to ensure incentive compatibility of the market solution.

The overall problem is a mathematical program subject to a binary equilibrium problem, where $\fixedx= \{0,1\}$ are the binary variables in the lower-level problem.
\begin{subequations} \label{model:overall:problem}
	\begin{align}
	\min_{
		\substack{x_i,y_i,
			\widetilde{y}\upfixedx_i,\widetilde{\lambda}\upfixedx_i \\  \kappa\upfixedx_i,\zeta\upfixedx_i}}
	\quad F\Big(\big(x_i,y_i\big)_{i \in I}\Big) + G\Big(\big(\zeta\upfixedx_i\big)_{i \in I}\Big) \hspace{4 cm} \label{model:overall:problem:objective}
	\end{align}
	\vspace{-0.5 cm}\begin{align}
	%
	\st \nabla_{y_i} \, f_i\Big(\one,\widetilde{y}_i\upone,y_{-i}\Big) + \big(\widetilde{\lambda}_i\upone\big)^T \nabla_{y_i} \, g_i\Big(\one,\widetilde{y}_i\upone\Big) &= 0 \label{model:overall:problem:KKT:1}\\
	0 \leq - g_i\Big(\one,\widetilde{y}_i\upone \Big)& \ \bot \ \widetilde{\lambda}_i\upone \geq 0 \label{model:overall:problem:KKT:constraints:1} \\
	\nabla_{y_i} \, f_i\Big(\zero,\widetilde{y}_i\upzero,y_{-i}\Big) + \big(\widetilde{\lambda}_i\upzero\big)^T \nabla_{y_i} \, g_i\Big(\zero,\widetilde{y}_i\upzero\Big) &= 0 \label{model:overall:problem:KKT:0}\\
	0 \leq - g_i\Big(\zero,\widetilde{y}_i\upzero \Big)& \ \bot \ \widetilde{\lambda}_i\upzero \geq 0 \label{model:overall:problem:KKT:constraints:0} \\
	f_i\Big(\one,y_i\upone,y_{-i}\Big) + \kappa_i\upone - \zeta_i\upone - \kappa_i\upzero + \zeta_i\upzero&= f_i\Big(\zero,y_i\upzero,y_{-i}\Big) \label{model:overall:problem:incentive} \\
	\kappa_i\upone + \zeta_i\upone&\leq  x_i \, \widetilde{K}  \label{model:overall:problem:duals1} \\
	\kappa_i\upzero + \zeta_i\upzero&\leq \big(1-x_i\big) \, \widetilde{K}  \label{model:overall:problem:duals0} \\
	\widetilde{y}_i\upzero - x_i \, \widetilde{K} \leq y_i &\leq \widetilde{y}_i\upzero + x_i \, \widetilde{K}  \label{model:overall:problem:translate0} \\
	\widetilde{y}_i\upone - \big(1-x_i \big) \, \widetilde{K} \leq
	y_i &\leq \widetilde{y}_i\upone + \big(1-x_i \big) \, \widetilde{K} \label{model:overall:problem:translate1} \\
	x_i \in \{0,1\}, \big(y_i,\widetilde{y}_i\upfixedx \big) \in \mathbb{R}^{3m}, \big(\lambda_i\upfixedx, 
	& \kappa_i\upfixedx,\zeta_i\upfixedx\big) \in \mathbb{R}_+^{2k+4}
	\nonumber
	\end{align}
\end{subequations}
It is important to note that the binary variable of each player has an additional role in this formulation: it also controls which of the potential states with regard to the continuous variables are active and ``visible'' to the rivals (constraints~\ref{model:overall:problem:translate0} and~\ref{model:overall:problem:translate1}). Furthermore, it ensures that the correct switch values and compensation payments are active (constraints~\ref{model:overall:problem:duals1} and~\ref{model:overall:problem:duals0}), in line with Table~\ref{table:incentive_alignment}.

\begin{spacing}{1.1}
	\begin{theorem}[Exact solutions of the binary Nash game]\label{theorem:BNE}
		Under Assumption~\refAss{KKT:conditions}, any vector~$\big(x_i,y_i\big)_{i \in I}$ is a solution to the binary game in Definition~\ref{definition:BNE} if and only if there exists a vector~$\big(\widetilde{y}\upfixedx_i,\widetilde{\lambda}\upfixedx_i,\kappa\upfixedx_i\big)_{i \in I}$, with~$\zeta\upfixedx_i=0 \ \forall ~ i \in I$, such that $\big(x_i,y_i,\widetilde{y}\upfixedx_i,\widetilde{\lambda}\upfixedx_i,\kappa\upfixedx_i\big)_{i \in I}$ is a feasible point to problem~\eqref{model:overall:problem}. 
	\end{theorem}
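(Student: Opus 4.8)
The plan is to prove the two directions of the equivalence separately, in each case matching the three conditions of Definition~\ref{definition:BNE} against the constraints of problem~\eqref{model:overall:problem} once the compensation variables are set to zero. The guiding observation is that with $\zeta\upfixedx_i=0$ the big-$M$ constraints~\eqref{model:overall:problem:duals1}--\eqref{model:overall:problem:duals0} force, for each player, exactly one of $\kappa\upone_i,\kappa\upzero_i$ to vanish according to the value of $x_i$, so that the incentive equation~\eqref{model:overall:problem:incentive} collapses to a single signed inequality in the objective gap between the two binary states. That inequality is precisely the binary no-deviation condition of Definition~\ref{definition:BNE}.

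First I would treat the forward direction. Given a Nash equilibrium $\big(x_i^*,y_i^*\big)_{i\in I}$, I would set $\widetilde{y}_i^{(x_i^*)}=y_i^*$ and $\widetilde{y}_i^{(x_i^\times)}=y_i^\times$, using the best responses already furnished by Definition~\ref{definition:BNE}. Under Assumption~\refAss{KKT:conditions} these best responses satisfy the first-order conditions, so multipliers $\widetilde{\lambda}\upfixedx_i$ exist making~\eqref{model:overall:problem:KKT:1}--\eqref{model:overall:problem:KKT:constraints:0} hold, and the equality $y_i=\widetilde{y}_i^{(x_i)}$ satisfies the translation constraints~\eqref{model:overall:problem:translate0}--\eqref{model:overall:problem:translate1}. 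Setting $\zeta\upfixedx_i=0$ and defining the single nonzero switch value as the nonnegative objective gap $\big|f_i(\one,\widetilde{y}\upone_i,y_{-i})-f_i(\zero,\widetilde{y}\upzero_i,y_{-i})\big|$ assigned to the coordinate selected by $x_i$ (nonnegativity coming from the binary no-deviation inequality of Definition~\ref{definition:BNE}), I would verify~\eqref{model:overall:problem:incentive}--\eqref{model:overall:problem:duals0} directly, with $\widetilde{K}$ taken large enough; the compactness part of Assumption~\refAss{KKT:conditions} guarantees the gap is finite and such a bound exists.

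For the converse I would start from a feasible point of~\eqref{model:overall:problem} with $\zeta\upfixedx_i=0$ and read the conditions of Definition~\ref{definition:BNE} back out. Constraints~\eqref{model:overall:problem:KKT:1}--\eqref{model:overall:problem:KKT:constraints:0} together with the sufficiency half of Assumption~\refAss{KKT:conditions} imply that $\widetilde{y}\upone_i$ and $\widetilde{y}\upzero_i$ are genuine best responses for the two fixed binary values, and the translation constraints~\eqref{model:overall:problem:translate0}--\eqref{model:overall:problem:translate1} pin $y_i=\widetilde{y}_i^{(x_i)}$, giving the continuous best-response inequality. For the binary inequality I would do the case split on $x_i$: when $x_i=1$, \eqref{model:overall:problem:duals0} kills $\kappa\upzero_i$, so \eqref{model:overall:problem:incentive} reads $f_i(\one,\widetilde{y}\upone_i,y_{-i})+\kappa\upone_i=f_i(\zero,\widetilde{y}\upzero_i,y_{-i})$ and $\kappa\upone_i\ge 0$ yields $f_i(\one,\cdot)\le f_i(\zero,\cdot)$; the case $x_i=0$ is symmetric. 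Identifying $\widetilde{y}_i^{(x_i^\times)}$ with the alternative best response $y_i^\times$, this is exactly the second condition of Definition~\ref{definition:BNE}.

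I expect the main obstacle to be bookkeeping the exactness of the disjunctive reformulation rather than any deep argument: one must check that the big-$M$ constraints~\eqref{model:overall:problem:duals1}--\eqref{model:overall:problem:translate1} reproduce the case analysis of Table~\ref{table:incentive_alignment} without spuriously cutting off or admitting solutions, and in particular that a single finite $\widetilde{K}$ works uniformly across players and states. This is exactly where compactness of each player's feasible region enters, since it bounds both the continuous variables controlled by~\eqref{model:overall:problem:translate0}--\eqref{model:overall:problem:translate1} and the objective differences absorbed by the switch values $\kappa\upfixedx_i$. A secondary point to handle cleanly is the simultaneity across players: the rivals' continuous actions $y_{-i}$ seen by player~$i$ must coincide with those produced by the other players' translation constraints, which is automatic once the full vector is taken as given in either direction but should be stated explicitly.
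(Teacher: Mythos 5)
Your proposal is correct and follows essentially the same route as the paper's proof: both directions are verified by matching the constraints of problem~\eqref{model:overall:problem} (with $\zeta\upfixedx_i=0$) against the two conditions of Definition~\ref{definition:BNE}, invoking the necessity and sufficiency of the KKT conditions from Assumption~\refAss{KKT:conditions} and compactness to secure a finite~$\widetilde{K}$. Your explicit case split on $x_i$ showing how~\eqref{model:overall:problem:incentive}--\eqref{model:overall:problem:duals0} collapse to the no-deviation inequality is in fact somewhat more detailed than the paper's own argument, which asserts this step directly.
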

	\begin{proof}
		\begin{footnotesize}
			First, assume $\big(x_i,y_i,\widetilde{y}\upfixedx_i,\widetilde{\lambda}\upfixedx_i,\kappa\upfixedx_i\big)_{i \in I}$ with~$\zeta\upfixedx_i=0 \ \forall~i\in I$ is a feasible point to problem~\eqref{model:overall:problem}. Then, by Assumption~\refAss{KKT:conditions}, we know that the point~$\big(x_i,y_i\big)_{i \in I}$ is an optimal solution for each player given fixed values of~$x_i$ and~$y_{-i}$ $\forall~i\in I$. This satisfies the first part of Definition~\ref{definition:BNE}. Furthermore, we know that~$\zeta\upfixedx_i=0 \ \forall~i\in I$, and~$\big(\kappa\upfixedx_i\big)_{i\in I}$ will be selected according to the constraints of problem~\eqref{model:overall:problem}. By these constraints, we know that~$f_i\big(x_i,y_i,y_{-i}(x_{-i})\big) \leq f_i\big(x_i^\times,y_i^\times,y_{-i}(x_{-i})\big) \ \forall~i \in I$,
			where~$x_i^\times$ is the alternative value of~$x_i$ (i.e.,~$x_i^\times=1-x_i$) and~$y_i^\times$ is a best response of player~$i$, i.e.,
			\begin{align*}
			f_i\big(x_i^\times,y_i^\times,y_{-i}(x_{-i})\big) \leq  f_i\big(x_i^\times,y_i,y_{-i}(x_{-i})\big) \quad \forall~y_i \in \Big\{y_i\;|\;g_i\big(x_i^\times,y_i\big) \leq 0  \Big\} \quad \forall~i \in I. 
			\end{align*}
			This satisfies the second part of Definition \ref{definition:BNE} and thus we have shown that $\big(x_i,y_i\big)_{i \in I}$  is a solution to the binary game defined by Definition~\ref{definition:BNE}.
			
			Now, we assume that $\big(x_i,y_i\big)_{i \in I}$ is a solution to the binary game defined by Definition~\ref{definition:BNE}. Choose~$\widetilde{K}$ large enough so that it is greater than the difference between any upper and lower bounds on~$y_i \ \forall~i \in I$ and greater than the difference between the minimum and maximum value of~$f_i(\cdot) \ \forall~i \in I$. Such a value exists since by Assumption~\refAss{KKT:conditions}; the feasible set is compact and~$f_i(\cdot)$ is continuous, so the maximum and minimum must exist. Then, by Definition~\ref{definition:BNE}, for any fixed value of $x_i$ and $y_{-i}$, $y_i$ is an optimal solution to the individual player's optimization problem. Thus, you can find $(\widetilde{y}\upfixedx_i,\widetilde{\lambda}\upfixedx_i)$ such that $\big(x_i,y_i,\widetilde{y}\upfixedx_i,\widetilde{\lambda}\upfixedx_i\big)_{i \in I}$ satisfies the constraints~(\ref{model:overall:problem:KKT:1}--\ref{model:overall:problem:KKT:constraints:0}). Take $\zeta\upfixedx_i=0 \ \forall~i\in I$, and choose $\kappa\upfixedx_i$ according to constraint~\eqref{model:overall:problem:incentive}. Thus, for any solution to the binary game in Definition~\ref{definition:BNE} given by~$\big(x_i,y_i\big)_{i \in I}$, we have shown that there there exists a vector~$\big(\widetilde{y}\upfixedx_i,\widetilde{\lambda}\upfixedx_i,\kappa\upfixedx_i\big)_{i \in I}$, with~$\zeta\upfixedx_i=0 \ \forall~ i\in I$, such that $\big(x_i,y_i,\widetilde{y}\upfixedx_i,\widetilde{\lambda}\upfixedx_i,\kappa\upfixedx_i\big)_{i \in I}$ is a feasible point to problem~\eqref{model:overall:problem}. 
		\end{footnotesize}
	\end{proof}
\end{spacing}

The next theorem shows that the method can also be applied to obtain a quasi-equilibrium. Note that we need an assumption on the objective function before we can prove that our method can obtain a quasi-equilibrium.

\begin{itemize}
	\itemAss{upper:level:convex:quadratic} Assume that~$F\big(\cdot\big)$ and~$G\big(\cdot\big)$ are convex quadratic or linear functions for every fixed binary variable~$x_i$ and that $\partial \, G\big(\cdot\big)/\partial \, \zeta_i>0 \quad \forall~i\in I$.
\end{itemize}

\begin{spacing}{1.1}
	\begin{theorem}[Exact solutions of the binary Nash game with compensation]\label{theorem:BNE:compensation}
		Under Assumptions~\refAss{KKT:conditions} and~\refAss{upper:level:convex:quadratic}, if there exists a vector~ $\big(x_i,y_i,\widetilde{y}\upfixedx_i,\widetilde{\lambda}\upfixedx_i,\kappa\upfixedx_i,\zeta\upfixedx_i\big)_{i \in I}$ that is an optimal solution to problem~\eqref{model:overall:problem}, then the vector~$\big(x_i,y_i\big)_{i \in I}$ with compensation $\big(\zeta_i\big)_{i \in I}$ is a solution to the binary game as stated in Definition~\ref{definition:BNE:compensation}. Following the term introduced in Definition~\ref{definition:BNE:compensation}, we refer to this as a \emph{binary quasi-equilibrium}.
		
		Furthermore, under Assumptions~\refAss{KKT:conditions} and~\refAss{upper:level:convex:quadratic}, if~$(x_i,y_i)_{i \in I}$ is a solution to the binary game with compensation $\big(\zeta_i\big)_{i \in I}$ as stated in Definition~\ref{definition:BNE:compensation}, then there exists a vector~\smash{$\big(\widetilde{y}\upfixedx_i,\widetilde{\lambda}\upfixedx_i,\kappa\upfixedx_i,\zeta\upfixedx_i\big)_{i \in I}$}, such that \smash{$\big(x_i,y_i,\widetilde{y}\upfixedx_i,\widetilde{\lambda}\upfixedx_i,\kappa\upfixedx_i,\zeta\upfixedx_i\big)_{i \in I}$} is a feasible point to problem~\eqref{model:overall:problem}. 
	\end{theorem}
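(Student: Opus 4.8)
The plan is to establish both implications by closely paralleling the proof of Theorem~\ref{theorem:BNE}, the only genuinely new feature being that the compensation variables~$\zeta_i\upfixedx$ are now allowed to be strictly positive. For the forward direction I would start from an optimal point of problem~\eqref{model:overall:problem} and verify the three conditions of Definition~\ref{definition:BNE:compensation} in turn; for the reverse direction I would take a quasi-equilibrium and construct the auxiliary vector~$\big(\widetilde{y}_i\upfixedx,\widetilde{\lambda}_i\upfixedx,\kappa_i\upfixedx,\zeta_i\upfixedx\big)_{i\in I}$ that renders it feasible. The convexity half of Assumption~\refAss{upper:level:convex:quadratic} guarantees that once all binary variables are fixed the upper-level problem is convex and well-posed, while the hypothesis~$\partial G/\partial\zeta_i>0$ is precisely what will force the compensation to be minimal.

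For the forward direction, Assumption~\refAss{KKT:conditions} together with the KKT blocks~(\ref{model:overall:problem:KKT:1}--\ref{model:overall:problem:KKT:constraints:0}) and the translation constraints~(\ref{model:overall:problem:translate0}--\ref{model:overall:problem:translate1}) pin down~$y_i=\widetilde{y}_i\upfixedx$ as the best response of player~$i$ for the realized~$x_i$, giving condition~1 exactly as in Theorem~\ref{theorem:BNE}. For condition~2 I would split on the value of~$x_i$: the big-$\widetilde{K}$ constraints~\eqref{model:overall:problem:duals1} and~\eqref{model:overall:problem:duals0} force~$\kappa_i\upzero=\zeta_i\upzero=0$ when~$x_i=1$ (and symmetrically when~$x_i=0$), so the incentive equality~\eqref{model:overall:problem:incentive} collapses to~$f_i\big(\one,\widetilde{y}_i\upone,y_{-i}\big)+\kappa_i\upone-\zeta_i\upone=f_i\big(\zero,\widetilde{y}_i\upzero,y_{-i}\big)$. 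Since~$\kappa_i\upone\ge0$, rearranging gives~$f_i\big(\one,\widetilde{y}_i\upone,y_{-i}\big)-\zeta_i\upone\le f_i\big(\zero,\widetilde{y}_i\upzero,y_{-i}\big)$, which is precisely inequality~\eqref{definition:BNE:incentive:compatibility} with~$x_i^\times=0$ and~$y_i^\times=\widetilde{y}_i\upzero$.

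The main obstacle is condition~3, the minimality of the compensation, and this is where optimality and Assumption~\refAss{upper:level:convex:quadratic} enter. The decisive observation is that~$\zeta_i\upfixedx$ enters the objective only through~$G$ and enters the constraints only through the incentive equality~\eqref{model:overall:problem:incentive}, whereas~$F$ is independent of it. Consequently, moving along the feasible direction that lowers both~$\zeta_i\upone$ and~$\kappa_i\upone$ by a common~$\varepsilon>0$ leaves~\eqref{model:overall:problem:incentive} satisfied and~$F$ unchanged, while~$G$ strictly decreases because~$\partial G/\partial\zeta_i>0$. Optimality therefore rules out~$\kappa_i\upone$ and~$\zeta_i\upone$ both being positive, so whenever~$\zeta_i\upone>0$ we must have~$\kappa_i\upone=0$ and the incentive equality binds as~$f_i\big(\one,\widetilde{y}_i\upone,y_{-i}\big)-\zeta_i\upone=f_i\big(\zero,\widetilde{y}_i\upzero,y_{-i}\big)$, which is exactly the equality demanded by condition~3. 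Setting~$\zeta_i:=\zeta_i\upone$ (respectively~$\zeta_i\upzero$) then identifies the compensation of Definition~\ref{definition:BNE:compensation}.

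For the reverse direction I would invert this construction, and here only feasibility rather than optimality is required. Given a quasi-equilibrium~$\big(x_i,y_i\big)_{i\in I}$ with compensation~$\big(\zeta_i\big)_{i\in I}$, condition~1 of Definition~\ref{definition:BNE:compensation} and Assumption~\refAss{KKT:conditions} let me choose~$\widetilde{y}_i\upfixedx$ and multipliers~$\widetilde{\lambda}_i\upfixedx$ solving the best-response problem for each fixed value of the binary variable, so the KKT and translation blocks hold. I would then place~$\zeta_i$ in the slot matching the realized~$x_i$ (for instance~$\zeta_i\upone:=\zeta_i$ and~$\zeta_i\upzero:=0$ when~$x_i=1$) and solve~\eqref{model:overall:problem:incentive} for~$\kappa_i\upone$; the only point to verify is~$\kappa_i\upone\ge0$, which is immediate from condition~2 of Definition~\ref{definition:BNE:compensation}. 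Choosing~$\widetilde{K}$ larger than the span of each~$f_i$ and of the bounds on~$y_i$, exactly as in the proof of Theorem~\ref{theorem:BNE}, makes every big-$\widetilde{K}$ constraint slack and completes the feasibility check.
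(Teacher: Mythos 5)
Your proposal is correct and follows essentially the same route as the paper's own proof: condition~1 from Assumption~\refAss{KKT:conditions} and the KKT/translation blocks, condition~2 from the incentive equality~\eqref{model:overall:problem:incentive} together with the big-$\widetilde{K}$ constraints, condition~3 from optimality and $\partial G/\partial\zeta_i>0$, and the converse by explicit construction of the auxiliary variables with $\widetilde{K}$ chosen via compactness. Your $\varepsilon$-perturbation argument showing that $\kappa_i\upone$ and $\zeta_i\upone$ cannot both be positive at an optimum is simply a more explicit spelling-out of the paper's one-line appeal to $\partial G/\partial\zeta_i>0$, and is a welcome clarification rather than a different approach.
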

	\begin{proof}
		\begin{footnotesize}
			First, assume $\big(x_i,y_i,\widetilde{y}\upfixedx_i,\widetilde{\lambda}\upfixedx_i,\kappa\upfixedx_i,\zeta\upfixedx_i\big)_{i \in I}$ is an optimal solution to problem~\eqref{model:overall:problem}. Then, by~\refAss{KKT:conditions}, we know that the point~$\big(x_i,y_i\big)_{i \in I}$ is an optimal solution for each player given fixed values of $x_i$ and $y_{-i}$. This satisfies the first part of Definition~\ref{definition:BNE:compensation}. Furthermore, we know that  $\kappa\upfixedx_i,\zeta\upfixedx_i \ \forall~i\in I$ will be selected according to the constraints of problem~\eqref{model:overall:problem}. By these constraints, we know that $f_i\big(x_i,y_i,y_{-i}(x_{-i})\big) - \zeta_i \leq f_i\big(x_i^\times,y_i^\times,y_{-i}(x_{-i})\big) \ \forall~i \in I$, where~$x_i^\times$ is the alternative value of~$x_i$ (i.e.,~$x_i^\times=1-x_i$) and~$y_i$ is a best response of player $i$ with fixed~$x_i^\times$, i.e.,
			\begin{align*}
			f_i\big(x_i^\times,y_i^\times,y_{-i}(x_{-i})\big) \leq  f_i\big(x_i^\times,y_i,y_{-i}(x_{-i})\big) 
			\quad \forall~y_i \in \Big\{y_i\;|\;g_i\big(x_i^\times,y_i\big) \leq 0 \Big\} \quad \forall~i \in I.
			\end{align*}
			This satisfies the second part of Definition~\ref{definition:BNE:compensation}. 
			
			By Assumption~\refAss{upper:level:convex:quadratic}, we know that $\partial \, G\big(\cdot\big) / \partial \, \zeta_i >0 \ \forall~i\in I$ and, hence, for any optimal solution, for each player,~$\big(\zeta_i\big)_{i \in I}$ is minimal. This satisfies the third part of Definition \ref{definition:BNE:compensation} and hence we have shown that if $\big(x_i,y_i,\widetilde{y}\upfixedx_i,\widetilde{\lambda}\upfixedx_i,\kappa\upfixedx_i,\zeta\upfixedx_i\big)_{i \in I}$ is an optimal solution to problem~\eqref{model:overall:problem}, then $\big(x_i,y_i\big)_{i \in I}$ with compensation payments $\big(\zeta_i\big)_{i \in I}$ is a solution to the binary game defined by Definition~\ref{definition:BNE:compensation}. 
			
			Now, we assume that $\big(x_i,y_i\big)_{i \in I}$ with compensation payments $\big(\zeta_i\big)_{i \in I}$  is a quasi-equilibrium to the binary game with compensation defined by Definition~\ref{definition:BNE:compensation}. 
			Choose~$\widetilde{K}$ large enough so that it is greater than the difference between any upper and lower bounds on~$y_i \ \forall~i \in I$ and greater than the difference between the minimum and maximum value of~$f_i \ \forall~i \in I$. Such a value exists since by Assumption~\refAss{KKT:conditions}, the feasible set is compact and $f_i$ is continuous, so the maximum and minimum must exist. 
			Then, by Definition~\ref{definition:BNE:compensation}, for any fixed value of $x_i$ and $y_{-i}$, $y_i$ is an optimal solution to the individual player's optimization problem. Thus, you can find $(\widetilde{y}\upfixedx_i,\widetilde{\lambda}\upfixedx_i)$ such that $\big(x_i,y_i,\widetilde{y}\upfixedx_i,\widetilde{\lambda}\upfixedx_i\big)_{i \in I}$ satisfies the first two constraints in problem~\eqref{model:overall:problem}. Calculate $\zeta\upfixedx_i$ from $\zeta_i \ \forall~i\in I$ and choose $\kappa\upfixedx_i$ according to the third constraint in problem~\eqref{model:overall:problem}. Thus, for any solution to the binary game in Definition~\ref{definition:BNE:compensation} given by  $\big(x_i,y_i\big)_{i \in I}$ and compensation payment $\big(\zeta_i\big)_{i \in I}$, we have shown that there there exists a vector~$\big(\widetilde{y}\upfixedx_i,\widetilde{\lambda}\upfixedx_i,\kappa\upfixedx_i,\zeta\upfixedx_i\big)_{i \in I}$, such that $\big(x_i,y_i,\widetilde{y}\upfixedx_i,\widetilde{\lambda}\upfixedx_i,\kappa\upfixedx_i,\zeta\upfixedx_i\big)_{i \in I}$ is a feasible point to problem~\eqref{model:overall:problem}. 
		\end{footnotesize}
	\end{proof}
\end{spacing}

\begin{spacing}{1.1}
	\begin{corollary} \label{corollary:feasible:minimal}
		Under Assumptions~\refAss{KKT:conditions} and~\refAss{upper:level:convex:quadratic}, any vector~$(x_i,y_i)_{i \in I}$ is a binary quasi-equilibrium for the Nash game in binary variables with compensation $\big(\zeta_i\big)_{i \in I}$ as defined in Definition~\ref{definition:BNE:compensation} if there exists a vector~$\big(\widetilde{y}\upfixedx_i,\widetilde{\lambda}\upfixedx_i,\kappa\upfixedx_i,\zeta\upfixedx_i\big)_{i \in I}$, such that $\big(x_i,y_i,\widetilde{y}\upfixedx_i,\widetilde{\lambda}\upfixedx_i,\kappa\upfixedx_i,\zeta\upfixedx_i\big)_{i \in I}$ is a feasible solution to problem~\eqref{model:overall:problem} and~$\zeta\upfixedx_i$ is minimal as defined in Definition~\ref{definition:BNE:compensation}. 
	\end{corollary}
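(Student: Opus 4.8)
The plan is to obtain this corollary as a direct weakening of the forward direction of Theorem~\ref{theorem:BNE:compensation}, by observing that global optimality of problem~\eqref{model:overall:problem} was invoked in that theorem for a single purpose --- to force minimal compensation via Assumption~\refAss{upper:level:convex:quadratic} --- and that this role is now played by the explicit minimality hypothesis. Accordingly, I would fix a feasible point $\big(x_i,y_i,\widetilde{y}\upfixedx_i,\widetilde{\lambda}\upfixedx_i,\kappa\upfixedx_i,\zeta\upfixedx_i\big)_{i \in I}$ of problem~\eqref{model:overall:problem} for which the $\zeta\upfixedx_i$ are minimal in the sense of Definition~\ref{definition:BNE:compensation}, and then verify the three conditions of that definition in turn, reusing the bookkeeping already established for Theorem~\ref{theorem:BNE:compensation}.

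For the continuous best-response condition (part~1 of Definition~\ref{definition:BNE:compensation}), I would use only feasibility of the stationarity and complementarity constraints~(\ref{model:overall:problem:KKT:1}--\ref{model:overall:problem:KKT:constraints:0}): by Assumption~\refAss{KKT:conditions} these conditions are necessary and sufficient, so $\widetilde{y}\upone_i$ and $\widetilde{y}\upzero_i$ are the best responses for $x_i$ fixed at $1$ and $0$ respectively, given $y_{-i}$. The translation constraints~\eqref{model:overall:problem:translate0} and~\eqref{model:overall:problem:translate1}, with $\widetilde{K}$ chosen as in Theorem~\ref{theorem:BNE:compensation}, then force $y_i=\widetilde{y}\upfixedx_i$ for the realized value of $x_i$, so that $y_i$ is indeed the best response against $\big(x_i,y_{-i}\big)$, as required.

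For the no-profitable-binary-deviation condition (part~2), I would argue exactly as in Theorem~\ref{theorem:BNE:compensation}: feasibility of the incentive-compatibility equation~\eqref{model:overall:problem:incentive} together with the big-$\widetilde{K}$ bounds~\eqref{model:overall:problem:duals1} and~\eqref{model:overall:problem:duals0} forces the switch value and compensation consistent with the realized $x_i$ to be active (the sign pattern of Table~\ref{table:incentive_alignment}), yielding $f_i\big(x_i,y_i,y_{-i}\big)-\zeta_i \leq f_i\big(x_i^\times,y_i^\times,y_{-i}\big)$ with $y_i^\times$ the best response under the alternative binary value. The third condition, minimality of $\zeta_i$, is then immediate because it is precisely the hypothesis; in contrast to Theorem~\ref{theorem:BNE:compensation}, no appeal to optimality of problem~\eqref{model:overall:problem} nor to the monotonicity $\partial G/\partial\zeta_i>0$ from Assumption~\refAss{upper:level:convex:quadratic} is needed for this part.

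I expect the only genuine subtlety, rather than a real obstacle, to be checking that mere feasibility --- without global optimality --- of the disjunctive big-$\widetilde{K}$ constraints still pins down the correct active pair $\big(\kappa\upfixedx_i,\zeta\upfixedx_i\big)$ and the correct active $\widetilde{y}\upfixedx_i$: one must confirm that the off-state quantities are driven to zero by~\eqref{model:overall:problem:duals1}--\eqref{model:overall:problem:duals0} and that the on-state bound in~\eqref{model:overall:problem:translate0}--\eqref{model:overall:problem:translate1} collapses to an equality. This is the same argument underlying Table~\ref{table:incentive_alignment} and already employed in Theorem~\ref{theorem:BNE:compensation}, so it transfers verbatim, and the corollary follows.
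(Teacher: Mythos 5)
Your proposal is correct and follows essentially the same route as the paper: the paper's own proof simply invokes the arguments of Theorem~\ref{theorem:BNE:compensation} to get the first two conditions of Definition~\ref{definition:BNE:compensation} from feasibility alone, and then takes minimality of~$\zeta_i$ as the hypothesized Condition~3. Your more detailed walk-through of why optimality (and the monotonicity of~$G$) is not needed for the first two parts is exactly the observation the paper's one-line proof relies on.
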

	\begin{proof}
		\begin{footnotesize}
			By the arguments in Theorem~\ref{theorem:BNE:compensation}, for any point~$\big(x_i,y_i,\widetilde{y}\upfixedx_i,\widetilde{\lambda}\upfixedx_i,\kappa\upfixedx_i,\zeta\upfixedx_i\big)_{i \in I}$ that is feasible to problem \eqref{model:overall:problem}, the vectors~$\big(x_i,y_i)_{i \in I}$ and~$\big(\zeta_i)_{i \in I}$ satisfy the first two conditions of Definition~\ref{definition:BNE:compensation}. If, in addition, Condition~3 of Definition~\ref{definition:BNE:compensation} is satisfied, i.e., $\zeta_i$ is minimal for each player~$i \in I$, then~$\big(x_i,y_i\big)_{i \in I}$ is a binary quasi-equilibrium with compensation~$\big(\zeta_i\big)_{i \in I}$.
		\end{footnotesize}
	\end{proof}
\end{spacing}

If a vector is a global minimum to the objective function \eqref{model:overall:problem:objective}, this is the binary quasi-equilibrium with the lowest objective function value ~$F(\cdot) + G(\cdot)$. The following lemma and theorem provide conditions for the existence of a binary quasi-equilibrium that can be supported by appropriate compensation payments.

\begin{spacing}{1.1}
	\begin{lemma}[Existence of a Nash equilibrium in a game with fixed binary variables] \label{lemma:existence:fixed:binary}
		If for a fixed vector~$\big(\overline{x}_i\big)_{i \in I}$, the objective function~$f_i\big(\overline{x}_i,y_i,y_{-i}(x_{-i})\big)$ of every player~\mbox{$i \in I$} is continuous with regard to~$y_i$ and~$y_{-i}(\overline{x}_{-i})$, and quasi-convex in~$y_i$, and  the feasible region defined by the constraints~$g_i\big(\overline{x}_i,y_i\big)$ is compact, convex and non-empty,  then the resulting continuous game has a solution.
	\end{lemma}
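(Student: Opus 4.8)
The plan is to observe that, once the binary variables are frozen at $\big(\overline{x}_i\big)_{i\in I}$, the problem reduces to a standard continuous simultaneous-move game, so that the classical Debreu–Glicksberg–Fan existence theorem applies; I would verify its hypotheses directly and conclude via Kakutani's fixed-point theorem. Throughout, recall that each player \emph{minimizes} her objective, so quasi-\emph{convexity} of $f_i$ in $y_i$ plays exactly the role that quasi-concavity plays in the usual maximization form of the theorem. Hence, one could also simply invoke the cited existence result; I prefer a self-contained Kakutani argument.

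First I would fix the strategy sets $S_i=\big\{y_i\;|\;g_i\big(\overline{x}_i,y_i\big)\leq 0\big\}$, which by hypothesis are non-empty, compact, and convex; consequently the joint strategy space $S=\prod_{i\in I} S_i$ is a non-empty, compact, convex subset of a Euclidean space. Next I would introduce the best-response correspondence
\[
B_i(y_{-i})=\argmin_{y_i\in S_i} f_i\big(\overline{x}_i,y_i,y_{-i}\big),
\]
together with the joint map $B(y)=\prod_{i\in I} B_i(y_{-i})$ from $S$ into its own subsets, whose fixed points $y^*\in B(y^*)$ are precisely the profiles in which each $y_i^*$ is a best response to $y_{-i}^*$, i.e., the solutions of the continuous game.

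The core of the argument is to check that $B$ meets the hypotheses of Kakutani's theorem. For \emph{non-emptiness}: since $f_i$ is continuous in $y_i$ and $S_i$ is compact, the minimum is attained by Weierstrass, so $B_i(y_{-i})\neq\emptyset$. For \emph{convex-valuedness}: because $f_i$ is quasi-convex in $y_i$, its set of minimizers over the convex set $S_i$ is a sublevel set and hence convex. For the \emph{closed-graph} (upper hemicontinuity) property: this follows from the joint continuity of $f_i$ in $(y_i,y_{-i})$ together with compactness of $S_i$, via Berge's maximum theorem; equivalently, via a direct sequential argument, taking $y^n\to y$ and $z^n\in B_i(y^n_{-i})$ with $z^n\to z$, and passing to the limit in $f_i\big(\overline{x}_i,z^n,y^n_{-i}\big)\leq f_i\big(\overline{x}_i,w,y^n_{-i}\big)$ for every $w\in S_i$ to conclude $z\in B_i(y_{-i})$. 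With these three properties established, Kakutani's fixed-point theorem yields a fixed point $y^*\in B(y^*)$, which is the desired equilibrium.

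I expect the main obstacle to be establishing the closed-graph property, since this is exactly where the hypothesis of \emph{joint} continuity of $f_i$ in $\big(y_i,y_{-i}\big)$ — as opposed to mere continuity in each player's own variable — is indispensable, and where some care is needed to pass to the limit while the rivals' strategies vary simultaneously. By contrast, the non-emptiness and convexity checks are routine consequences of compactness and quasi-convexity, respectively.
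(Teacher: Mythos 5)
Your proposal is correct and follows essentially the same route as the paper, which simply cites Kakutani's fixed-point theorem (via Glicksberg) without spelling out the details. Your write-up fills in exactly the standard verification — non-empty compact convex strategy sets, and a non-empty, convex-valued, closed-graph best-response correspondence — that the paper's one-line proof leaves implicit.
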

	\begin{proof}
		\begin{footnotesize}
			The existence follows from Kakutani's fixed-point theorem \citep{Glicksberg_1952}.
		\end{footnotesize}
	\end{proof}
\end{spacing}

Relaxations of these conditions for the existence of a Nash equilibrium in continuous decision variables are also discussed in the literature \citep{Facchinei_2003_vi,Nishimura_Friedman_1981_Nash_quasiconcave}. The existence result for Nash equilibria in continuous games given fixed binary variables in Lemma~\ref{lemma:existence:fixed:binary} can be combined with Theorem~\ref{theorem:existence} to provide reasonable conditions for the existence of binary quasi-equilibria.
\begin{spacing}{1.1}
	\begin{theorem}[Existence of a binary quasi-equilibrium]\label{theorem:existence}
		Under Assumption~\refAss{KKT:conditions} and~\refAss{upper:level:convex:quadratic}, if for any fixed vector~$(\overline{x}_i)_{i \in I}$, the resulting continuous game has a solution, then a corresponding binary quasi-equilibrium exists for the Nash game in binary variables. 
	\end{theorem}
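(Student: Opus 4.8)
The plan is to reduce the existence of a binary quasi-equilibrium to the mere \emph{feasibility} of problem~\eqref{model:overall:problem}, and then to invoke Corollary~\ref{corollary:feasible:minimal} (with Assumption~\refAss{upper:level:convex:quadratic} supplying minimality of the compensation). The central observation is that, unlike a genuine Nash equilibrium, a quasi-equilibrium permits the market operator to deter \emph{any} binary deviation by a sufficiently large payment~$\zeta_i$; hence the only substantive equilibrium requirement lives in the continuous variables, and that is exactly what the hypothesis of the theorem supplies through Lemma~\ref{lemma:existence:fixed:binary}.

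First I would fix an arbitrary binary configuration~$\big(\overline{x}_i\big)_{i \in I}$ (say, all components equal to~$\zero$). By hypothesis the resulting continuous game has a solution~$\big(y_i^*\big)_{i \in I}$, i.e.\ a Nash equilibrium in the continuous variables with the binaries held fixed, so each~$y_i^*$ is a best response to~$y_{-i}^*$ given~$\overline{x}_i$. Under Assumption~\refAss{KKT:conditions} the KKT conditions are necessary and sufficient, so I can produce multipliers and set~$\widetilde{y}_i\upfixedx = y_i^*$ and~$y_i = y_i^*$ so that the block~\eqref{model:overall:problem:KKT:1}--\eqref{model:overall:problem:KKT:constraints:0} holds for the realised binary value. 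For the \emph{alternative} value~$1-\overline{x}_i$, Assumption~\refAss{KKT:conditions} again guarantees a compact, non-empty feasible region, so the single-player best response~$\widetilde{y}_i^{(1-\overline{x}_i)}$ exists and satisfies the corresponding KKT block as well.

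Next I would construct the switch values and compensations directly from~\eqref{model:overall:problem:incentive}. Since the big-$M$ constraints~\eqref{model:overall:problem:duals1}--\eqref{model:overall:problem:duals0} force the off-state pair to vanish, the equation collapses, for the realised state, to~$\kappa_i\upfixedx - \zeta_i\upfixedx = f_i\big(1-\overline{x}_i,\widetilde{y}_i^{(1-\overline{x}_i)},y_{-i}\big) - f_i\big(\overline{x}_i,\widetilde{y}_i\upfixedx,y_{-i}\big)$. If the right-hand side is non-negative, the player does not gain by switching: set the active~$\kappa_i\upfixedx$ equal to it and~$\zeta_i\upfixedx=0$. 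If it is negative, deviation is profitable: set~$\kappa_i\upfixedx=0$ and~$\zeta_i\upfixedx$ equal to the magnitude of the gain. In either case~$\kappa_i\upfixedx,\zeta_i\upfixedx\ge 0$, and both are finite because, by Assumption~\refAss{KKT:conditions},~$f_i$ is continuous on a compact set and hence bounded. Choosing~$\widetilde{K}$ larger than every payoff gap and every span of the continuous variables — exactly as in the proofs of Theorems~\ref{theorem:BNE} and~\ref{theorem:BNE:compensation} — renders the remaining constraints~\eqref{model:overall:problem:duals1}--\eqref{model:overall:problem:translate1} slack, so the constructed point is feasible for problem~\eqref{model:overall:problem}.

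The~$\zeta_i\upfixedx$ are minimal by construction (each equals the smallest payment that removes the deviation incentive), so Corollary~\ref{corollary:feasible:minimal} delivers a binary quasi-equilibrium. I expect the one delicate point to be the alternative-state best response: the hypothesis asserts solvability of the continuous game only at the \emph{chosen} configuration, not at the one in which a single player has flipped her bit. This gap is closed not by the game-theoretic hypothesis but by the single-player compactness in Assumption~\refAss{KKT:conditions}, which secures both the existence of~$\widetilde{y}_i^{(1-\overline{x}_i)}$ and the finiteness of the resulting deviation payoff; keeping these two distinct sources of existence separate is the main thing to get right.
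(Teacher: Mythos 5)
Your proof is correct and takes essentially the same route as the paper's: construct a feasible point of problem~\eqref{model:overall:problem} at the fixed binary configuration, with either $\kappa\upfixedx_i=0$ or $\zeta\upfixedx_i=0$ for each player so that the compensation is minimal, and then invoke Corollary~\ref{corollary:feasible:minimal}. You are somewhat more explicit than the paper about the source of the alternative-state best response (single-player compactness and continuity under Assumption~\refAss{KKT:conditions}, rather than the game-theoretic hypothesis), which is a worthwhile clarification but not a different argument.
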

	\begin{proof}
		\begin{footnotesize}
			For any~$\big(y_i\big)_{i \in I}$ that is a Nash equilibrium given the fixed vector~$\big(\overline{x}_i\big)_{i \in I}$,
			we can find a vector~$\big(\widetilde{y}\upfixedx_i,\widetilde{\lambda}\upfixedx_i\big)_{i \in I}$ such that~$\big(\overline{x}_i,y_i,\widetilde{y}\upfixedx_i,\widetilde{\lambda}\upfixedx_i\big)_{i \in I}$ is a feasible point to constraints~(\ref{model:overall:problem:KKT:1}--\ref{model:overall:problem:KKT:constraints:0}). Recall that~$\fixedx=\big(\{\overline{x}_i,1-\overline{x}_i\}\big)_{i \in I}$. Choose~$\widetilde{K}$ as in the proof for Theorem~\ref{theorem:BNE:compensation}. 
			
			We can then find values the vector for~$\big(\kappa\upfixedx_i,\zeta\upfixedx_i\big)_{i \in I}$ such that $\big(\fixedx,\widetilde{y}\upfixedx_i,\widetilde{\lambda}\upfixedx_i, \kappa\upfixedx_i,$ $\zeta\upfixedx_i\big)_{i \in I}$ satisfy constraints~(\ref{model:overall:problem:incentive}--\ref{model:overall:problem:translate1}), and where either $\kappa\upfixedx_i=0$ or $\zeta\upfixedx_i=0$ for every player~\mbox{$i \in I$}. By equation~\eqref{model:overall:problem:incentive}, this implies that $\zeta\upfixedx_i$ is minimal. By Corollary~\ref{corollary:feasible:minimal}, $\big(\overline{x}_i,y_i\big)_{i \in I}$ with compensation payments $\big(\zeta_i\big)_{i \in I}$ is a binary quasi-equilibrium.
		\end{footnotesize}
	\end{proof}
\end{spacing}

Theorem~\ref{theorem:existence} implies that, if a Nash equilibrium solution to the continuous game exists for any fixed realization of the binary variables, then this solution can be supported as a quasi-equilibrium with appropriate compensation payments.

The reformulated multi-objective problem subject to a binary quasi-equilibrium \eqref{model:overall:problem} is a mixed-integer program. However, the incentive-compatibility constraint (condition~\ref{model:overall:problem:incentive}) can still cause numerical difficulties, because the players' objective functions are often not linear and not even convex in terms of all variables, even when they are linear from the point of view of the player itself. We will now introduce a special case, which allows to reduce problem~\eqref{model:overall:problem} to a linear or quadratic convex mixed-integer program with linear constraints.

\begin{itemize}
	\itemAss{lower:level:linear} Assume that each lower-level player's objective function~$f_i\big(x_i,y_i,y_{-i}(x_{-i})\big)$ can be separated into two functions, where the first part is linear with respect to~$x_i$ and~$y_{-i}$, and the second part is linear only with respect to~$y_i$,
	\begin{align*}
	f_i\big(x_i,y_i,y_{-i}(x_{-i})\big) = 
	f^{(x)}_i\big(x_i,y_{-i}(x_{-i})\big) +
	f^{(y)}_i\big(y_i\;|\;y_{-i}(x_{-i})\big),
	\end{align*}
	and the partial derivative of the objective function with regard to the continuous variable~$y_i$, $\nabla_{y_i} f^{(y)}_i\big(y_i\upfixedx\;|\;y_{-i}(x_{-i})\big)$, is linear in all variables.
	
	Furthermore, assume that all constraints $g_i\big(x_i,y_i\big)$ are affine and can therefore be written as:
	\begin{align*}
	g_i\big(x_i,y_i\big) \leq 0 \Rightarrow  a_i x_i + A_i y_i \leq b_i
	\end{align*}
	where $a_i,b_i$ are vectors and $A_i$ a matrix of suitable dimensions.
\end{itemize}
This assumption implies that the functions~$f^{(y)}_i\big(y_i \;|\; y_{-i}(x_{-i})\big)$ need not be linear with respect to all variables, only with regard to the player's own continuous decision variable~$y_i$. 

One consequence of Assumption \refAss{lower:level:linear} is the caveat that the following theorems are not directly applicable to \emph{Nash-Cournot equilibrium} models, where a player is aware of its own impact on the final demand price. These models are usually formulated such that a player faces an objective function of the form $\max_y p(y) \, y$, where $p(y)$ is the inverse demand curve; this violates the linearity requirement for the reformulation.

However, we only need Assumption \mbox{\refAss{lower:level:linear}} to prove Theorem 4 below, which allows us to write the problem as a mixed-integer quadratic program, and obtain global optimality results. Whenever Assumption A3 does not hold, as in general game-theoretic settings, the reformulation introduced above as well as Theorems~\mbox{\ref{theorem:BNE}} and~\mbox{\ref{theorem:BNE:compensation}} are still applicable, but we cannot mathematically prove global optimality of a numerical solution. If we can show that problem (14) can be solved to optimality without Assumption A3, our method will obtain globally optimal results to the binary equilibrium problem. However, this will require novel research into general game-theoretic settings as well as nonlinear, mixed-integer optimization problems, both of which we plan to address in future work. We proceed with the theoretical results below to justify the general setting of the power market uplift problem, which is the topic of this paper.

\begin{spacing}{1.1}
	\begin{theorem}[Exact reformulation as a mixed-integer linear/quadratic program with linear constraints]\label{theorem:BNE:linear}
		Under Assumptions~\refAss{KKT:conditions}, \refAss{upper:level:convex:quadratic} and~\refAss{lower:level:linear}, the multi-objective program subject to a binary quasi-equilibrium (problem~\ref{model:overall:problem}) can be reformulated as a quadratic integer program with linear constraints. Theorems~\eqref{theorem:BNE} and~\eqref{theorem:BNE:compensation} remain valid.
	\end{theorem}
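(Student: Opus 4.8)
The plan is to go block by block through the constraints of problem~\eqref{model:overall:problem} and show that, under Assumption~\refAss{lower:level:linear}, each one is a linear (in)equality --- after the standard reformulation of the two complementarity systems --- while the objective~\eqref{model:overall:problem:objective} is convex quadratic by Assumption~\refAss{upper:level:convex:quadratic}. Since every transformation I use is an exact equivalence on the feasible set of~\eqref{model:overall:problem}, the feasible points (projected onto $(x_i,y_i)$) are unchanged, so the equivalences proved in Theorems~\ref{theorem:BNE} and~\ref{theorem:BNE:compensation} carry over verbatim; this is the meaning of the last sentence of the statement.

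First the routine blocks. Writing $g_i(x_i,y_i)=a_ix_i+A_iy_i-b_i$, the primal-feasibility parts of \eqref{model:overall:problem:KKT:constraints:1} and \eqref{model:overall:problem:KKT:constraints:0} are affine, and $\nabla_{y_i}g_i=A_i$ is constant. Hence in the stationarity equalities \eqref{model:overall:problem:KKT:1} and \eqref{model:overall:problem:KKT:0} the term $(\widetilde\lambda_i\upfixedx)^{T}\nabla_{y_i}g_i=(\widetilde\lambda_i\upfixedx)^{T}A_i$ is linear, and $\nabla_{y_i}f_i^{(y)}$ is linear in all variables by Assumption~\refAss{lower:level:linear}; so both stationarity conditions are linear. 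The only genuinely combinatorial pieces are the orthogonality relations in \eqref{model:overall:problem:KKT:constraints:1} and \eqref{model:overall:problem:KKT:constraints:0}, which I replace by the exact disjunctive big-$M$ formulation \citep{Fortuny-Amat_1981} or by SOS1 variables \citep{Siddiqui_2013_SOS1}; this adds auxiliary binaries but only linear constraints. The remaining constraints \eqref{model:overall:problem:duals1}--\eqref{model:overall:problem:duals0} and \eqref{model:overall:problem:translate0}--\eqref{model:overall:problem:translate1} contain only the product $x_i\widetilde K$ of a binary variable with the fixed scalar $\widetilde K$, hence are already linear.

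The crux is the incentive-compatibility equality~\eqref{model:overall:problem:incentive}, which is not linear as written. I would first split $f_i$ as in Assumption~\refAss{lower:level:linear}: the $x$-dependent part contributes $f_i^{(x)}(\one,y_{-i})-f_i^{(x)}(\zero,y_{-i})$, which is linear in $y_{-i}$, and any $y_{-i}$-only term of $f_i^{(y)}$ cancels in the difference. Because $f_i^{(y)}$ is linear in $y_i$, what is left is the bilinear term $\nabla_{y_i}f_i^{(y)}(\cdot\mid y_{-i})^{T}\big(\widetilde y_i\upone-\widetilde y_i\upzero\big)$, a product of a quantity affine in $y_{-i}$ with the continuous differences $\widetilde y_i\upfixedx$. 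The key move is to kill this bilinearity by using the lower-level optimality conditions already present in the model: stationarity gives $\nabla_{y_i}f_i^{(y)}=-(\widetilde\lambda_i\upfixedx)^{T}A_i$ for each state, and complementary slackness (enforced by \eqref{model:overall:problem:KKT:constraints:1}--\eqref{model:overall:problem:KKT:constraints:0}) gives $(\widetilde\lambda_i\upfixedx)^{T}A_i\widetilde y_i\upfixedx=(\widetilde\lambda_i\upfixedx)^{T}(b_i-a_ix_i)$. Substituting state by state turns $f_i^{(y)}(\widetilde y_i\upone\mid y_{-i})-f_i^{(y)}(\widetilde y_i\upzero\mid y_{-i})$ into the linear expression $-(\widetilde\lambda_i\upone)^{T}(b_i-a_i)+(\widetilde\lambda_i\upzero)^{T}b_i$, so \eqref{model:overall:problem:incentive} becomes a linear equality. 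Since both substitutions are identities at every point satisfying the lower-level conditions, this does not alter the feasible set.

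With all constraints linear and the objective convex quadratic for fixed $x$ by Assumption~\refAss{upper:level:convex:quadratic}, the model is a convex mixed-integer quadratic program with linear constraints. The main obstacle --- and the only nontrivial content --- is exactly the linearization of \eqref{model:overall:problem:incentive}: one must notice that the bilinear term can be removed only by invoking stationarity and complementary slackness together, and must track the $a_ix_i$ bookkeeping correctly across the two states $x_i\in\{0,1\}$ (it is $a_i$ for the ``on'' state and $0$ for the ``off'' state). Everything else is either affine by Assumption~\refAss{lower:level:linear} or a standard complementarity reformulation, and because each step is an exact equivalence the validity of Theorems~\ref{theorem:BNE} and~\ref{theorem:BNE:compensation} is immediate.
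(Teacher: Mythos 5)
Your proposal is correct and follows essentially the same route as the paper's proof: the decisive step in both is to use the linearity of $f^{(y)}_i$ in $y_i$ to write it as $\big(\nabla_{y_i}f^{(y)}_i\big)^{T}\widetilde{y}_i\upfixedx$, substitute the stationarity condition $\nabla_{y_i}f^{(y)}_i=-\big(\widetilde{\lambda}_i\upfixedx\big)^{T}A_i$, and then invoke complementary slackness to replace $\big(\widetilde{\lambda}_i\upfixedx\big)^{T}A_i\,\widetilde{y}_i\upfixedx$ by $\big(\widetilde{\lambda}_i\upfixedx\big)^{T}\big(b_i-a_i\fixedx\big)$, rendering constraint~\eqref{model:overall:problem:incentive} linear; the remaining blocks are handled identically (disjunctive reformulation of the complementarity conditions, affinity of everything else under Assumption~\refAss{lower:level:linear}). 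Your additional remark that each substitution is an identity on the feasible set, so Theorems~\ref{theorem:BNE} and~\ref{theorem:BNE:compensation} carry over, makes explicit a point the paper leaves implicit.
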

	\begin{proof} 
		\begin{footnotesize}
			By Assumptions~\refAss{KKT:conditions}, \refAss{upper:level:convex:quadratic} and~\refAss{lower:level:linear}, the objective function is linear or convex quadratic, and the first-order conditions and the players' constraints are linear. The complementarity conditions~\eqref{model:overall:problem:KKT:constraints:1} and~\eqref{model:overall:problem:KKT:constraints:0} can be reformulated using disjunctive constraints \citep{Fortuny-Amat_1981}.
			
			In the next steps, we will show how the incentive-compatibility constraint~\eqref{model:overall:problem:incentive} can be reformulated as a linear constraint. First, by assumption \refAss{lower:level:linear}, the function~$f^{(y)}_i\big(y_i\upfixedx\;|\;y_{-i}(x_{-i})\big)$ is linear with respect to~$y_i\upfixedx$. We know that all linear functions can be written as the product of their partial derivative and the variable with which the partial derivative was taken. More specifically, this can be written as:
			\begin{align*}
			f^{(y)}_i\big(y_i\upfixedx\;|\;y_{-i}(x_{-i})\big) = \Big(\nabla_{y_i} f^{(y)}_i\big(y_i\upfixedx\;|\;y_{-i}(x_{-i})\big) \Big)^T y_i\upfixedx.
			\end{align*}
			
			By first-order optimality, $\Big(\nabla_{y_i} f^{(y)}_i\big(y_i\upfixedx\;|\;y_{-i}(x_{-i})\big) \Big)= - \Big(\big(\lambda_i\upfixedx\big)^T \nabla_{y_i} \, g_i\big(\fixedx,y_i\upfixedx\big) \Big)$.
			
			Then, applying the definition of~$g_i(\cdot)$ in Assumption~\refAss{lower:level:linear}, $\nabla_{y_i} \, g_i\big(\fixedx,y_i\upfixedx\big) = A_i$, and using the complementarity condition of constraints~\eqref{model:overall:problem:KKT:constraints:1} and~\eqref{model:overall:problem:KKT:constraints:0}, $(a_i x_i + A_i y_i - b_i)^T \lambda_i = 0$,
			the reformulation proceeds as follows:
			\begin{align*}
			f_i\big(\fixedx,y_i,y_{-i}(x_{-i})\big) &= f^{(x)}_i\big(\fixedx,y_{-i}(x_{-i})\big) 
			+ f^{(y)}_i\big(y_i\upfixedx\;|\;y_{-i}(x_{-i})\big) \\
			&= f^{(x)}_i\big(\fixedx,y_{-i}(x_{-i})\big) 
			+ \Big(\nabla_{y_i} f^{(y)}_i\big(y_i\upfixedx\;|\;y_{-i}(x_{-i})\big) \Big)^T y_i\upfixedx \\
			&= f^{(x)}_i\big(\fixedx,y_{-i}(x_{-i})\big) 
			- \Big(\big(\lambda_i\upfixedx\big)^T \nabla_{y_i} \, g_i\big(\fixedx,y_i\upfixedx\big)  \Big)^T y_i\upfixedx \\
			&= f^{(x)}_i\big(\fixedx,y_{-i}(x_{-i})\big) 
			- \Big(\big(\lambda_i\upfixedx\big)^T A_i \Big)^T y_i\upfixedx \\
			&= f^{(x)}_i\big(\fixedx,y_{-i}(x_{-i})\big) 
			- \big(\lambda_i\upfixedx\big)^T \big( b_i - a_i \fixedx \big)
			\end{align*}
			Therefore, problem \eqref{model:overall:problem} can be reformulated as a linear/quadratic convex integer program with linear constraints.
		\end{footnotesize}
	\end{proof}
\end{spacing}
With this theorem, we show that our approach can be applied to a large number of problem classes and still be solved as a mixed-integer linear program. These include operational constraints such as capacity bounds or minimum generation requirements, and market forms including linear inverse demand functions. 

\subsection{Comparing the binary equilibrium to a social-planner model}\label{sec:model:comparison}
As a benchmark for comparing the binary-equilibrium method to commonly used approaches, we apply the following method: first, solve the welfare-maximizing problem assuming a central planner, then derive prices from the linearized model using the \cite{ONeill_2005_nonconvexities} method, and finally compute compensation payments for each player that has a profitable deviation, to ensure that every market participant is at last indifferent and the outcome is stable in the Nash sense; i.e., no profitable deviation exists given rivals' actions and market prices. This method is, in a way, a lexicographic solution approach to the overall problem; mathematically, it can be seen as a hierarchical min-min problem. For illustration, we formulate this problem along the notation used in the derivation of the binary equilibrium method: 
\begin{align}
\min_{\zeta_i} \ \ G&\Big(\big(\zeta_i\big)_{i \in I}\Big)
~ + \left\{ 
\begin{array}{l}
\displaystyle\min_{x_i,y_i} \ F\Big(\big(x_i,y_i\big)_{i \in I}\Big) \\[6 pt]
~\st g_i\big(x_i,y_i\big) \leq 0 \quad  \forall~i \in I
\end{array} \right\}
\label{twostage:problem}
\\ 
\st &
\left\{ 
\begin{array}{l}
f_i\big(x_i, y_i,y_{-i}(x_{-i})\big) - \zeta_i \leq  f_i\big(x_i^\times, y_i^\times,y_{-i}(x_{-i})\big)   \\[4pt]
\quad \forall \ (x_i^\times, y_i^\times) \in \displaystyle\argmin_{(x_i^\diamond, y_i^\diamond)} 
\Big\{f_i\big(x_i^\diamond, y_i^\diamond,y_{-i}(x_{-i})\big) \;\big|\;g_i\big(x_i^\diamond,y_i^\diamond\big)  \leq 0  \Big\} 
\end{array} \right\} \nonumber
\end{align}
\hfill $\forall \ i \in I$

\noindent Throughout the following discussion, we will denote the optimal solution of the inner welfare optimization of problem~\eqref{twostage:problem} as~$\big(x_i^\circ,y_i^\circ\big)_{i \in I}$, while~$\big(\zeta_i^\circ\big)_{i \in I}$ is the minimal compensation payment to guarantee incentive compatibility for all players. Similar as in the definition of the binary game introduced earlier, the objective function of player~$i$ evaluated at $\big(x_i^\times,y_i^\times\big)$ is the best a player can do by deviating (cf. Definitions~\ref{definition:BNE} and~\ref{definition:BNE:compensation}). In the context of this example, deviation is to be understood as not following the decision of the benevolent social planner.

For comparison, we restate the multi-objective program subject to a binary quasi-equilibrium (problem~\ref{model:overall:problem}) in simplified form:
\begin{align}
\min_{\zeta_i} \ G&\Big(\big(\zeta_i\big)_{i \in I}\Big) + F\Big(\big(x_i,y_i\big)_{i \in I}\Big)
\tag{\ref*{model:overall:problem}'} \label{MOPBQE:problem}
\\ 
\st &
\left\{ 
\begin{array}{rcl}
\nabla_{y_i} \, f_i\Big(\fixedx,\widetilde{y}_i\upfixedx,y_{-i}\Big) + \big(\widetilde{\lambda}_i\upfixedx\big)^T \nabla_{y_i} \, g_i\Big(\fixedx,\widetilde{y}_i\upfixedx\Big) &=& 0 \\
0 \leq - g_i\Big(\fixedx,\widetilde{y}_i\upfixedx \Big) & \bot & \widetilde{\lambda}_i\upfixedx \geq 0  \\
f_i\Big(\one,y_i\upone,y_{-i}\Big) + \kappa_i\upone - \zeta_i\upone - \kappa_i\upzero + \zeta_i\upzero&=& f_i\Big(\zero,y_i\upzero,y_{-i}\Big) \\[4 pt]
\text{``translation'' constraints \ref{model:overall:problem:duals1}--\ref{model:overall:problem:translate1}}& \\
\end{array} \right\} \nonumber
\end{align}
\hfill $\forall \ i \in I$

\noindent In the following discussion, the optimal values of each player's binary and continuous decision variables in equilibrium and the compensation payments are denoted by~$\big(x_i^*,y_i^*,\zeta_i^*\big)_{i \in I}$. The key difference between the above mathematical programs is that problem~\eqref{twostage:problem} solves an optimization while problem~\eqref{model:overall:problem} solves for a non-cooperative equilibrium among all players. Thus, problem~\eqref{twostage:problem} is constrained by general power market constraints while problem~\eqref{model:overall:problem} has additional equilibrium constraints. Clearly, problem~\eqref{model:overall:problem} is a restricted version of problem~\eqref{twostage:problem} whenever there are no compensation payments or when the objective function does not include a penalty term~\smash{$G\big((\zeta_i)_{i \in I}\big)$}. 

The following two theorems formalize the idea that the social planner problem is a less constrained version of the binary equilibrium problem. First, we show that if the socially optimal outcome~$\big(x_i^\circ,y_i^\circ\big)_{i \in I}$ does not require any compensation payments, this solution (weakly) dominates the binary equilibrium outcome. Second, a similar argument can be made if profitable deviations exist in the socially optimal outcome, but the social planner does not assign a penalty for compensation in its objective function, i.e., $G(\cdot)=0$. This is the case if such payments are assumed to incur no loss to overall welfare. 

\begin{spacing}{1.1}
	\begin{theorem} \label{theorem:no_compensation}
		If any optimal solution obtained by the social planner (problem~\ref{twostage:problem}) does not require compensation payments to any player~$i \in I$, then the objective value achieved by the social planner is at least as small as solution of the binary quasi-equilibrium model, i.e.,
		\begin{align*}
		\sum_i \zeta_i^\circ = 0 
		\quad \Rightarrow & \quad
		f_i\big(x_i^\circ,y_i^\circ,y_{-i}^\circ(x_{-i}^\circ)\big) \leq  f_i\big(x_i^\times, y_i^\times,y_{-i}^\circ(x_{-i}^\circ)\big) \quad \forall \ i \in I \\
		\quad \Rightarrow & \quad
		F\Big(\big(x_i^\circ,y_i^\circ\big)_{i \in I}\Big) \leq F\Big(\big(x_i^*,y_i^*\big)_{i \in I}\Big).
		\end{align*}
	\end{theorem}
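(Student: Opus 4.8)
The plan is to establish the two implications in the stated chain separately, and then to obtain the efficiency comparison from a containment-of-feasible-regions argument rather than from the optimality of the binary equilibrium.

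\textbf{First implication.} Because every $\zeta_i^\circ \geq 0$ and $\sum_i \zeta_i^\circ = 0$, each $\zeta_i^\circ = 0$. Substituting $\zeta_i^\circ = 0$ into the incentive-compatibility constraint of problem~\eqref{twostage:problem} immediately yields $f_i\big(x_i^\circ,y_i^\circ,y_{-i}^\circ(x_{-i}^\circ)\big) \leq f_i\big(x_i^\times,y_i^\times,y_{-i}^\circ(x_{-i}^\circ)\big)$ for every $i \in I$, where $(x_i^\times,y_i^\times)$ is the best deviation defined in~\eqref{twostage:problem}. This is precisely the middle inequality.

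\textbf{Second implication, well-posedness.} First I would observe that $(x_i^\times,y_i^\times)$ minimizes $f_i$ over the player's \emph{entire} feasible set $\{(x_i,y_i)\,|\,g_i(x_i,y_i)\leq 0\}$, jointly in the binary and continuous variables. Since $(x_i^\circ,y_i^\circ)$ is feasible for that set, the no-deviation inequality can only hold with equality, so $(x_i^\circ,y_i^\circ)$ is itself a best response for each player. Consequently $(x_i^\circ,y_i^\circ)_{i\in I}$ satisfies Definition~\ref{definition:BNE}, and by Theorem~\ref{theorem:BNE} it is a feasible point of problem~\eqref{model:overall:problem} with all compensation variables equal to zero; in particular the equilibrium-constrained region is nonempty and the comparison is meaningful.

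\textbf{Second implication, decisive step.} The inner optimization of problem~\eqref{twostage:problem} minimizes $F$ over the full market-feasible set $\{(x_i,y_i)_{i\in I}\,|\,g_i(x_i,y_i)\leq 0 \ \forall i\}$, so $(x_i^\circ,y_i^\circ)_{i\in I}$ attains the global minimum of $F$ there. The binary equilibrium $(x_i^*,y_i^*)_{i\in I}$ is itself market-feasible: the complementarity constraints~\eqref{model:overall:problem:KKT:constraints:1} and~\eqref{model:overall:problem:KKT:constraints:0} ensure $g_i\big(x_i,\widetilde{y}_i^{(x_i)}\big)\leq 0$, while the translation constraints~\eqref{model:overall:problem:translate0}--\eqref{model:overall:problem:translate1} force $y_i^* = \widetilde{y}_i^{(x_i^*)}$, whence $g_i(x_i^*,y_i^*)\leq 0$. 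Since $(x_i^*,y_i^*)_{i\in I}$ lies in the very set over which $(x_i^\circ,y_i^\circ)_{i\in I}$ globally minimizes $F$, we conclude $F\big((x_i^\circ,y_i^\circ)_{i\in I}\big) \leq F\big((x_i^*,y_i^*)_{i\in I}\big)$.

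\textbf{Main obstacle.} The delicate point is the direction of the inequality together with the fact that only the efficiency term $F$ is compared, not the full objective $F+G$. The tempting route---inserting the social optimum (feasible with zero compensation) into problem~\eqref{model:overall:problem} and invoking optimality of $(x_i^*,y_i^*)$ for $F+G$---produces the \emph{opposite} inequality and conflates the two objective terms. The clean argument is instead the containment of feasible regions: the social planner optimizes $F$ over a less constrained set than the one containing the binary equilibrium, so it cannot do worse in $F$. The no-compensation hypothesis is what makes the dominance statement meaningful, by certifying that the social optimum is a genuine Nash equilibrium, rather than what drives the $F$ inequality itself.
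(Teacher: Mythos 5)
Your proof is correct and follows essentially the same route as the paper's: both rest on the observation that the binary-equilibrium point is feasible for the (less constrained) inner social-planner problem, so the planner's minimum of $F$ cannot exceed $F$ at the equilibrium. You merely spell out the containment more explicitly (verifying market-feasibility of $(x_i^*,y_i^*)_{i\in I}$ via the complementarity and translation constraints) and add a well-posedness remark, whereas the paper states the feasible-region inclusion and the minimality of $G$ at zero compensation more tersely.
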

	\begin{proof}
		\begin{footnotesize}
			Assume $\sum_i \zeta_i^\circ = 0$ at any optimal point for problem~\eqref{twostage:problem}. Whenever $\sum_i \zeta_i^\circ = 0$ in problem~\eqref{twostage:problem}, the feasible region of problem~\eqref{model:overall:problem} is a subset of the feasible region for problem~\eqref{twostage:problem}. Moreover, since $G\big((\zeta_i)_{i \in I}\big)$ is an increasing function of $(\zeta_i)_{i \in I}$, whenever $\sum_i \zeta_i^\circ = 0$, $G\big((\zeta_i)_{i \in I}\big)$ is at its minimum. Thus, the optimal objective function value of problem~\eqref{twostage:problem} forms a lower-bound to any optimal objective function value of problem~\eqref{model:overall:problem}. These two facts combine to show that whenever  $\sum_i \zeta_i^\circ = 0$ at any optimal point for problem~\eqref{twostage:problem}, $F\big((x_i^\circ,y_i^\circ)_{i \in I}\big) \leq F\big((x_i^*,y_i^*)_{i \in I}\big)$.     
		\end{footnotesize}
	\end{proof}
\end{spacing}
The reasoning for Theorem~\ref{theorem:no_compensation} is quite straightforward: if compensation payments are not required, the regularizer $G\big((\zeta_i)_{i \in I}\big)$ does not add anything to the objective value of problem~\eqref{twostage:problem} and the incentive-compatibility constraints are not relevant. Then, the binary-equilibrium model (problem~\ref{model:overall:problem}) is a more constrained version of the inner problem of the social-welfare maximizing planner: the objective function~$F\big((x_i,y_i)_{i \in I}\big)$ and the constraints~$g_i\big(x_i,y_i\big)$ are present in both programs, but the binary-equilibrium reformulation adds further constraints.

\begin{spacing}{1.1}
	\begin{theorem} \label{theorem:free_compensation}
		If compensation payments do not incur any cost to overall societal welfare in problems~\eqref{twostage:problem}~and~\eqref{model:overall:problem}, then the optimal objective value achieved by the social planner is at least as small as the solution of the binary quasi-equilibrium model, i.e., 
		\begin{align*}
		G\Big(\big(\zeta_i\big)_{i \in I}\Big)=0 
		\quad \Rightarrow \quad
		F\Big(\big(x_i^\circ,y_i^\circ\big)_{i \in I}\Big) \leq F\Big(\big(x_i^*,y_i^*\big)_{i \in I}\Big).
		\end{align*}
	\end{theorem}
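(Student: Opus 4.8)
The plan is to follow the template of Theorem~\ref{theorem:no_compensation}, replacing its trigger $\sum_i \zeta_i^\circ = 0$ by the present hypothesis $G\big((\zeta_i)_{i \in I}\big) = 0$. First I would note that once the regularizer vanishes identically it contributes nothing to either objective, so the objective of problem~\eqref{twostage:problem} reduces to the value of its inner welfare minimization and the objective of problem~\eqref{model:overall:problem} reduces to $F\big((x_i,y_i)_{i \in I}\big)$. In particular, the social-planner outcome $\big(x_i^\circ,y_i^\circ\big)_{i \in I}$ is, by construction, a global minimizer of $F$ over the primal feasible set $\big\{(x_i,y_i) : g_i(x_i,y_i) \leq 0 \ \forall\,i \in I\big\}$; and since compensation is now free, the incentive-compatibility constraints on $(\zeta_i)_{i \in I}$ in~\eqref{twostage:problem} can always be met by choosing each $\zeta_i$ large enough, so they impose no restriction on the attainable $(x_i,y_i)$.

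Second -- and this is the crux -- I would establish that the feasible set of problem~\eqref{model:overall:problem}, projected onto the realized decision variables $\big(x_i,y_i\big)_{i \in I}$, is contained in the primal feasible set $\big\{(x_i,y_i) : g_i(x_i,y_i) \leq 0 \ \forall\,i \in I\big\}$. This is exactly where the equilibrium constraints are used: the complementarity conditions~\eqref{model:overall:problem:KKT:constraints:1} and~\eqref{model:overall:problem:KKT:constraints:0} force $g_i\big(\fixedx,\widetilde{y}_i\upfixedx\big) \leq 0$ for both values of the binary variable, while the translation constraints~\eqref{model:overall:problem:translate0} and~\eqref{model:overall:problem:translate1} pin $y_i$ to $\widetilde{y}_i\upfixedx$ for whichever value of $x_i$ is selected. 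Hence the realized pair $(x_i,y_i)$ inherits $g_i(x_i,y_i) \leq 0$, so every binary (quasi-) equilibrium is a feasible point of the reduced social-planner problem.

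Finally, I would close with the standard lower-bound argument: because $F$ is minimized over the (weakly) larger feasible set in~\eqref{twostage:problem}, while the equilibrium solution $\big(x_i^*,y_i^*\big)_{i \in I}$ is merely one feasible point of that same set, optimality of the social planner gives $F\big((x_i^\circ,y_i^\circ)_{i \in I}\big) \leq F\big((x_i^*,y_i^*)_{i \in I}\big)$, which is the claim. I expect the main obstacle to be the second step: one must verify carefully that the translation constraints together with the complementarity conditions genuinely force primal feasibility of the realized point, and in particular that the scalar $\widetilde{K}$ is taken large enough -- as in the proof of Theorem~\ref{theorem:BNE:compensation} -- that it never excises a legitimate feasible point while still enforcing the correct coupling between $x_i$ and the selected continuous response $\widetilde{y}_i\upfixedx$.
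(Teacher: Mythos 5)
Your proposal is correct and follows essentially the same route as the paper's proof: with $G\equiv 0$ both objectives reduce to $F$, the feasible region of problem~\eqref{model:overall:problem} projected onto $(x_i,y_i)_{i\in I}$ is contained in that of problem~\eqref{twostage:problem}, and optimality of the planner over the larger set gives the inequality. The paper simply asserts the feasible-region containment, whereas you justify it via the complementarity and translation constraints; your closing worry about $\widetilde{K}$ is unnecessary for this direction, since the containment needs only that a feasible point of~\eqref{model:overall:problem} satisfies $g_i(x_i,y_i)\leq 0$, not that every planner solution remains attainable.
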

	\begin{proof}
		\begin{footnotesize}
			Assume that $G\big((\zeta_i)_{i \in I}\big)=0$ in both problems~\eqref{twostage:problem}~and~\eqref{model:overall:problem}. Then, for any $\big(\zeta_i\big)_{i \in I}$, the feasible region of problem~\eqref{model:overall:problem} is a subset of the feasible region of problem~\eqref{twostage:problem}. Thus, for any optimal solution to problem~\eqref{twostage:problem}, $F\big( (x_i^\circ,y_i^\circ)_{i \in I}\big) \leq F\big((x_i^*,y_i^*)_{i \in I}\big)$.  
		\end{footnotesize}
	\end{proof}
\end{spacing}

The reasoning is similar to Theorem~\ref{theorem:no_compensation}: if compensation does not incur any penalty or negative effects, then redistribution of the biggest possible surplus (or least possible cost) is indeed the preferred strategy. The additional constraints of the binary-equilibrium method may restrict the solution space and could make the socially optimal outcome infeasible in problem~\eqref{model:overall:problem}.

So far, we have shown cases where the social planner is superior to the binary-equilibrium model. We now show that if compensation payments are to be used, there is a case where the opposite is true.
\begin{itemize}
	\itemAss{two-stage:comparison} Assume that the objective function of the overall problem is the aggregate of the pay-offs of each player, i.e., \\
	$F\Big(\big(x_i,y_i\big)_{i \in I}\Big)= \sum_i f_i\big(x_i,y_i,y_{-i}(x_{-i})\big).$
\end{itemize}
This assumption means that the upper-level player acts as a social planner in the sense that it optimizes the joint pay-off of all market participants. It also implies that no player exerts market power (i.e., each firm is a price-taker). Another implication is that consumer welfare can be included in the objective function, if demand is modeled as an active player. 

\begin{spacing}{1.1}
	\begin{theorem}\label{theorem:eq_compensation}
		Under Assumptions~\refAss{KKT:conditions}, \refAss{upper:level:convex:quadratic}, \refAss{lower:level:linear} and~\refAss{two-stage:comparison}, the optimal solution of the multi-objective problem subject to a binary quasi-equilibrium (problem~\ref{model:overall:problem}) is at least as good as the solution to the social-welfare problem~\eqref{twostage:problem} subject to ex-post compensation to guarantee incentive compatibility, i.e.,
		\begin{align*}
		F\Big(\big(x_i^*,y_i^*\big)_{i \in I}\Big) + G\Big(\big(\zeta_i^*\big)_{i \in I}\Big)  \leq F\Big(\big(x_i^\circ,y_i^\circ\big)_{i \in I}\Big) + G\Big(\big(\zeta_i^\circ\big)_{i \in I}\Big).
		\end{align*}
	\end{theorem}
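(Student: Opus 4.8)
The plan is to reduce the inequality to a single feasibility check. Because problem~\eqref{model:overall:problem} minimizes $F(\cdot)+G(\cdot)$ over its feasible set, it suffices to exhibit one feasible point of~\eqref{model:overall:problem} whose objective value equals the value attained by the social planner. Writing $F^\circ+G^\circ := F\big((x_i^\circ,y_i^\circ)_{i \in I}\big)+G\big((\zeta_i^\circ)_{i \in I}\big)$ and $F^*+G^*$ for the optimal value of~\eqref{model:overall:problem}, the optimum is then automatically no larger, which is exactly the claim. The natural candidate is the social-planner outcome $\big(x_i^\circ,y_i^\circ,\zeta_i^\circ\big)_{i \in I}$ itself, augmented with auxiliary variables $\big(\widetilde{y}_i\upfixedx,\widetilde{\lambda}_i\upfixedx,\kappa_i\upfixedx,\zeta_i\upfixedx\big)_{i \in I}$. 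Note that this reverses the inclusion used in Theorems~\ref{theorem:no_compensation} and~\ref{theorem:free_compensation}: there one argued that~\eqref{model:overall:problem} is a restriction of~\eqref{twostage:problem}, whereas here one must place the~\eqref{twostage:problem}-optimizer inside the feasible region of~\eqref{model:overall:problem}.

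The crux -- and the step I expect to be the main obstacle -- is to show that Assumption~\refAss{two-stage:comparison} makes the inner optimizer $(x_i^\circ,y_i^\circ)_{i \in I}$ a binary equilibrium, so that it satisfies the best-response constraints~(\ref{model:overall:problem:KKT:1}--\ref{model:overall:problem:KKT:constraints:0}) of~\eqref{model:overall:problem}. Since $F=\sum_i f_i$ and every player is a price-taker, this is precisely the first welfare theorem: the joint minimizer of the aggregate pay-off coincides with the componentwise Nash best response for the fixed binaries $(x_i^\circ)_{i \in I}$. I would make this precise by writing the stationarity condition of the aggregate inner problem with respect to each $y_i$ and showing that the cross terms $\sum_{j \neq i}\nabla_{y_i} f_j$ collect only the price-mediated (pecuniary) transfers among players, which cancel in $\sum_i f_i$ precisely because no player exercises market power; what remains is player~$i$'s own first-order condition, which by Assumption~\refAss{KKT:conditions} is necessary and sufficient for $y_i^\circ$ to be a best response given $y_{-i}^\circ$ and $x_i^\circ$. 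The separability and affine structure granted by Assumption~\refAss{lower:level:linear} keep these manipulations linear. This equivalence is delicate: it is what forces Assumption~\refAss{two-stage:comparison}, and the argument breaks the moment a player internalizes its own effect on price, so care is needed to verify that the transfer terms genuinely cancel rather than leaving a wedge between the social optimum and the equilibrium.

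Granting this, the remaining auxiliary variables fall into place. Taking $\widetilde{y}_i^{(x_i^\circ)}=y_i^\circ$, letting $\widetilde{y}_i^{(1-x_i^\circ)}$ be the best response under the flipped binary, and choosing $\widetilde{K}$ as in the proof of Theorem~\ref{theorem:BNE:compensation}, I would verify that the ex-post compensation $\zeta_i^\circ$ from~\eqref{twostage:problem} satisfies the incentive-compatibility constraint~\eqref{model:overall:problem:incentive} together with the switch--compensation bounds~\eqref{model:overall:problem:duals1}--\eqref{model:overall:problem:duals0}. The key point is that, once $y_i^\circ$ is known to be a best response for $x_i^\circ$, the only profitable move in the $\argmin$ that defines the constraints of~\eqref{twostage:problem} is the binary flip $x_i^\times=1-x_i^\circ$ with its continuous best response -- which is exactly the deviation metered by $\kappa_i\upfixedx$ and $\zeta_i\upfixedx$ in~\eqref{model:overall:problem}. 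Hence $\zeta_i^\circ$ is both feasible and minimal for~\eqref{model:overall:problem} in the sense of Definition~\ref{definition:BNE:compensation}, so by Corollary~\ref{corollary:feasible:minimal} the augmented point is feasible. Invoking optimality of $(x_i^*,y_i^*,\zeta_i^*)_{i \in I}$ then delivers $F^*+G^* \le F^\circ+G^\circ$, with strict inequality exactly when~\eqref{model:overall:problem} can trade a small rise in $F$ for a larger cut in the compensation budget $G$ -- the trade-off that the lexicographic problem~\eqref{twostage:problem} cannot exploit.
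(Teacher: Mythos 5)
Your proposal is correct and follows essentially the same route as the paper's proof: exhibit the social-planner outcome as a feasible point of problem~\eqref{model:overall:problem} by using Assumption~\refAss{two-stage:comparison} to identify the planner's first-order conditions (for fixed binaries) with the stacked per-player KKT conditions, then conclude via the existence/feasibility machinery (the paper routes this through Theorem~\ref{theorem:existence}, you through Corollary~\ref{corollary:feasible:minimal} directly) and optimality of problem~\eqref{model:overall:problem}. Your elaboration of why the cross terms cancel and why $\zeta_i^\circ$ is minimal is more detailed than the paper's, but it is the same argument.
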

	\begin{proof} 
		\begin{footnotesize}
			Let $(x_i^\circ,y_i^\circ)_{i \in I}$ denote the optimal solution to the social-welfare problem, and $(\zeta_i^\circ)_{i \in I}$ is the vector of compensation payments necessary to align the incentives of all players. Following Assumption~\refAss{two-stage:comparison}, we know that the first-order optimality conditions of the social planner in problem~\eqref{twostage:problem} are identical to the stacked first-order optimality conditions of each player, whenever the binary variables are fixed at $x_i^\circ$, i.e.,
			\begin{align*}
			\nabla_{y_i} \, F_i\big(x_i^\circ,y_i\big) 
			+ \big(\lambda_i\big)^T \nabla_{y_i} \, g_i\big(x_i^\circ,y_i\big) 
			= \left\{
			\nabla_{y_i} \, f_i\big(x_i^\circ,y_i\big) + \big(\lambda_i\big)^T \nabla_{y_i} \, g_i\big(x_i^\circ,y_i\big)
			\right\}_{i \in I}
			\end{align*}
			Therefore, the vector $(y_i^\circ)_{i \in I}$ is an equilibrium to the continuous game with binary variables fixed at~$(x_i^\circ)_{i \in I}$, and following Theorem~\ref{theorem:existence}, the vector~$(x_i^\circ,y_i^\circ)_{i \in I}$ can be implemented as a binary quasi-equilibrium. The outcome~$(x_i^\circ,y_i^\circ)_{i \in I}$ with compensation vector~$(\zeta_i^\circ)_{i \in I}$ and corresponding dual variables thus feasible to problem~\eqref{model:overall:problem}.
		\end{footnotesize}
	\end{proof}
\end{spacing}

The above theorem has an important implication for the numerical implementation of the binary-equilibrium method: if Assumption~\refAss{two-stage:comparison} is satisfied, the solution to the social-welfare optimization program (problem~\ref{twostage:problem}) can be used as a starting point for solving the multi-objective binary-equilibrium problem. As the social-welfare problem is less computationally challenging, this should help in developing more efficient solution methods for the binary equilibrium method.

Last, we state conditions when the solutions of the two approaches coincide.
\begin{spacing}{1.1}
	\begin{corollary}
		Under Assumptions~\refAss{KKT:conditions}, \refAss{upper:level:convex:quadratic}, \refAss{lower:level:linear} and~\refAss{two-stage:comparison}, whenever either $\sum_i \zeta_i^\circ = 0$ at optimality for problem~\eqref{twostage:problem} or $G\big((\zeta_i)_{i \in I}\big)=0$  for both problems~\eqref{twostage:problem}~and~\eqref{model:overall:problem} the optimal objective function value of the multi-objective problem subject to a binary quasi-equilibrium (problem~\ref{model:overall:problem}) is equal to the optimal objective function value of the social-welfare problem~\eqref{twostage:problem} subject to ex-post compensation to guarantee incentive compatibility, i.e.,
		\begin{align*}
		F\Big(\big(x_i^*,y_i^*\big)_{i \in I}\Big) + G\Big(\big(\zeta_i^*\big)_{i \in I}\Big)  
		= 
		F\Big(\big(x_i^\circ,y_i^\circ\big)_{i \in I}\Big) + G\Big(\big(\zeta_i^\circ\big)_{i \in I}\Big) .
		\end{align*}
		
	\end{corollary}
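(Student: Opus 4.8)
The plan is to obtain the equality by squeezing the two optimal objective values between the inequalities already established in Theorems~\ref{theorem:no_compensation}, \ref{theorem:free_compensation}, and~\ref{theorem:eq_compensation}. Write $V^* := F\big((x_i^*,y_i^*)_{i \in I}\big) + G\big((\zeta_i^*)_{i \in I}\big)$ for the optimal value of the binary quasi-equilibrium problem~\eqref{model:overall:problem} and $V^\circ := F\big((x_i^\circ,y_i^\circ)_{i \in I}\big) + G\big((\zeta_i^\circ)_{i \in I}\big)$ for the optimal value of the social-planner problem~\eqref{twostage:problem}; the claim is exactly $V^* = V^\circ$. One direction is immediate: since the hypotheses of the corollary include Assumptions~\refAss{KKT:conditions}--\refAss{two-stage:comparison}, Theorem~\ref{theorem:eq_compensation} applies verbatim and yields $V^* \leq V^\circ$ in both cases. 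It therefore remains to establish the reverse inequality $V^\circ \leq V^*$ under each of the two case hypotheses, after which antisymmetry of $\leq$ closes the argument.

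For the first case I would assume $\sum_i \zeta_i^\circ = 0$. First, this is precisely the hypothesis of Theorem~\ref{theorem:no_compensation}, which therefore gives $F\big((x_i^\circ,y_i^\circ)_{i \in I}\big) \leq F\big((x_i^*,y_i^*)_{i \in I}\big)$. Next I would dispatch the regularizer term: because each $\zeta_i^\circ \geq 0$ and their sum vanishes, every $\zeta_i^\circ = 0$, so $G\big((\zeta_i^\circ)_{i \in I}\big) = G(\mathbf{0})$; and by Assumption~\refAss{upper:level:convex:quadratic} the map $G$ is strictly increasing in each argument over $\mathbb{R}_+^n$, so $G(\mathbf{0})$ is its minimum there and in particular $G\big((\zeta_i^\circ)_{i \in I}\big) \leq G\big((\zeta_i^*)_{i \in I}\big)$. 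Summing the two inequalities yields $V^\circ \leq V^*$.

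For the second case I would assume $G\big((\zeta_i)_{i \in I}\big) = 0$ identically in both problems. Then $G\big((\zeta_i^\circ)_{i \in I}\big) = G\big((\zeta_i^*)_{i \in I}\big) = 0$, so each objective value reduces to its $F$-term, namely $V^\circ = F\big((x_i^\circ,y_i^\circ)_{i \in I}\big)$ and $V^* = F\big((x_i^*,y_i^*)_{i \in I}\big)$. This is the hypothesis of Theorem~\ref{theorem:free_compensation}, which gives $F\big((x_i^\circ,y_i^\circ)_{i \in I}\big) \leq F\big((x_i^*,y_i^*)_{i \in I}\big)$ and hence $V^\circ \leq V^*$ directly.

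Combining the reverse inequality from either case with $V^* \leq V^\circ$ from Theorem~\ref{theorem:eq_compensation} gives $V^* = V^\circ$, as claimed. I do not expect a genuine obstacle: the corollary is essentially a bookkeeping synthesis of three results already proven, and the only steps requiring care are checking that each invoked theorem's hypothesis is actually triggered by the respective case assumption, and justifying that $G(\mathbf{0})$ is minimal---which rests on the built-in non-negativity $\zeta_i \in \mathbb{R}_+$ together with $\partial G/\partial \zeta_i > 0$ from Assumption~\refAss{upper:level:convex:quadratic}. The slightly delicate point is that Assumption~\refAss{two-stage:comparison} is indispensable for the unconditional direction via Theorem~\ref{theorem:eq_compensation}, so one must confirm it is among the corollary's hypotheses (it is).
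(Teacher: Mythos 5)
Your proposal is correct and follows essentially the same route as the paper, which simply states that the corollary follows by combining Theorems~\ref{theorem:no_compensation} and~\ref{theorem:free_compensation} with Theorem~\ref{theorem:eq_compensation}; you have merely filled in the bookkeeping (in particular the observation that $\zeta_i^\circ=0$ for all $i$ forces $G\big((\zeta_i^\circ)_{i\in I}\big)$ to its minimum under Assumption~\refAss{upper:level:convex:quadratic}) that the paper leaves implicit.
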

	\begin{proof} 
		\begin{footnotesize}
			The proof follows from combining the results of Theorems~\ref{theorem:no_compensation}~and~\ref{theorem:free_compensation} with the results from Theorem~\ref{theorem:eq_compensation}.
		\end{footnotesize}
	\end{proof}
\end{spacing}
In conjunction with Theorems~\ref{theorem:no_compensation}~and~\ref{theorem:free_compensation}, Theorem~\ref{theorem:eq_compensation} has an important implication: if no compensation is necessary at the welfare-optimal solution and the assumptions for the theorem are satisfied, the two solutions coincide. However, if compensation is necessary to mitigate deviation incentives, then the multi-objective program subject to a binary equilibrium can yield a better overall result. This is because the multi-objective program can incorporate the trade-off between welfare and the compensation payments necessary to guarantee a stable Nash equilibrium. 

\subsection{Computational complexity of the binary quasi-equilibrium}\label{sec:model:computational}
Last, let us compare the mathematical complexity of the social-welfare approach (problem~\ref{twostage:problem}) to our method in a linear problem setting, i.e.,~Assumption~\refAss{lower:level:linear} holds and the upper-level objective function is linear. Here, we focus on comparing the number of binary variables in each approach. The number of constraints and continuous variables also influence the computational complexity.

Alas, the exponential increase of complexity in the number of binary variables is a more serious concern than additional constraints or continuous variables in a mixed-integer program, hence our focus on this aspect in the following discussion. 

The two-stage approach following \cite{ONeill_2005_nonconvexities} requires first solving a linear mixed-binary optimization problem with $p\,n$~binary variables (where $p$~is the number of players and $n$~is the number of binary decision variables of each player), and then solving $p$~optimization problems with $n$~binary variables to determine the optimal alternative for each player. 

In contrast, the multi-objective program subject to a binary quasi-equilibrium (problem~\ref{model:overall:problem}) requires to solve a mixed-binary optimization problem with $p \, (1+k) \,2^n$~binary variables, where $k$~is the number of inequality constraints per player. The approach requires $2^n$~binary variables to represent each permutation of binary decisions per player.\footnote{For an illustration of a more general application of the binary equilibrium method, we refer to the natural gas market investment and production game available for download at \url{https://github.com/danielhuppmann/binary_equilibrium}.} The term~$k\, 2^n$~in the number of binary variables is due to the disjunctive constraints reformulation to replace the complementarity conditions, which every player has to consider for all permutations of her options in binary variables. This is necessary to compute the optimal value of the continuous variables for each permutation. 

This curse of dimensionality is similar to the numerical caveats of applying brute-force enumeration strategies to solve binary games, which requires to solve $2^{pn}$~continuous problems. However, we illustrate in the numerical application presented in the following section that the number of binary variables in our method can be significantly reduced depending on the underlying problem. 

In the power market uplift problem, the number of binary variables to obtain an exact solution is only~$(n+k) \, p + l$, where~$l$ is the number of binary variables required for the ISO; this is, in principle, not a substantial increase in computational complexity compared to the~$p \, n$ binary variables in the social-planner problem plus an additional $p$~optimization problems with $n$~binary variables each to determine the compensation payments required for incentive-compatibility to be satisfied for each generator. In short, our approach scales well in the number of players, but not necessarily in the number of binary variables of each player. We discuss further practical aspects on the computational aspects of the problem in Section \ref{sec:example:computation}.

\begin{table}
	\begin{center}
		\begin{small}
			\begin{tabular}{p{0.1 cm}p{1.9 cm}@{ ... }p{8.5 cm}}
				\hline \hline
				\multicolumn{3}{l}{\textbf{Sets \& Mappings}} \\
				&$n,m \in N$	& nodes \\
				&$t \in T$		& time step, hours \\
				&$i \in I$		& generators, power plant units \\
				&$j \in J$		& load, demand units \\
				&$l \in L$		& power lines \\
				&$i \in I_n, j \in J_n$ & generator/load unit mapping to node $n$ \\
				&$n(i), n(j)$ & node mapping to generator $i$/load unit $j$ \\
				&$\phi \in \Phi$	& set of dispatch options (schedules) for each generator \\
				&$t \in T_\phi$	& hours in which a generator is active in dispatch option $\phi$ \\
				\hline
				\multicolumn{3}{l}{\textbf{Primal variables}} \\
				&$x_{ti}$		& on/off decision for generator $i$ in hour $t$ \\
				&$z\on_{ti},z\off_{ti}$ & inter-temporal start-up/shut-down decision \\
				&$y_{ti}$		& actual generation by generator $i$ in hour $t$ \\
				&$y\on_{ti}$		& generation if binary variable is fixed at $\fixedx$ \\
				&$d_{tj}$		& demand by unit $j$ in hour $t$ \\
				&$\delta_{tn}$	& voltage angle \\
				\hline
				\multicolumn{3}{l}{\textbf{Dual variables}} \\
				&$\alpha\on_{ti},\beta\on_{ti}$	& dual to minimum activity/maximum generation capacity \\
				&$\nu_{tj}$			& dual to maximum load constraint \\
				&$\mu^+_{tl},\mu^-_{tl}$ & dual to voltage angle band constraints \\
				&$\xi^+_{tn},\xi^-_{tn}$ & dual to thermal line capacity constraints \\
				&$\gamma_t$	& dual to slack bus constraints \\
				\hline
				\multicolumn{3}{l}{\textbf{Prices, switch value, and compensation variables}} \\
				&$p_{tn}$		& locational marginal price \\
				&$\kappa\on_{ti},\kappa\off_{ti}$	& switch value (defined per time step)\\
				&$\zeta_{i}$ & compensation payment (defined over entire time horizon) \\
				\hline
				\multicolumn{3}{l}{\textbf{Parameters}} \\
				&$c^G_{i}$		& linear generation costs \\
				&$c\on_i,c\off_i$		& start-up/shut-down costs \\
				&$c^D_{\phi}$		& commitment costs in dispatch option $\phi$ (start-up, shut-down)\\
				&$g^{min}_i$		& minimum activity level if power plant is online \\
				&$g^{max}_i$		& maximum generation capacity \\
				&$x^{init}_i$		& power plant status at start of model horizon ($t=0$)\\
				&$u^D_{tj}$			& utility of demand unit $j$ for using electricity \\
				&$d^{max}_{tj}$	& maximum load of unit $j$ \\
				&$f^{max}_l$		& thermal capacity of power line $l$ \\
				&$B_{nk},H_{lk}$			& line/node susceptance/network transfer matrices \\
				\hline \hline
			\end{tabular}
		\end{small}
		\caption{Notation for the nodal power market problem} \label{table:example:notation}
	\end{center}
\end{table}

\section{A binary game: The power market uplift problem} \label{sec:example}
As an illustration of the methodology to solve binary games with compensation, we present a stylized model of a nodal-pricing electricity market comprised of generators with binary decision variables, demand units and a network. 
An upper-level player assigns locational marginal prices to maximize short-run market efficiency \citep[cf.][]{Hobbs_2001_Poolco}. Second, she disburses compensation payments to align the incentives of generators with the overall (societally most beneficial) outcome. 

In contrast to the standard implementation of an Independent System Operator (ISO), the market operator in our setting explicitly takes into account the incentives of generators to deviate from the welfare-optimal dispatch schedule and assigns compensation payments to ensure that the selected solution is incentive-compatible. This is in contrast to the more common market setup, where generators only receive a "no-loss" compensation (guarantee of non-negative profits).

\subsection{The power market model}
The lower level of the market is composed of generators and a player representing consumers (demand) and the network operator. Table~\ref{table:example:notation} provides a summary of the notation used in the example.

\subsubsection*{The generators}
Each generator~$i \in I$ seeks to maximize profits from generating and selling electricity over the time horizon $t \in T$. For consistency with the previous chapter, the generator's optimization problem is written in minimization form:
\begin{subequations} \label{example:generator:optimization}
	\begin{align} 
	\min_{x_{ti},y_{ti},z\on_{ti},z\off_{ti}} &\quad  - p_{tn(i)} y_{ti} + c^G_{i} y_{ti} + c\on_{i} z\on_{ti} + c\off_{i} z\off_{ti}  \label{example:generator:objective} \\
	&\st x_{ti} g_i^{min} \leq y_{ti} \leq x_{ti} g_i^{max} \quad \big(\alpha\on_{ti},\beta\on_{ti}\big)  \label{example:generator:con:generation} \\
	& \hspace{0.8 cm} x_{ti} - x_{(t-1)i} = z\on_{ti} - z\off_{ti} \label{example:generator:con:intertemporal} \\
	& \hspace{1 cm} x_{ti} \in \{0,1\}, \quad y_{ti},z\on_{ti},z\off_{ti}  \in \mathbb{R}_+ \nonumber
	\end{align}
\end{subequations}

The linear generation costs are given by~$c^G_i$, the (binary) start-up costs are given by~$c\on_i$, and the (binary) shut-down costs are given by~$c\off_i$. The operation schedule is denoted by $x_{ti}$, the decision how much electricity to generate and sell to the grid is~$y_{ti}$. The ramping decisions in a particular period are denoted by~$z\on_{ti}$ (start-up) and~$z\off_{ti}$ (shut-down), respectively. The status of a power plant at the beginning of the model horizon~(i.e.,~$x_{0i}$) is given by the parameter~$x^{init}_i$.

The first set of constraints~\eqref{example:generator:con:generation} concerns the maximum generation capacity and the minimum activity level~$(g_i^{min},g_i^{max})$, if the power plant is operating~\mbox{$(x_{ti}=1)$}.
The shadow variables~$(\alpha\on_{ti},\beta\on_{ti})$ are only meaningful given a fixed operation schedule~$x_{ti}$, and we only compute them if the power plant is operational. If the power plant is not switched on, generation is equal to zero; however, due to costs incurred by shutting down the plant (assuming it was operational in the previous period or at the beginning of the model horizon), total profits may be negative even when the plant is not generating electricity. The second constraint~\eqref{example:generator:con:intertemporal} concerns the inter-temporal consideration, i.e., the decision in which time periods the power plant is operational: while the start-up and shut-down variables~$z\on_{ti}$ and~$z\off_{ti}$ are binary in nature, they can be relaxed to positive real numbers without loss of information. Integrality of these variables is automatically enforced by the on/off variables~$x_{ti}$. 

The market clearing price~$p_{tn}$ is the vector of locational marginal prices over time, where~$n(i)$ denotes the node at which generator~$i$ is located; the set~$n,m \in N$ denotes all nodes in the network. The prices arise as the duals of the market clearing condition introduced below, and each generator takes the price at her node as given.
Assuming that the power plant~$i$ is operating in period~$t$, the optimal amount of power generated~$y\on_{ti}$ and the dual variables associated with the constraints can be determined by solving the generator's first-order optimality (KKT) conditions:
\begin{subequations} \label{example:generator:KKT}
	\begin{align} 
	0 = c^G_{i} - p_{tn(i)} + \beta\on_{ti} - \alpha\on_{ti} \quad &\botcomma \quad y\on_{ti} \free \\
	0 \leq - g^{min}_{ti} + y\on_{ti} \quad & \bot \quad \alpha\on_{ti} \geq 0  \label{example:generator:KKT:min}\\
	0 \leq g^{max}_{ti} - y\on_{ti} \quad & \bot \quad \beta\on_{ti} \geq 0 \label{example:generator:KKT:max}
	\end{align}
\end{subequations}

Otherwise, the amount generated is zero, and we do not require the dual variables in that case. Hence, in contrast to the general formulation in Section~\ref{sec:model}, we can omit the KKT conditions in this example for the case that the power plant is not operating in period~$t$.

\subsubsection*{Demand for electricity and network constraints}
The other side of the market is a player seeking to maximize the welfare (utility) of consumers while guaranteeing feasibility of the transmission system, given locational marginal prices~$p_{tn}$.
A set of units~$j \in J$ consume electricity (load~$d_{tj}$), each located at a specific node~$n(j)$. The sets~$I_n$ and~$J_n$ are the generators and load units located at node~$n$, respectively. There are a set of power lines~$l \in L$ connecting the nodes; the direct-current load flow (DCLF) characteristics are captured using the susceptance matrix~$B_{nm}$ (node-to-node) and network transfer matrix~$H_{nl}$ (node-to-line mapping). This approach is equivalent to a power transfer distribution factor (PTDF) matrix. 

This player maximizes the utility of demand from using electricity~$d_{tj}$, where the per-unit utility is given by~$u^D_{tj}$. The first constraint is the upper bound on demand, as a load unit cannot use more than~$d^{max}_{tj}$~units of electricity.

The next constraints~(\ref{example:ISO:optimization:flow:pos},\ref{example:ISO:optimization:flow:neg}) ensure that network flows are feasible and the thermal capacity~$f^{max}_l$ of each power line is observed. Constraints~(\ref{example:ISO:optimization:angle:pos},\ref{example:ISO:optimization:angle:neg}) guarantee that the voltage angle~$\delta_{tn}$ is within the range~$[-\pi,\pi]$. The $B$-$H$-formulation requires to define one arbitrary node as \emph{slack bus}~$\hat{n}$, at which the voltage angle~$\delta_{t\hat{n}}$ is zero by assumption (constraint~\ref{example:ISO:slackbus}). In line with the previous notation, we write the objective function as a minimization problem. 
The term $\sum_{m} B_{nm} \delta_{tm}$ is the net injection of electricity into the grid at node $n$, which depends on the voltage angles $\delta_{tm}$; multiplying this term by the nodal price $p_{tn}$ and summing over all nodes yields the aggregate congestion rents of the system. The market clearing condition (demand, generation, and net injection) will be formally introduced later as a constraint of the upper-level player.
\begin{subequations}\label{example:ISO:optimization}
	\begin{align}
	\min_{d_{tj}, \delta_{tn}} \quad \sum_{j \in J} p_{tn(j)} \big(d_{tj} + \sum_{m \in N} B_{nm} \delta_{tm} \big) - u^D_{tj} d_{tj} & \hspace{3.8 cm} \label{example:ISO:optimization:objective} \\
	\st d^{max}_{tj} - d_{tj} \geq 0 &\quad (\nu_{tj}) \label{example:ISO:optimization:demand:max} \\[4 pt]
	f^{max}_l - \sum_{n \in N} H_{ln} \delta_{tn} \geq 0 &\quad (\mu^+_{tl}) \label{example:ISO:optimization:flow:pos} \\[-2 pt]
	f^{max}_l + \sum_{n \in N} H_{ln} \delta_{tn} \geq 0 &\quad (\mu^-_{tl})\label{example:ISO:optimization:flow:neg} \\[-4 pt]
	\pi - \delta_{tn} \geq 0 &\quad (\xi^+_{tn}) \label{example:ISO:optimization:angle:pos} \\[2 pt]
	\pi + \delta_{tn} \geq 0 &\quad (\xi^-_{tn}) \label{example:ISO:optimization:angle:neg} \\[2 pt]
	\delta_{t\hat{n}} = 0  & \quad (\gamma_t) \label{example:ISO:slackbus}
	\end{align}
\end{subequations}

Because the decision variables of demand and voltage angle are continuous, this problem can be solved simultaneously with the generators' problems using first-order optimality conditions:
\begin{subequations}\label{example:ISO:KKT}
	\begin{align}
	0 \leq - u^D_{tj} + p_{tn(j)} + \nu_{tj} &\quad \bot \quad d_{tj} \geq 0 \label{example:ISO:KKT:demand} \\[2 pt]
	0 = \sum_{m\in N} B_{mn} p_{tm} 
	+ \sum_{l \in L} H_{ln} \big( \mu^+_{tl} - \mu^-_{tl}\big) \quad & \nonumber \\[- 4 pt]
	+ \xi^+_{tn} - \xi^-_{tn} - 
	\genfrac\{\}{0pt}{0}{\gamma_t \quad \text{if }n = \hat{n}}{0 \quad \text{else} \hspace{0.55 cm}}
	& \quad \botcomma \quad \delta_{tn} \free \label{example:ISO:KKT:delta} \\
	0 \leq d^{max}_{tj} - d_{tj} &\quad \bot \quad \nu_{tj} \geq 0 \label{example:ISO:KKT:demand:max} \\
	0 \leq f^{max}_l - \sum_{n \in N} H_{ln} \delta_{tn} &\quad \bot \quad \mu^+_{tl} \geq 0 \label{example:ISO:KKT:flow:pos} \\
	0 \leq f^{max}_l + \sum_{n \in N} H_{ln} \delta_{tn} &\quad \bot \quad  \mu^-_{tl} \geq 0 \label{example:ISO:KKT:flow:neg} \\
	0 \leq \pi - \delta_{tn} &\quad \bot \quad \xi^+_{tn} \geq 0 \label{example:ISO:KKT:angle:pos} \\
	0 \leq \pi + \delta_{tn} &\quad \bot \quad \xi^-_{tn} \geq 0 \label{example:ISO:KKT:angle:neg} \\[2 pt]
	0 = \delta_{t\hat{n}}  &\quad \botcomma \quad \gamma_t \free \label{example:ISO:KKT:slackbus}
	\end{align}
\end{subequations}

\subsubsection*{The upper-level market operator}
As introduced in Section~\ref{sec:model}, an upper-level player acts as equilibrium coordination mechanism: she sets a short-term locational marginal price~$p_{tn}$ resulting from the market-clearing condition (linking constraint across players) as well as compensation payments to align the incentives of market participants and ensure that no player has a profitable deviation. Mathematically, this player forms the upper level of a two-stage, hierarchical game; the lower level is the binary quasi-equilibrium between the generators and the demand/network player.

The upper-level player's objective function is closely related to the generators and the demand/network player, but not identical -- the market operator does not only consider short-term market efficiency (i.e., the sum of all players' objective functions), but includes the welfare loss from the disbursement of compensation payments~$\zeta_i$. The objective function satisfies Assumptions~\refAss{upper:level:convex:quadratic} and~\refAss{two-stage:comparison}, and the individual player's problems satisfy Assumptions~\refAss{KKT:conditions} and~\refAss{lower:level:linear}.
\begin{subequations}
	\begin{align}
	\min \ \sum_{t \in T} \bigg[&
	\sum_{i \in I} c^G_{i} y_{ti} + c\on_{i} z\on_{ti} + c\off_{i} z\off_{ti}
	- \sum_{j \in J} u^D_{tj} d_{tj} \label{example:MO:optimization}
	\bigg] 
	+ \sum_{i \in I} \zeta_i \\
	\st & \sum_{j \in J_n} d_{tj} - \sum_{i \in I_n} y_{ti} + \sum_{m \in N} B_{nm} \delta_{tm} = 0 \label{example:MO:MBC} \\ 
	&\text{KKT conditions of demand and network feasibility (equations \ref{example:ISO:KKT})} \nonumber \\
	&\text{KKT conditions of the generators (equations \ref{example:generator:KKT}}) \nonumber \\
	&\text{binary equilibrium between generators (equations \ref{example:BNE:generator}, specified below)} \nonumber 
	\end{align}
\end{subequations}

The first part of the objective function is the sum of the generators' incurred costs and the utility of load units from using electricity; this is equivalent to~$F(\cdot)$ in the theoretical formulation (problem~\ref{model:overall:problem}). The second part is the regularizer~$G(\cdot)$, although it has a distinct interpretation in this example: because the compensation payments to generators have to be funded through fees on market participants or from general taxation, they usually involve some efficiency loss from market distortions.

The market operator sets a spot price $p_{tn}$, which is considered by the generators and the demand/network player in their respective optimization problem, such that the nodal energy balance constraint \mbox{\ref{example:MO:MBC}} is satisfied.

Let us now turn to the equations necessary to guarantee the binary equilibrium between the generators. If~$x_{ti}=0$ (i.e., the power plant of generator~$i$ is switched off in period~$t$), the first-order conditions can be omitted altogether; both the generation level and the short-term profits in this case are zero, and the fixed costs from starting up or shutting down will be included in the incentive-compatibility constraint. This leaves the KKT conditions of the generators (equations~\ref{example:generator:KKT}) to determine the optimal, short-term dispatch in the case that the generator is operating in this period ($x_{ti}=1$). The inter-temporal constraint of the power plant operation status has to be considered (constraint~\ref{example:BNE:generator:intertemporal}); the start-up and shut-down variables~$\big(z_{ti}\on,z_{ti}\off\big)$ are determined by the unit commitment variables~$x_i$. 

Next, let us turn to the incentive-compatibility constraint: we now have multiple time periods and we formulate the incentive-compatibility constraint in a different way than in the general formulation in Section~\ref{sec:model} (equation~\ref{model:overall:problem:incentive}). This is due to the problem that it is not obvious how to allocate start-up costs over multiple time periods. It is therefore preferable to define the compensation payments $\zeta_i \in \mathbb{R_+}$ over the entire model horizon and also write the incentive-compatibility constraint in this way, rather than as a period-by-period constraint. As a consequence, we also change the interpretation of~$\big(\kappa\on_{ti},\kappa\off_{ti}\big) \in \mathbb{R}$: it now represents the short-term profits or losses (revenue less generation costs), and it is not any more restricted to positive values, in contrast to the switch value in the previous section and the overview in Table~\ref{table:incentive_alignment}. 

\begin{subequations} \label{example:BNE:generator}
	\begin{align}
	x_{(t-1)i} + z\on_{ti} - z\off_{ti} &= x_{ti} \label{example:BNE:generator:intertemporal} \\
	\beta\on_{ti} g^{max}_{ti} - \alpha\on_{ti} g^{min}_{ti} - \kappa\on_{ti} + \kappa\off_{ti} &=0 \label{example:BNE:generator:profits} \\
	|\kappa\on_{ti}| &\leq x_{ti} \, \widetilde{K} \label{example:BNE:kappa_on} \\
	|\kappa\off_{ti}| &\leq (1-x_{ti}) \, \widetilde{K} \label{example:BNE:kappa_off} \\
	\sum_{t \in T} \bigg[\kappa\on_{ti}
	- c\on_{i} z\on_{ti} -  c\off_{i} z\off_{ti}  \bigg] + \zeta_{i}
	&\geq \sum_{t \in T_\phi} \bigg[\beta\on_{ti} g^{max}_{ti} - \alpha\on_{ti} g^{min}_{ti} \bigg]
	- c^D_{\phi i} \quad \nonumber \\ & \hspace{4.5 cm} \forall~\phi \in \Phi \label{example:BNE:generator:incentive}
	\end{align}
	The last constraint (equation~\ref{example:BNE:generator:incentive}) is the  incentive-compatibility condition: it ensures that the profits for each generator (per period, short-term profits or losses from generating less the commitment costs) in the actual market outcome plus the compensation (left-hand side) are greater than the profits which that player could earn in any other dispatch schedule~$\phi$ (right-hand side). The revenues for each dispatch schedule can be computed from the duals to the maximum generation and minimum activity constraints, summing over the periods in which the generator is operational according to schedule~$\phi$; these periods are collected in the set~\mbox{$T_\phi \subseteq T$}. The total commitment costs for each dispatch schedule are denoted by~$c^D_{\phi i}$.
	
	The final set of constraints of the binary quasi-equilibrium ``translates'' the optimal generation decision for both states of the binary variable $(y\on_{ti},0)$ into the generation level which is actually realized in equilibrium, $y_{ti}$.
	\begin{align}
	0 &\leq y_{ti} \leq x_{ti} g^{max}_{ti} \\
	y\on_{ti} - (1-x_{ti}) \, g^{max}_{ti} &\leq y_{ti} \leq y\on_{ti} + (1-x_{ti}) \, g^{max}_{ti}
	\end{align}
\end{subequations}
As stated in the previous section, the binary variables have an additional role in this formulation relative to a standard unit-commitment model: rather than simply stating whether a plant is operating or not, they control which of the two potential states with regard to the continuous variables are active and realized in equilibrium~($y_{ti}=y\on_{ti}$ if~$x_{ti}=1$, $y_{ti}=0$ otherwise). Furthermore, it ensures that the variables capturing the short-term profit~$\big(\kappa\on_{ti},\kappa\off_{ti}\big)$ are correctly assigned. 

\subsection{Alternative rules for compensation payments}\label{sec:example:alternative}
The incentive-compatibility constraint as stated above (equation~\ref{example:BNE:generator:incentive}) is the direct extension of constraint~\eqref{model:incentive:compatibility:reformulated} in a multi-period setting. The short-term profits or losses are succinctly captured by the vector~$\big(\kappa\on_{ti},\kappa\off_{ti}\big) \in \mathbb{R}$, and the start-up and shut-down costs are linear terms. As a consequence, this method is flexible and allows to easily implement a wide range of market rules regarding compensation disbursements. To illustrate the versatility of the approach and its applicability to different market designs, we formulate two alternative versions of the model.

First, we implement a \emph{no-loss} rule to replace the incentive-compatibility constraints: no generator may earn negative profits (i.e., lose money out of pocket):
\begin{align*}
\sum_{t \in T} \big[\kappa\on_{ti}
- c\on_{i} z\on_{ti} - c\off_{i} z\off_{ti} \big] + \zeta_{i}
&\geq 0 \tag{\ref*{example:BNE:generator:incentive}'} \label{example:BNE:generator:incentive:noloss}
\end{align*}
In this setting, there is no constraint stating that the dispatch selected by the ISO has to be incentive-compatible for each generator. Instead, every market participant is forced to follow the schedule selected by the market operator. Hence, this setting omits the game-theoretic considerations. Furthermore, the power plant is compensated even if it would incur losses irrespective of the selected dispatch, so there may be over-compensation. This can happen in our setting because there are shut-down costs and some generators are operational at the beginning of the model horizon.

The second rule stipulates that only power plants can receive compensation payments if they were active at least once over the model horizon; this is to reflect the potential concern that no generator should receive compensation for doing nothing.
\begin{align*}
\zeta_{i}&\leq \sum_{t \in T} x_{ti}\, \widetilde{K} \tag{\ref*{example:BNE:generator:incentive}''} \label{example:BNE:generator:comp4act}
\end{align*}

\subsection{Illustrative results}
The power system adapted from \cite{Gabriel_2013_DCMLCP} consists of 6~nodes, with 9~generators and 4~load units (see Figure~\ref{figure:6_node_network}). Each generator has a maximum generation capacity of $100\,$MW and a minimum generation level, if operating, of $50\,$MW. Because we assume that one time period $t$ lasts for one hour, one unit of capacity~(MW) corresponds to one unit of energy~(MWh). Generators~$g3$ to~$g6$ are operational at the beginning of the model horizon and the power plants differ with regard to start-up, shut-down, and marginal generation costs. Demand for electricity varies over time, with a high utility for energy (or large willingness-to-pay, WTP) in the first hour and lower WTP in the second hour. All data for generators and load units are provided in Table~\ref{table:data}. Regarding the multi-objective function representing the market operator, we assume that each monetary unit paid in compensation is a one-for-one loss of welfare (sum of consumer utility, generator profits, and congestion rents). All lines have a thermal capacity of~$300\,$MW, except for the two inter-connector lines~$n2-n4$ and~$n3-n6$, which have a reduced thermal capacity of~$20\,$MW. Due to these bottlenecks, the standard, welfare-optimal unit commitment model yields losses for some generators.

\begin{figure}[b]
	\begin{center}
		\includegraphics[width= 0.7 \textwidth]{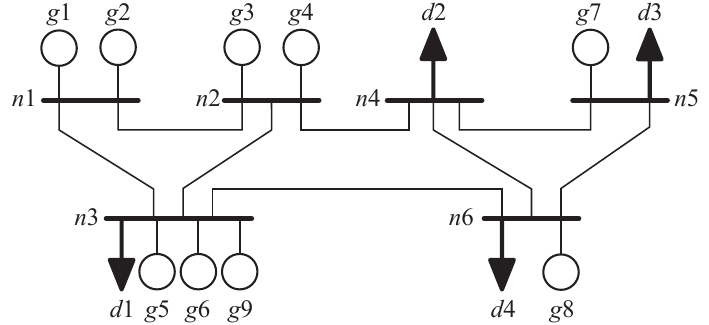}
	\end{center}
	\caption{6-node network, adapted from \cite{Gabriel_2013_DCMLCP},~p.~18}
	\label{figure:6_node_network}
\end{figure}

We compare three different \emph{market rule cases}; the game-theoretic considerations and regulations (constraints) concerning losses and compensations are repeated here for clarity:
\begin{itemize}
	\item[\textbf{Binary equilibrium:}] Every generator receives compensation such that she has no profitable deviation. The constraint is repeated here for easier comparison with the other market designs.
	\begin{align*}
	\sum_{t \in T} \bigg[\kappa\on_{ti}
	- c\on_{i} z\on_{ti} -  c\off_{i} z\off_{ti}  \bigg] + \zeta_{i}
	\geq \sum_{t \in T_\phi} \bigg[\beta\on_{ti} g^{max}_{ti} - &\alpha\on_{ti} g^{min}_{ti} \bigg]  - c^D_{\phi i} \nonumber \\
	& \forall~i\in I, \phi \in \Phi \tag{cf.~\ref*{example:BNE:generator:incentive}} 
	\end{align*}
	
	\item[\textbf{No-loss rule:}] We solve the multi-objective program (problem~\ref{example:MO:optimization}) subject to the constraint that no player earns negative profits (instead of constraint \ref{example:BNE:generator:incentive}). Players may have profitable deviations, for which they are not compensated.
	\begin{align*}
	\sum_{t \in T} \big[\kappa\on_{ti}
	- c\on_{i} z\on_{ti} - c\off_{i} z\off_{ti} \big] + \zeta_{i}
	&\geq 0  \quad \forall~i\in I \tag{cf.~\ref*{example:BNE:generator:incentive}'} 
	\end{align*}
	
	Incidentally, this case yields identical results as the \emph{standard approach}, in which the power market model is solved according to the two-stage procedure following \cite{ONeill_2005_nonconvexities}, i.e., pay-offs are calculated based on prices from the dual to the energy-balance constraint from the linearized problem, and non-confiscatory make-whole payments are computed ex-post. However, the observation that the two-stage procedure and the integrated multi-objective yield an identical result is specific to this stylized example, and not a general property of the multi-objective program under a no-loss rule.
	
	\item[\textbf{No-loss \& active:}] We add a constraint to the previous case \emph{No-loss rule} stating that only active generators can receive compensation.
	\begin{align*}
	\zeta_{i}&\leq \sum_{t \in T} x_{ti}\, \widetilde{K} \quad \forall~i\in I \tag{cf. \ref*{example:BNE:generator:incentive}''}
	\end{align*}
	
	The rationale for this rule may be to prevent gaming, or in response to the public perception that there should not be payments for \emph{not} providing a service.
\end{itemize}

\begin{table}[b]
	\begin{center}
		\subfloat[Cost for generation and operation status change, initial operational status]{
			\begin{small}
				\begin{tabular}[b]{l|ccc|c}
					& 	$c^G_i$ &	$c\on_i$ &	$c\off_i$ & 
					$x^{init}$ \\ 
					& \scriptsize\$/MWh
					& \scriptsize\$ & \scriptsize\$ \\
					\hline \hline
					$g1$ &	$24$ & 	$100$ &	$500$ & $0$ \\
					$g2$ &	$22$ & 	$140$ &	$350$ & $0$ \\
					$g3$ &	$20$ &	$180$ &	$300$ & $1$ \\
					$g4$ &	$18$ &	$220$ &	$250$ & $1$ \\
					$g5$ &	$16$ &	$250$ &	$220$ & $1$ \\
					$g6$ &	$14$ &	$300$ &	$180$ & $1$ \\
					$g7$ &	$12$ &	$350$ &	$140$ & $0$ \\
					$g8$ &	$10$ &	$500$ &	$100$ & $0$ \\
					$g9$ &	$14$ &	$105$ &	$100$ & $0$
				\end{tabular}
		\end{small} }
		\quad
		\subfloat[b][Utility and maximum demand by load unit and time period]{\begin{small}
				\begin{tabular}[b]{l|cc|cc}
					& 	$u^D_{1j}$ & $u^D_{2j}$ & $d^{max}_{1j}$ & $d^{max}_{2j}$ \\ 
					& \scriptsize\$/MWh & \scriptsize\$/MWh 
					& \scriptsize MW & \scriptsize MW \\
					\hline \hline
					$d1$ &	$25$ & 	$20$ &	$100$ &	$50$ \\
					$d2$ &	$26$ & 	$20$ &	$100$ &	$50$ \\
					$d3$ &	$26$ &	$21$ &	$100$ &	$50$ \\
					$d4$ &	$17$ &	$21$ &	$100$ &	$50$
				\end{tabular}
		\end{small} }
	\end{center}
	\caption{Data for generators and load} \label{table:data}
\end{table}

Table~\ref{table:rent} summarizes the rents earned by the generators in the three market rule cases, as well as the required compensation. Table~\ref{table:iso} shows the resulting nodal prices as well as the amount generated and consumed by each unit. Generators~ $g1$ and~$g2$ are not operational in any of these cases; therefore, they are omitted from the tables wherever all entries are zero.

\begin{table}[t]
	\begin{small}
		\begin{center}
			\begin{tabular}{lc|c@{~}R{0.9 cm}@{~}R{0.7 cm}|c@{~}R{0.9 cm}@{~}R{0.7 cm}|c@{~}R{0.9 cm}@{~}R{0.7 cm}}
				\multicolumn{2}{c|}{} &
				\multicolumn{3}{c|}{No-loss rule} &
				\multicolumn{3}{c|}{Binary equilibrium} &
				\multicolumn{3}{c}{No-loss \& active} \\
				& $x^{init}_i$ & $(x_{ti})$ & $\pi_i$ & $\zeta_i$ &
				$(x_{ti})$ & $\pi_i$ & $\zeta_i$ &
				$(x_{ti})$  & $\pi_i$ & $\zeta_i$ \\
				\hline \hline
				$g3$ & $1$ & $(0,0)$ & $-300$ & $300$ & $(0,0)$ & $-300$ & $15$ & $(1,1)$ & $-525$ & $525$ \\
				$g4$ & $1$ & $(1,1)$ & $-160$ & $160$ & $(0,0)$ & $-250$ & $65$ & $(1,0)$ & $-335$ & $335$ \\
				$g5$ & $1$ & $(1,1)$ & $50$ & & $(1,1)$ & $-50$ & & $(1,1)$ & $-50$ & $50$ \\
				$g6$ & $1$ & $(1,1)$ & $200$ & & $(1,1)$ & $100$ & & $(1,1)$ & $100$ \\
				$g7$ & $0$ & $(1,1)$ & $690$ & & $(1,1)$ & $715$ & & $(1,1)$ & $690$ \\
				$g8$ & $0$ & $(1,1)$ & $680$ & & $(1,1)$ & $730$ & & $(1,1)$ & $550$ \\
				$g9$ & $0$ & $(0,0)$ & 			  & & $(1,1)$ & $-5$ & $5$  & $(0,0)$ &  \\
				\hline \hline
				\multicolumn{2}{l|}{Generator profit} & & $1160$ & $460$ &  & $940$ & $85$ & & $435$ & $910$ \\
				\multicolumn{2}{l|}{Consumer surplus} & & $1380$ & & & $1480$ & & & $1830$ \\
				\multicolumn{2}{l|}{Congestion rent} & & $560$ & & & $640$ & & & $740$ \\
				\hline \hline
				\multicolumn{2}{l|}{Total welfare} & & $3100$ & $460$ & & $3060$ & $85$ & & $3005$ & $910$ \\
			\end{tabular}
		\end{center}
	\end{small}
	\caption{Profits by generator ($\pi_i$) before compensation is disbursed, and rents by stakeholder group for each case; on-off status and initial status by generator} \label{table:rent}
\end{table}

\begin{table}[t]
	\begin{small}
		\begin{center}
			\begin{tabular}{l@{~}c|R{1.3 cm}R{0.8 cm}|R{1.3 cm}R{0.8 cm}|R{1.3 cm}R{0.8 cm}}
				\multicolumn{2}{c|}{ } &
				\multicolumn{2}{c|}{No-loss rule} &
				\multicolumn{2}{c|}{Binary equilibrium} &
				\multicolumn{2}{c}{No-loss \& active} \\
				& &	t1 \tab &	t2  \tab &	t1 \tab &	t2 \tab &	t1 \tab &	t2 \tab \\
				\hline \hline
				Price &	n1 &	18 \tab &	12.8 &	16.5 &	12.8 &	13.5 &	12.8 \\
				&	$n2$ &	18 \tab &	11.6 &	17 \tab &	11.6 &	11 \tab &	11.6 \\
				&	$n3$ &	18 \tab &	14 \tab &	16 \tab &	14 \tab &	16 \tab &	14 \tab \\
				&	$n4$ &	26 \tab&	20 \tab &	26 \tab &	20 \tab&	28.5 &	20 \tab \\
				&	$n5$ &	26 \tab &	18.8 &	26 \tab &	18 \tab &	26 \tab &	18.8 \\
				&	$n6$ &	26 \tab &	17.6 &	27 \tab &	17 \tab &	23.5 &	17.6 \\
				\hline \hline
				Generation 
				&	$g3$ &		 		&	 	  		&				&				& 25 \tab &	25 \tab \\
				&	$g4$ &	40 \tab &	25 \tab &				&				 &	25 \tab &	 \\
				&	$g5$ &	50 \tab &	25 \tab &	40 \tab &	25 \tab &	37.5 &	25 \tab\\
				&	$g6$ &	50 \tab &	30 \tab &	50 \tab &	25 \tab &	50 \tab &	30 \tab \\
				&	$g7$ &	50 \tab &	50 \tab &	50 \tab &	50 \tab &	50 \tab &	50 \tab \\
				&	$g8$ &	50 \tab &	50 \tab &	50 \tab &	50 \tab &	50 \tab &	50 \tab \\
				&	$g9$ &				&				 &	50 \tab & 	40 \tab & 				& \\
				\hline \hline
				Load 
				&	$d1$ &	100 \tab &	50 \tab &	100 \tab &	50 \tab &	100 \tab &	50 \tab \\
				&	$d2$ &	10 \tab &	30 \tab &	65 \tab &	40 \tab &	&	30 \tab \\
				&	$d3$ &	30 \tab &	50 \tab &	 	\tab &	50 \tab &	37.5 &	50 \tab \\
				&	$d4$ &	100 \tab &	50 \tab &	75 \tab &	50 \tab &	100 \tab &	50 \tab \\
				\hline \hline
				Total dispatch & &	240 \tab &	180 \tab &	240 \tab &	190 \tab &	237.5 &	180 \tab
			\end{tabular}
		\end{center}
	\end{small}
	\caption{Solution by market setup (prices in \$/MWh, dispatch in MWh)} \label{table:iso}
\end{table}

\begin{sidewaystable}
	\begin{small}
		\begin{center}
			\begin{tabular}{l|rrrrr|rrrrr|rrrrr}
				&
				\multicolumn{5}{c|}{No-loss rule} &
				\multicolumn{5}{c|}{Binary equilibrium} &
				\multicolumn{5}{c}{No-loss \& active} \\
				& $(0,0)$ &	$(1,0)$ & 	$(0,1)$ &	$(1,1)$ &	$\zeta_i$ & $(0,0)$ &	$(1,0)$ & 	$(0,1)$ &	$(1,1)$ &	$\zeta_i$ & $(0,0)$ &	$(1,0)$ & 	$(0,1)$ &	$(1,1)$ &	$\zeta_i$ \\
				\hline \hline
				$g1$ &	$\mathbf{0}$	&	$-750$ &	$-380$ &	$-530$ & 	&	 $\mathbf{0}$ &	$-787$ &	$-380$ &	$-567$ &	 &	$\mathbf{0}$ &	$-862$ &	$-380$ &	$-642$ & \\
				$g2$ & $\mathbf{0}$		&	$-590$ &	$-370$ &	$-470$ &	&	$\mathbf{0}$ &	$-627$ &	$-370$ &	$-507$ &	 &	 $\mathbf{0}$ &	$-702$ &	$-370$ &	$-582$ & \\
				$g3$ &	$\mathbf{-300}$ &	$-350$ &	$-690$ &	$-260$ &	$300$ &	$\mathbf{-300}$ &	$-375$ &	$-690$ &	$-285$ &	$15$ &	$-300$ &	$-525$ &	$-690$  &	$\mathbf{-435}$ & $435$ \\
				$g4$ &	$-250$ &	$-250$ &	$-630$ &	$\mathbf{-160}$ &	$160$ &	$\mathbf{-250}$ &	$-275$ &	$-630$ &	$-185$ &	$65$ &	$-250$ &	$\mathbf{-425}$ &	$-630$ &	$-335$ &	$425$ \\
				$g5$ &	$-220$ &	$-120$ &	$-520$ &	$\mathbf{50}$ &	 &	$-220$ &	$-220$ &	$-520$ &	$\mathbf{-50}$ &	 & $-220$ &	$-220$ &	$-520$ &	$\mathbf{-50}$ &	$50$ \\
				$g6$ &	$-180$ &	$20$ &	$-480$ &	$\mathbf{200}$ &	 & $-180$ &	$-80$ &	$-480$ &	$\mathbf{100}$ &	 &	$-180$ &	$-80$ &	$-480$ &	$\mathbf{100}$ &	 \\
				$g7$ &	 &	$210$ &	$-10$ &	$\mathbf{690}$ &	 &	 &	$235$ &	$-10$ &	$\mathbf{715}$ &	 &	 &	$210$ &	$-10$ &	$\mathbf{690}$ &	 \\
				$g8$ &	 &	$200$ &	$-120$ &	$\mathbf{680}$ &	 &	 &	$250$ &	$-120$ &	$\mathbf{730}$ &	 &	 &	$75$ &	$-120$ &	$\mathbf{555}$ &	 \\
				$g9$ &	$\mathbf{0}$ &	$-5$ &	$-105$ &	$95$ &	 &	 &	$-105 $&	$-105$ &	$\mathbf{-5}$ &	$5$ & $\mathbf{0}$ 	 &	$-105$ &	$-105$ & \\
			\end{tabular}
		\end{center}
	\end{small}
	\caption{Profits by generator for each on/off option, and compensation (in \$); market outcome in bold} \label{table:deviation}
\end{sidewaystable}

For reasons of illustration, we first discuss the results for the \emph{No-loss} case; as stated before, this yields the same outcome as the method proposed by \citet{ONeill_2005_nonconvexities} with ex-post compensation in this example. Generator~$g3$ is switched off immediately and therefore incurs shut-down costs of~$\$\,300$, while generator~$g4$ is operating, but the nodal price at node~$n2$ is below her marginal costs. Both of these generators are made whole such that they do not incur losses. The deviation incentives of each player are shown in Table~\ref{table:deviation}; here, one can see that generator~$g9$ has a profitable deviation given market prices in the no-loss case. As a consequence, this generator has an incentive to switch on the power plant in spite of the schedule announced by the ISO, as she would earn positive profits given the prevailing market prices. Hence, if self-scheduling is an option for the generator, this outcome would not be a Nash equilibrium and the solution would not be stable against deviations.

In the \emph{binary-equilibrium} case, the market operator could choose to compensate generator~$g9$ to counteract self-scheduling, and then obtain the same dispatch and nodal prices as in the no-loss case. This would require to compensate generator~$g9$ to the tune of $\$\,95$ and generator~$g3$ with $\$\,40$. Generator~$g4$ does not have a profitable deviation, because her losses are at least as great under all alternative dispatch schedules; hence, she does not receive compensation. The objective value of the market operator would be $\$\,2965$ (total welfare of~$\$\,3100$ less $\$\,135$ disbursed as compensation).

However, because of the integrated consideration of market efficiency and compensation payments, the market operator realizes that it is preferable to dispatch generator~$g9$ and instead shut down generator~$g4$, realizing an objective value of \$$\,2975$. Generator~$g9$ now incurs losses, because the resulting locational marginal prices given the new dispatch are lower than her marginal costs; for these losses, she is compensated by the market operator to prevent her from leaving the market. Overall, market efficiency is slightly reduced, but the compensation required to maintain incentive compatibility in this outcome is significantly lower than the payments necessary to guarantee incentive compatibility of the welfare-optimal solution.

The last case, \emph{No-loss \& active}, illustrates how strict market rules can hamper efficient market operation, even when they are intended to mitigate strategic behavior or increase public acceptance of compensation payments. Generator~$g3$ would incur losses from shutting down at the beginning of the period, but cannot receive compensation if she doesn't generate at least in one period; therefore, the market operator dispatches this plant throughout the model horizon. Because this would result in infeasible flows on the network in the second period, the market operator shuts down generator~$g4$ at the end of the first period. Now, this generator incurs losses from generating at a nodal price below marginal costs in the first period and the shut-down costs, and is compensated accordingly.

In this case, welfare is reduced by $3\,$\% compared to the \emph{No-loss} case, while the required compensation payments are almost twice as high. Assuming that every dollar disbursed in compensation payments is assumed to be a $100\,$\% loss, the overall welfare in the system is $30\,$\% lower than in the best-possible outcome. 

\subsection{Numerical implementation and a note on computation} \label{sec:example:computation}
Reformulating the complementarity conditions of the demand-side player~(equations~\ref{example:ISO:KKT}) and the generators (equations~\ref{example:generator:KKT:min} and~\ref{example:generator:KKT:max}) using disjunctive constraints yields a mixed-integer linear program \citep{Fortuny-Amat_1981}. This approach to determine a binary quasi-equilibrium requires $3 \, |T| \; |I| = 54$~binary variables for the generators and $2 \, |T| \, \big( |J| + |L| + |N| -1\big)=68$~binary variables for the disjunctive-constraints reformulation of the ISO. The total number of binary variables is therefore~$|T| \, \big( 2 \, (|I| + |J| + |L| + |N| -1) + |I| \big) =122$. The large scalars~$\widetilde{K}$ for the disjunctive constraints reformulation and the constraints on assigning the correct values~\smash{$\kappa_{ti}\on,\kappa_{ti}\off$} (equations~\ref{example:BNE:kappa_on} and~\ref{example:BNE:kappa_off}) were set to~$1000$.

This compares to $|T| \; |I| = 18$~binary variables to compute the welfare-optimal dispatch and integer pricing following the approach proposed by \cite{ONeill_2005_nonconvexities}. Our method is therefore more computationally expensive, but the number of binary variables increases only linearly in the number of time periods, and sublinear in the number of generators, load units, nodes and lines. Instead solving the resulting equilibrium problem for every permutation of binary variables and checking for profitable deviations ex-post (i.e., the brute-force approach) grows exponentially in complexity and requires solving \mbox{$2^{|T| \; |I|}> 262,000$~(linear)} problems.

The numerical model presented in this section is implemented in GAMS and solved using the GUROBI solver. The code includes the possibility of multiple bidding blocks for each generator and demand unit, as in the model formulated by~\cite{Gabriel_2013_DCMLCP}, even though this option is not used here. The GAMS code is published under a \emph{Creative Commons Attribution 4.0 International License} and is available for download at~\url{https://github.com/danielhuppmann/binary_equilibrium}. An algorithm for enumerating all permutations and checking for deviation incentives ex-post was also implemented to verify the accuracy of our methodology.

\section{Conclusions and outlook} \label{sec:conclusion}
Non-cooperative games with binary decision variables are often encountered in real-world applications, from engineering to economics. It is well known that equilibria in such problems do not necessarily exist, and even if they do exist, finding them is mathematically challenging. The most frequently studied example is the power market uplift problem, which seeks to reconcile the difficulty of finding market-clearing prices -- based on the short-term, efficient dispatch -- with obtaining incentive-compatible outcomes in decentralized, non-cooperative markets. To date, no approach to exactly solve such games exists. 

In this work, we propose an exact solution method for binary equilibrium problems based on computing optimal responses for each player for both values of the binary variable (or vector), rather than assuming a continuous relaxation of binary variables or relaxing the optimality conditions. We then add an explicit incentive-compatibility constraint to ensure that no player has a profitable deviation. We define the notion of a binary quasi-equilibrium to describe situations where no equilibrium exists for the original problem, but in which compensation payments can align the incentives of players and a stable equilibrium is realized.

To this end, we introduce a market operator that acts as an equilibrium selection mechanism according to an upper-level objective function. By recasting the binary equilibrium problem as a hierarchical multi-objective program subject to a binary quasi-equilibrium, our method allows to explicitly incorporate the trade-off between market efficiency and the budget required for compensation payments to obtain an incentive-compatible market outcome. With regard to the power market, this can be interpreted as striking a balance between maximizing short-run welfare (consumer utility less generation costs) and the amount of uplift payments, which are usually funded through taxation, usage fees, or price mark-ups. 

Instead of using the shadow price from a marginal relaxation of the integral constraint, our method yields a ``switch value,'' which is the loss a player would incur if she were to deviate from her individually optimal decision. The switch value can also be readily interpreted as the compensation payment a player should receive if the market operator requires her to deviate from the individually optimal strategy. The switch value can be used for algorithmic improvements and new approaches to solve binary problems (e.g., act as a stopping criterion, guide a branch-and-bound algorithm).

Most importantly, we show that this method can be reformulated and solved as a mixed-binary linear program under general conditions, and that the solution is at least as good as the current practice under certain assumptions. Hence, the approach can be applied to a wide range of real-world problems, including Nash equilibria in energy and natural resource markets. The approach allows to include a variety of market regulations, such as ``no-loss'' rules common in power markets. These rules can be formulated as linear constraints and therefore do not substantially increase the numerical complexity of obtaining a binary quasi-equilibrium. 

The solution method for binary equilibrium problems proposed in this work can be extended to include Generalized Nash games \citep[cf.][]{Harker_1991} or games with individual joint constraints \citep{Nabetani_2011_GNE}, as well as equilibrium problems in discrete rather than binary variables. Furthermore, more general non-cooperative games can be solved in this framework, such as games based on conjectural variations \citep{Wogrin_2013_math_programming}.  Extending the approach to stochastic applications is also straightforward.

One caveat of the proposed method is that the reformulation as a binary optimization program with linear constraints is not directly applicable to market settings where players are aware of their own impact on the final demand price (i.e., Nash-Cournot equilibrium models). The properties of this method in larger-scale applications also requires further investigation to better understand computation time scaling and numerical characteristics.

The market rules obviously differ across various fields. A topic that has received far less attention from the applied Operations Research community is agriculture. Yet it is also a form of binary game between independent farmers (or agro-commercial enterprises), which decide whether to plant a field or not. Interestingly, and in contrast to power markets, there are subsidies being paid in many countries to farmers that let some of their land lay idle. Hence, there is a payment for doing nothing! The key difference may be that farming is seen as a market less prone to exertion of strategic behavior, and that letting a field lay idle is still seen to provide a benefit beyond mere price support, by maintaining the landscape or providing breeding grounds for wildlife -- public acceptance of support schemes may be higher in these cases.

In economic applications, the binary variables can be interpreted as on/off decisions, or as market-entrance or investment decisions in a dynamic, two-stage setting. Lumpy investment in the European power grid capacity by national regulators, each seeking to shift rents towards their domestic constituents, is a natural next application of the binary equilibrium method \citep[cf.][]{Huppmann_2015_natstrat_investment}. The European regulatory agency ACER and the inter-TSO compensation mechanism under its purview are very similar to the structure of the proposed method, where an upper-level coordinator guides non-cooperative players with binary investment decisions.

Because most current power markets are based on a welfare-optimal dispatch with ex-post compensation payments, there may exist numerous gaming opportunities for large market participants in the current setting; the settlement in 2013 between the Federal Energy Regulatory Commission (FERC) and JP Morgan regarding manipulative bidding strategies is a case in point\footnote{See FERC's news release from July 30, 2013, Docket Nos.~IN11-8-000 and~IN13-5-000.\\ \url{http://www.ferc.gov/media/news-releases/2013/2013-3/07-30-13.asp}}. It will be the subject of future research to analyze whether our solution method for binary equilibrium problems will be more or less prone to market power exertion and strategic behavior. Our method explicitly incorporates game-theoretic considerations (i.e., deviation incentives) rather than simplistic no-loss rules, and it includes the trade-off between maximizing market efficiency and minimizing compensation payments. As a consequence, our approach has significant potential to improve the current practice in power market operation. In particular, it allows to include the incentives of non-active players to enter the market; hence, the behavior of non-dispatched generators entering the market through self-commitment (or ``self-scheduling'') can be more effectively addressed.

As a numerical application, we solve a stylized power market uplift problem. We illustrate that the current practice in power market operation can lead to situations where players have profitable deviations. In particular, when considering the nature of the non-cooperative game in our stylized example, the market operator prefers to deviate from the welfare-optimal dispatch, because a slight reduction in market efficiency is traded for a strong decrease of compensation payments. To illustrate the flexibility of our approach, we also solve the model under a hypothetical market regulation stating that a)~no generator may lose money, and b)~only active generators may receive compensation. This yields a welfare loss of $3\,$\% relative to the optimal solution. At the same time, compensation payments are almost twice as high as in the currently used approach, so that compensation payments eat up almost a third of total welfare in the market. We take this as a warning that market rules may have rather counter-intuitive effects, even when they are implemented with the aim of preventing strategic behavior or mitigating other inefficiencies.

\begin{footnotesize}
\bibliographystyle{plainnat}
\bibliography{bibliography_dh}

\begin{thebibliography}{41}
\providecommand{\natexlab}[1]{#1}
\providecommand{\url}[1]{\texttt{#1}}
\expandafter\ifx\csname urlstyle\endcsname\relax
  \providecommand{\doi}[1]{doi: #1}\else
  \providecommand{\doi}{doi: \begingroup \urlstyle{rm}\Url}\fi

\bibitem[Audet et~al.(2006)Audet, Belha{\"\i}za, and
  Hansen]{audet2006enumeration}
Charles Audet, Slim Belha{\"\i}za, and Pierre Hansen.
\newblock Enumeration of all the extreme equilibria in game theory: Bimatrix
  and polymatrix games.
\newblock \emph{Journal of Optimization Theory and Applications}, 129\penalty0
  (3):\penalty0 349--372, 2006.

\bibitem[Avis et~al.(2010)Avis, Rosenberg, Savani, and
  Von~Stengel]{avis2010enumeration}
David Avis, Gabriel~D Rosenberg, Rahul Savani, and Bernhard Von~Stengel.
\newblock {Enumeration of Nash equilibria for two-player games}.
\newblock \emph{Economic theory}, 42\penalty0 (1):\penalty0 9--37, 2010.

\bibitem[Baumol and
  Bradford(1970)]{Baumol_Bradford_1970_departures_marginal_pricing}
William~J. Baumol and David~F. Bradford.
\newblock {Optimal departures from marginal cost pricing}.
\newblock \emph{The American Economic Review}, 60\penalty0 (3):\penalty0
  265--283, 1970.

\bibitem[Bj{\o}rndal and
  J{\"o}rnsten(2008)]{Bjorndal_2008_EJOR_non_convex_prices}
Mette Bj{\o}rndal and Kurt J{\"o}rnsten.
\newblock {Equilibrium prices supported by dual price functions in markets with
  non-convexities}.
\newblock \emph{European Journal of Operational Research}, 190\penalty0
  (3):\penalty0 768--789, 2008.

\bibitem[Caunhye et~al.(2012)Caunhye, Nie, and
  Pokharel]{Caunhye_2012_emergency_logistics}
Aakil~M. Caunhye, Xiaofeng Nie, and Shaligram Pokharel.
\newblock {Optimization models in emergency logistics: A literature review}.
\newblock \emph{Socio-Economic Planning Sciences}, 46\penalty0 (1):\penalty0
  4--13, 2012.

\bibitem[Facchinei and Pang(2003)]{Facchinei_2003_vi}
Francisco Facchinei and Jong-Shi Pang.
\newblock \emph{{Finite-Dimensional Variational Inequalities and
  Complementarity Problems}}, volume I \& II.
\newblock Springer, New York, 2003.

\bibitem[Ferris and Pang(1997)]{Ferris_Pang_1997_SIAM}
Michael~C. Ferris and Jong-Shi Pang.
\newblock {Engineering and economic applications of complementarity problems}.
\newblock \emph{SIAM Review}, 39\penalty0 (4):\penalty0 669--713, 1997.

\bibitem[Fortuny-Amat and McCarl(1981)]{Fortuny-Amat_1981}
Jos{\'e} Fortuny-Amat and Bruce McCarl.
\newblock {A representation and economic interpretation of a two-level
  programming problem}.
\newblock \emph{Journal of the Operational Research Society}, 32\penalty0
  (9):\penalty0 783--792, 1981.

\bibitem[Fuller and {\c C}elebi(2017)]{Fuller_2017}
J.~David Fuller and Emre {\c C}elebi.
\newblock {Alternative models for markets with nonconvexities}.
\newblock \emph{European Journal of Operational Research}, 261\penalty0
  (2):\penalty0 436--449, 2017.

\bibitem[Gabriel et~al.(2012)Gabriel, Siddiqui, Conejo, and
  Ruiz]{Gabriel_2012_NETS}
Steven~A. Gabriel, Sauleh~A. Siddiqui, Antonio~J. Conejo, and Carlos Ruiz.
\newblock {Solving discretely-constrained Nash--Cournot games with an
  application to power markets}.
\newblock \emph{Networks and Spatial Economics}, 13\penalty0 (3):\penalty0
  307--326, 2012.

\bibitem[Gabriel et~al.(2013)Gabriel, Conejo, Ruiz, and
  Siddiqui]{Gabriel_2013_DCMLCP}
Steven~A. Gabriel, Antonio~J. Conejo, Carlos Ruiz, and Sauleh~A. Siddiqui.
\newblock {Solving discretely constrained, mixed linear complementarity
  problems with applications in energy}.
\newblock \emph{Computers \& Operations Research}, 40\penalty0 (5):\penalty0
  1339--1350, 2013.

\bibitem[Glicksberg(1952)]{Glicksberg_1952}
Irving~L. Glicksberg.
\newblock {A further generalization of the Kakutani fixed point theorem, with
  application to Nash equilibrium points}.
\newblock \emph{Proceedings of the American Mathematical Society}, 3\penalty0
  (1):\penalty0 170--174, 1952.

\bibitem[Gribik et~al.(2007)Gribik, Hogan, and Pope]{Gribik_2007_price_uplift}
Paul~R. Gribik, William~W. Hogan, and Susan~L. Pope.
\newblock {Market-clearing electricty prices and energy uplift}.
\newblock Working Paper, John F.~Kennedy School of Government, Harvard
  University, 2007.

\bibitem[Guzelsoy and Ralphs(2007)]{Guzelsoy_2007_MILP_duality}
Menal Guzelsoy and Theodore~K. Ralphs.
\newblock {Duality for mixed-integer linear programs}.
\newblock \emph{International Journal of Operations Research}, 4\penalty0
  (3):\penalty0 118--137, 2007.

\bibitem[Harker(1991)]{Harker_1991}
Patrick~T. Harker.
\newblock {Generalized Nash games and quasi-variational inequalities}.
\newblock \emph{European Journal of Operational Research}, 54\penalty0
  (1):\penalty0 81--94, 1991.

\bibitem[Hobbs(2001)]{Hobbs_2001_Poolco}
Benjamin~F. Hobbs.
\newblock {Linear complementarity model of Nash-Cournot competition in
  bilateral and Poolco power markets}.
\newblock \emph{IEEE Transactions on Power Systems}, 16\penalty0 (2):\penalty0
  194--202, 2001.

\bibitem[Hogan and Ring(2003)]{Hogan_2003_minimum_uplift}
William~W. Hogan and Brendan~J. Ring.
\newblock {On minimum-uplift pricing for electricty markets}.
\newblock Working Paper, John F.~Kennedy School of Government, Harvard
  University, 2003.

\bibitem[Hotelling(1938)]{Hotelling_1938_taxation}
Harold Hotelling.
\newblock {The general welfare in relation to problems of taxation and of
  railway and utility rates}.
\newblock \emph{Econometrica}, 6\penalty0 (3):\penalty0 242--269, 1938.

\bibitem[Hu and Ralph(2007)]{Hu_Ralph_2007}
Xinmin Hu and Daniel Ralph.
\newblock {Using EPECs to model bilevel games in restructured electricity
  markets with locational prices}.
\newblock \emph{Operations Research}, 55\penalty0 (5):\penalty0 809--827, 2007.

\bibitem[Huppmann and Egerer(2015)]{Huppmann_2015_natstrat_investment}
Daniel Huppmann and Jonas Egerer.
\newblock {National-strategic investment in European transmission capacity}.
\newblock \emph{European Journal of Operational Research}, 247\penalty0
  (1):\penalty0 191--203, 2015.

\bibitem[Liu and Hobbs(2013)]{Liu_2013_tacit}
Andrew~L. Liu and Benjamin~F. Hobbs.
\newblock {Tacit collusion games in pool-based electricity markets under
  transmission constraints}.
\newblock \emph{Mathematical Programming}, 140\penalty0 (2):\penalty0 351--379,
  2013.

\bibitem[Liu and Ferris(2013)]{Liu_Ferris_2013_pricing_electricity}
Yanchao Liu and Michael~C. Ferris.
\newblock {Payment rules for unit commitment dispatch}.
\newblock \emph{The Electricity Journal}, 26\penalty0 (4):\penalty0 34--44,
  2013.

\bibitem[Murphy et~al.(1982)Murphy, Sherali, and
  Soyster]{Murphy_1982_math_programming}
Frederic~H. Murphy, Hanif~D. Sherali, and Allen~L. Soyster.
\newblock {A mathematical programming approach for determining oligopolistic
  market equilibrium}.
\newblock \emph{Mathematical Programming}, 24\penalty0 (1):\penalty0 92--106,
  1982.

\bibitem[Nabetani et~al.(2011)Nabetani, Tseng, and
  Fukushima]{Nabetani_2011_GNE}
Koichi Nabetani, Paul Tseng, and Masao Fukushima.
\newblock {Parametrized variational inequality approaches to generalized Nash
  equilibrium problems with shared constraints}.
\newblock \emph{Computational Optimization and Applications}, 48\penalty0
  (3):\penalty0 423--452, 2011.

\bibitem[Nishimura and
  Friedman(1981)]{Nishimura_Friedman_1981_Nash_quasiconcave}
Kazua Nishimura and James Friedman.
\newblock {Existence of Nash equilibrium in \mbox{n-person} games without
  quasi-concavity}.
\newblock \emph{International Economic Review}, 22\penalty0 (3):\penalty0
  637--648, 1981.

\bibitem[O'Neill et~al.(2005)O'Neill, Sotkiewicz, Hobbs, Rothkopf, and
  Stewart~Jr]{ONeill_2005_nonconvexities}
Richard~P. O'Neill, Paul~M. Sotkiewicz, Benjamin~F. Hobbs, Michael~H. Rothkopf,
  and William~R. Stewart~Jr.
\newblock {Efficient market-clearing prices in markets with nonconvexities}.
\newblock \emph{European Journal of Operational Research}, 164\penalty0
  (1):\penalty0 269--285, 2005.

\bibitem[O'Neill et~al.(2013)O'Neill, Krall, Hedman, and
  Oren]{ONeill_2013_power_systems_planning}
Richard~P. O'Neill, Eric~A. Krall, Kory~W. Hedman, and Shmuel~S. Oren.
\newblock {A model and approach to the challenge posed by optimal power systems
  planning}.
\newblock \emph{Mathematical Programming}, 140\penalty0 (2):\penalty0 239--266,
  2013.

\bibitem[Pandzic et~al.(Online)Pandzic, Dvorkin, Qiu, Wang, and
  Kirschen]{Pandzic_Online_REAL}
Hrvoje Pandzic, Yuri Dvorkin, Ting Qiu, Yishen Wang, and Daniel~S. Kirschen.
\newblock {Unit commitment under uncertainty -- GAMS models}.
\newblock Library of the Renewable Energy Analysis Lab (REAL), University of
  Washington, Seattle, USA, Online.

\bibitem[Pang and Scutari(2013)]{Pang_2013_quasi_Nash}
Jong-Shi Pang and Gesualdo Scutari.
\newblock {Joint sensing and power allocation in nonconvex cognitive radio
  games: Quasi-Nash equilibria}.
\newblock \emph{IEEE Transactions on Signal Processing}, 61\penalty0
  (9):\penalty0 2366--2382, 2013.

\bibitem[Philpott and Schultz(2006)]{Philpott_2006_unit_commitment}
Andrew~B. Philpott and R{\"u}diger Schultz.
\newblock {Unit commitment in electricity pool markets}.
\newblock \emph{Mathematical Programming}, 108\penalty0 (2-3):\penalty0
  313--337, 2006.

\bibitem[Philpott et~al.(2013)Philpott, Ferris, and
  Oren]{Philpott_2013_challenges}
Andrew~B. Philpott, Michael~C. Ferris, and Shmuel~S. Oren.
\newblock {Challenges and opportunities for optimization in electricity
  systems}.
\newblock \emph{Mathematical Programming}, 140\penalty0 (2):\penalty0 235--237,
  2013.

\bibitem[Rao(1996)]{Rao_1996_engineering}
Singiresu~S. Rao.
\newblock \emph{{Engineering Optimization: Theory and Practice}}.
\newblock Wiley, 1996.

\bibitem[Scarf(1990)]{Scarf_1990_mathprog_economics}
Herbert~E. Scarf.
\newblock {Mathematical programming and economic theory}.
\newblock \emph{Operations Research}, 38\penalty0 (3):\penalty0 377--385, 1990.

\bibitem[Schiro et~al.(2015)Schiro, Zheng, Zhao, and
  Litvinov]{Schiro_2015_convexhull}
Dane~A. Schiro, Tongxin Zheng, Feng Zhao, and Eugene Litvinov.
\newblock {Convex hull pricing in electricity markets: Formulation, analysis,
  and implementation challenges}.
\newblock \emph{IEEE Transactions on Power Systems}, forthcoming, 2015.

\bibitem[Siddiqui and Gabriel(2013)]{Siddiqui_2013_SOS1}
Sauleh~A. Siddiqui and Steven~A. Gabriel.
\newblock {An SOS1-Based approach for solving MPECs with a natural gas market
  application}.
\newblock \emph{Networks and Spatial Economics}, 13\penalty0 (2):\penalty0
  205--227, 2013.

\bibitem[Sioshansi(2014)]{Sioshansi_2014_central_commitment}
Ramteen Sioshansi.
\newblock {Pricing in centrally committed electricity markets}.
\newblock \emph{Utilities Policy}, 31:\penalty0 143--145, 2014.

\bibitem[Sioshansi et~al.(2010)Sioshansi, Oren, and
  O'Neill]{Sioshansi_2010_self-commitment}
Ramteen Sioshansi, Shmuel Oren, and Richard O'Neill.
\newblock {Three-part auctions versus self-commitment in day-ahead electricity
  markets}.
\newblock \emph{Utilities Policy}, 18\penalty0 (4):\penalty0 165--173, 2010.

\bibitem[Todd(2014)]{Todd_2014_equilibrium_in_integers}
Michael~J. Todd.
\newblock {Computing a Cournot equilibrium in integers}.
\newblock School of Operations Research and Information Engineering, Cornell
  University, 2014.

\bibitem[T{\'o}th et~al.(2011)T{\'o}th, Haight, and
  Rogers]{Toth_2011_agriculture}
S{\'a}ndor~F. T{\'o}th, Robert~G. Haight, and Luke~W. Rogers.
\newblock {Dynamic reserve selection: Optimal land retention with land-price
  feedbacks}.
\newblock \emph{Operations Research}, 59\penalty0 (5):\penalty0 1059--1078,
  2011.

\bibitem[Von~Stengel(2002)]{von2002computing}
Bernhard Von~Stengel.
\newblock Computing equilibria for two-person games.
\newblock \emph{Handbook of game theory with economic applications},
  3:\penalty0 1723--1759, 2002.

\bibitem[Wogrin et~al.(2013)Wogrin, Hobbs, Ralph, Centeno, and
  Barqu{\'\i}n]{Wogrin_2013_math_programming}
Sonja Wogrin, Benjamin~F. Hobbs, Daniel Ralph, Efraim Centeno, and Julian
  Barqu{\'\i}n.
\newblock {Open versus closed loop capacity equilibria in electricity markets
  under perfect and oligopolistic competition}.
\newblock \emph{Mathematical Programming}, 140\penalty0 (2):\penalty0 295--322,
  2013.

\end{thebibliography}

\appendix
\newpage
\section*{Appendix\\ GAMS codes, a numerical test case \& additional examples}
The GAMS code for the binary-equilibrium model presented in Chapter~\ref{sec:example} is published under a \emph{Creative Commons Attribution 4.0 International License} and is available for download at \url{https://github.com/danielhuppmann/binary_equilibrium}. The repository also includes an application of this method to a natural gas market investment and production game to illustrate how the binary equilibrium approach can be applied to more general cases where players interact in settings with multiple binary variables.

\subsection*{A numerical test case}
To illustrate the numerical properties of the binary-equilibrium method, we apply the power market model presented in Chapter~\ref{sec:example} to the open-source model and data set provided by \citeauthor{Pandzic_Online_REAL} at the Renewable Energy Analysis Lab (REAL), University of Washington, Seattle, USA\footnote{Available at \url{http://www.ee.washington.edu/research/real/gams_code.html}}). This open-source data and model were specifically adapted to provide an easily scalable test case, using demand and wind timeseries over multiple days and apply different settings to the dataset (differentiated by favourable or unfavourable wind situations, wind penetration, power plant cost characteristics, grid congestion, etc.).

The test cases were run on a Dell Optiplex 9020 workstation (Windows 7, 8-core 64bit Intel i7-4790 CPU@3.6GHz, 16GB RAM) using the GUROBI solver in GAMS 24.4.6, and setting the GUROBI options to 3 threads on 6 cores.

For the present purpose, we use 7 consecutive days, each day represented by 12 two-hour timesteps, 48 nodes (two out of three regions in the dataset), 64 power plants, and 79 lines (with transmission constraints). Then, we apply two different scenarios: first, a case with favourable wind conditions, in which not much congestion occurs in the network. In this case, the welfare-optimal solution (768 binary variables) was found within 0.5-2 seconds, except for \emph{day6}, where it took 92 seconds (optimality threshold 0.01\%). Using the respective starting point, GUROBI took between 102 and 150 seconds to determine that this was indeed the optimal outcome, and that no welfare improvement (considering the trade-off between maximum welfare and compensation payments) is possible (9,552 binary variables, at a 0.1\% optimality threshold). For details, see the parameter “report\_summary” in the results file \texttt{output/report\_MOPBQE\_region\_1\_2\_wind\_fav.gdx}.

As a second case, we set the switch in the original dataset to “unfavourable wind conditions”. As before, computation times are up to ten seconds for the social-welfare problem (\emph{WF-max}), and between 102 and 150 seconds for the binary equilibrium problem (\emph{Bin Eq}), except for Day 3, where the solver encountered numerical problems and did not find a solution. 

Over the entire week, congestion rents are much higher than in the “favourable wind case”, and the binary equilibrium method identified reductions of compensation payments of up to 35\%, at no discernible loss of aggregate welfare (i.e. before compensation payments). In Table~\ref{table:numerical_tests}, we show both the compensation required to guarantee no losses as well as the incentive-compatible compensation payments (“IC comp”), in line with the discussion in Section~\ref{sec:model:comparison} (Problem~\ref{twostage:problem}). The compensation payments are around 10\% of generators profits – they may seem small compared to the overall welfare, but this is because the consumer surplus is computed as the difference between the actual price and the load-shedding price bound. Congestion rents are much higher than in the case with favourable wind conditions, and the binary equilibrium method identified reductions of compensation payments of up to 35\%, at no discernible loss of aggregate welfare. For further details, please refer to \texttt{output/report\_MOPBQE\_region\_1\_2\_wind\_unfav.gdx} in the GitHub repository. 

Last, let us point out that there are almost 10,000~dispatch options for the 64~generators, already accounting for eliminating all dispatch schedules that are not possible due to minimum up- or downtime constraints (this is shown in the report “dispatch options” in the gdx report file). If one were to try brute-force enumeration of all permutations of options to determine stable Nash equilibria, this would require to solve $10^{128}$~equilibrium problems for these test instances. In all cases, we also tried to solve the binary-equilibrium model directly without using the starting point derived from the socially optimal outcome. In no case was a solution found within the specified time limit of one hour.

\begin{sidewaystable}
	\begin{small}
		\begin{center}
			\begin{tabular}{ll|rrrrr|rr|rr|r}
				&       & \multicolumn{1}{c}{Generator} & \multicolumn{1}{c}{Wind} & \multicolumn{1}{c}{Consumer} & \multicolumn{1}{c}{Congestion} & \multicolumn{1}{c|}{Gross} & \multicolumn{1}{c}{No-loss} & \multicolumn{1}{c|}{Net} & \multicolumn{1}{c}{IC} & \multicolumn{1}{c|}{Net} & \multicolumn{1}{c}{Time} \\
				&       & \multicolumn{1}{c}{profit} & \multicolumn{1}{c}{rent} & \multicolumn{1}{c}{surplus} & \multicolumn{1}{c}{rent} & \multicolumn{1}{c|}{welfare} & \multicolumn{1}{c}{comp} & \multicolumn{1}{c|}{welfare} & \multicolumn{1}{c}{comp} & \multicolumn{1}{c|}{welfare} & \multicolumn{1}{c}{(sec)} \\
				\hline \hline
				Day 1 & WF-max & 0.092 & 0.277 & 11.677 & 0.026 & 12.071 & 0.023 & 12.048 & 0.016 & 12.1 & 1.9 \\
				& Bin Eq & 0.111 & 0.207 & 11.615 & 0.139 & 12.071 &       &       & 0.011 & 12.061 & 146.9 \\
				&       &       &       &       &       &       &       &       & \emph{33.54\%} &       &  \\
				\hline
				Day 2 & WF-max & 0.102 & 0.295 & 12.520 & 0.029 & 12.946 & 0.021 & 12.925 & 0.014 & 12.931 & 1.0 \\
				& Bin Eq & 0.120 & 0.223 & 12.475 & 0.128 & 12.946 &       &       & 0.010 & 12.936 & 136.1 \\
				&       &       &       &       &       &       &       &       & \emph{32.64\%} &       &  \\
				\hline
				Day 3 & WF-max & 0.129 & 0.290 & 12.240 & 0.037 & 12.696 & 0.007 & 12.690 & 0.012 & 12.684 & 1.4 \\
				& Bin Eq &       &       &       &       &       &       &       &       &       &  \\
				&       &       &       &       &       &       &       &       &       &       &  \\
				\hline
				Day 4 & WF-max & 0.085 & 0.262 & 12.066 & 0.033 & 12.446 & 0.026 & 12.420 & 0.019 & 12.428 & 1.9 \\
				& Bin Eq & 0.099 & 0.131 & 12.022 & 0.195 & 12.446 &       &       & 0.014 & 12.432 & 115.2 \\
				&       &       &       &       &       &       &       &       & \emph{25.41\%} &       &  \\
				\hline
				Day 5 & WF-max & 0.079 & 0.248 & 11.838 & 0.031 & 12.196 & 0.026 & 12.170 & 0.017 & 12.179 & 1.9 \\
				& Bin Eq & 0.096 & 0.187 & 11.764 & 0.150 & 12.196 &       &       & 0.014 & 12.183 & 102.4 \\
				&       &       &       &       &       &       &       &       & \emph{20.93\%} &       &  \\
				\hline
				Day 6 & WF-max & 0.054 & 0.187 & 9.873 & 0.020 & 10.133 & 0.023 & 10.110 & 0.014 & 10.119 & 9.7 \\
				& Bin Eq & 0.076 & 0.300 & 9.750 & 0.008 & 10.133 &       &       & 0.014 & 10.119 & 115.2 \\
				&       &       &       &       &       &       &       &       & \emph{3.47\%} &       &  \\
				\hline
				Day 7 & WF-max & 0.048 & 0.179 & 9.631 & 0.020 & 9.878 & 0.024 & 9.854 & 0.015 & 9.863 & 7.9 \\
				& Bin Eq & 0.066 & 0.280 & 9.527 & 0.006 & 9.878 &       &       & 0.015 & 9.863 & 102.3 \\
				&       &       &       &       &       &       &       &       & \emph{0.68\%} &       &
			\end{tabular}%
		\end{center}
	\end{small}
	\begin{center}
		\caption{Summary of numerical tests under unfavorable wind conditions (welfare and rents in m\$, computation time in seconds) \\ relative reduction of payments required to guarantee incentive compatibility given in italics} \label{table:numerical_tests}
	\end{center}
\end{sidewaystable}

\end{footnotesize}
\end{document}